\documentclass[11pt,letterpaper]{amsart}
\usepackage{graphicx} 
\usepackage[utf8]{inputenc}
\usepackage{amsmath}
\usepackage{amsfonts}
\usepackage[x11names, svgnames, rgb]{xcolor}
\usepackage{multicol}
\usepackage{url}
\usepackage{amsthm}
\usepackage{hyperref}
\usepackage[margin=2cm]{geometry}
 \usepackage{amssymb}
 \usepackage{yhmath}
\input xy
\xyoption{all}
\usepackage{comment}
\usepackage{tikz}
\usetikzlibrary{snakes,arrows,shapes}

\newenvironment{blue}{\relax\color{blue}}{\hspace*{.5ex}\relax}
\newcommand{\bcp}{\begin{blue} }
\newcommand{\ecp}{\end{blue}}

\newtheorem{theorem}{Theorem}[section]
\newtheorem{proposition}[theorem]{Proposition}
\newtheorem{definition}[theorem]{Definition}
\newtheorem{exmp}{Example}[section]
\newtheorem{lemma}[theorem]{Lemma}
\newtheorem{coro}[theorem]{Corollary}
\newtheorem{conj}[theorem]{Conjecture}

\newcommand{\TT}{\mathcal{T}}

\newcommand{\DD}{\mathcal{D}}

\newcommand{\PP}{\mathcal{P}}
\newcommand{\Aut}{\mathrm{Aut}}
\newcommand{\Cor}{\mathrm{Cor}}

\newcommand{\Sym}{\mathrm{Sym}} 
 
\newcommand{\ZZ}{\mathbb{Z}}
\newcommand{\FF}{\mathbb{F}}

\newcommand{\UU}{\mathcal{U}}

\title{The Geometry of Pattern Groups}

\begin{document}

\author[João Dias]{João Dias}
\address[João Dias]{Universidade de Évora, Escola de Ciências e Tecnologia, Departamento de Matemática, Centro de Investigacão em Matemática e Aplicacões
(CIMA), Orcid number 0000-0001-9400-7323
}
\email{joao.miguel.dias@uevora.pt}

\author[Claudio Alexandre Piedade]{Claudio Alexandre Piedade}
\address[Claudio Alexandre Piedade]{Université Libre de Bruxelles, Brussels, Belgium, Orcid number 0000-0002-0746-5893
}
\email{claudio.piedade@ulb.be}

\subjclass{51E30, 51E24, 20F18, 20G40}{}
\keywords{ Incidence Geometry, Coset Incidence Systems, Unitriangular groups, Pattern groups}

\date{\today}

\begin{abstract}
Coset incidence geometries are an important tool which connect group theory and geometry. While the representations of the unitriangular group and its pattern subgroups have been studied extensively, the underlying geometric structures of these groups has remained largely unexplored.

In this article, we construct the natural coset geometry associated with each pattern group over a finite field $\FF_q$, and prove geometrical properties of theses structures. We show how combinatorial features of their defining closed sets directly correspond to properties of their parabolic subgroups, in particular the intersection of parabolics and normality of the Borel subgroup. Finally, we characterize the automorphisms of these geometries and examine how pattern geometries behave under change of field within the same characteristic $p$.
\end{abstract}

\maketitle

\section{Introduction}

Coset incidence systems are incidence systems that are constructed from a group $G$ together with a system $(G_i)_{i\in I}$ of subgroups of $G$ (see Section \ref{sec:back:subsec:incidence} for precise definitions).
These abstract structures were introduced by Jacques Tits in the 60s~\cite{Tits1957,Tits1963geometries} and are the basis for a diverse set of combinatorial and geometric objects, from abstract polytopes, vector and projective spaces, and even Tits' buildings~\cite{Handbook,Tits1974,ARP}. In particular, the theory of buildings is a crucial area which 
illustrates how combinatorial structures can emerge from algebraic objects~\cite{Handbook,KenBrownBook2}. Additionally, abstract regular polytopes (as introduced in~\cite{ARP}) are a geometrical object, built from coset incidence systems, which are in one-to-one correspondence with a particular class of groups known as string $C$-groups.

The versatility of coset incidence systems lies on the freedom of taking any group and any set of its subgroups, and obtaining a combinatorial structure. Recently, there has been active research on both characterising coset geometries for different groups~\cite{Tranchida2025,Remy2026, Fernandes2026, Leemans2025},
and the
development of techniques to study and construct new coset geometries~\cite{Zhang2026,Piedade2025, Piedade2026,wreathproductPiedadeTranchida}.
Because groups and their geometric structures are closely related, studying one reveals key information about the other, in particular whenever the group has a lot of combinatorial properties.

In recent years, the character theory of unitriangular groups over finite fields has received considerable attention, in particular the study of their complex irreducible characters. This interest can be traced back to a conjecture of G. Higman \cite{higman1960enumerating}, concerning the character theory of unitriangular groups. A significant development in this direction was made by C. André, who introduced a family of characters known as basic characters in \cite{andre1995basic}. André's construction makes extensive use of the combinatorial structure underlying the unitriangular groups. Subsequently, Diaconis and Isaacs \cite{diaconis2008supercharacters} developed the general theory of supercharacters, providing a broader framework in which André's basic characters arise naturally.

Another important class of subgroups of the unitriangular groups is given by the pattern subgroups. These subgroups were introduced by A. J. Weir \cite{weir1955sylow} under the name partition subgroups, and were later referred to as pattern subgroups by I. M. Isaacs \cite{isaacs2007counting}. Pattern subgroups are defined combinatorially from certain closed sets of positions in the upper-triangular part of a matrix. More precisely, a closed set satisfies a suitable closure condition with respect to the natural partial order on these positions, and each such closed set determines a subgroup of the unitriangular group. This correspondence between combinatorial data and subgroups provides a useful framework for studying their structure and representation theory.
Due to their close relationship with the ambient unitriangular groups and their easy combinatorial description, pattern subgroups have become an important object of study.

The work presented here started from a question of Persi Diaconis, at the Theoretical and Computational Algebra Conference 2025 (Évora, Portugal), posed to one of the presenters, about the maximum size of independent generating set of a matrix group. A generating set of a group being independent is directly linked with properties such as residual-connectedness of the natural coset incidence system one can build for a group using that generating set. This question generated some discussions between the two authors of this paper, one with more expertise in matrix groups such as the unitriangular group and its subgroups, the other on coset geometries. Our research started with studying only the properties of the coset geometry of the unitriangular group, which later was generalised to the study of all pattern groups.

In this paper, we prove that the natural coset incidence system built using a pattern group is a residually connected geometry on which the group acts flag-transitively. Additionally, we prove under which conditions its parabolic subgroups satisfy the intersection property (a stronger version independence of generators, see Section~\ref{sec:back:IP}). 
 Indeed, one can relate the intersection property and the normality of the Borel subgroup with combinatorial properties of the partial ordered set used to define the pattern group.

These results extend naturally to the coset geometries of the unitriangular groups, where we describe the Buekenhout diagram of these structures. Interestingly enough, we prove the existence of an infinite family of polytopes of type $\{4,\ldots,4\}$ not described so far in the literature.
Finally, we investigate the automorphisms of the unitriangular geometry and the relations between pattern groups over a field of the same characteristic $p$.

\subsection{Acknowledgments}
We would like to thank Marston Conder and Dong-Dong Hou for their input regarding the description in literature of the polytope of type $\{4,\ldots,4\}$ described in this paper.
Additionally, we would like to thank Dimitri Leemans and Philippe Tranchida for useful discussions regarding the automorphism group of coset geometries.

The first author acknowledges the support of FCT - Funda\c{c}\~ao para a Ci\^{e}ncia e Tecnologia under the project 10.54499/UIDB/04674/2020, and the research center CIMA -- Centro de Investigação em Matemática e Aplicações.
The second author is funded by an Action de Recherche Concert\'{e}e -- ARC -- from the Communaut\'{e}e Fran\c{c}aise Wallonie-Bruxelles and the second author is a Postdoctoral Researcher of the Fonds de la Recherche Scientifique -- FNRS.

\section{Background and Preliminary Results}
\subsection{Unitriangular Group and Pattern Groups}\label{sec:UTG_PG}
Let us denote by $UT(n,\FF_q)$ the group of upper unitriangular matrices of size $n$ over the field $\mathbb{F}_q$, where $q$ is a power of prime.
The unitriangular group is a $p$-group, that is, the order of $UT(n,\FF_q)$ is a power of a prime. In fact, this group has order $q^{\frac{n^2-n}{2}}$. 

Certain elements of the unitriangular group can be described in the following way:  given a $t\in \FF_q$ and $(i,j)\in\Delta_n=\{(a,b)\in\mathbb{N}^2\mid 1\leq a<b\leq n\}$, we define the element $e_{i,j}(t)$ as the unitriangular matrix such that:
$$(e_{i,j}(t))_{k,l}=\begin{cases}
    1,\quad k=l\\
    t,\quad k=i,\;l=j\\
    0,\quad \text{otherwise}
\end{cases}$$
Note that, for any $(i,j)\in\Delta_n$, $e_{i,j}(0) = \mathbf{1}$, the identity matrix of $UT(n,\FF_q)$.
Specifically, the matrices $e_{i,j}(t)$ satisfy certain relations, which are described below in Proposition~\ref{prop:gens_props}

\begin{proposition}\label{prop:gens_props}
    Let $(i,j)$ and $(k,l)\in\Delta_n$, and $t,t'\in \FF_q\setminus\{0\}$ then:
    \begin{enumerate}
        \item $e_{i,j}(t)e_{k,l}(t')=e_{k,l}(t')e_{i,j}(t)$ if and only if $i\neq l$ and $j\neq k$;
        \item $e_{i,j}(t)e_{i,j}(t')=e_{i,j}(t')e_{i,j}(t)=e_{i,j}(t+t')$;

        \item let $(i,j),(j,k),(i,k)\in \Delta_n$, then $e_{i,k}(tt')=e_{i,j}(t)e_{j,k}(t')e_{i,j}(-t)e_{j,k}(-t')=[e_{i,j}(t),e_{j,k}(t')]$;
        \item $e_{i,j}^{-1}(t) = e_{i,j}(-t)$
        \item $o(e_{i,j}(t)) = p$;
    \end{enumerate}
\end{proposition}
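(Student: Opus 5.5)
The plan is to write every element in matrix-unit form, $e_{i,j}(t)=\mathbf{1}+tE_{i,j}$, where $E_{i,j}$ is the standard matrix unit. Everything then follows from the rule $E_{i,j}E_{k,l}=\delta_{j,k}E_{i,l}$, with $\delta$ the Kronecker delta, together with the fact that $E_{i,j}^2=0$ because $i<j$. The one computation we use repeatedly is
$$e_{i,j}(t)e_{k,l}(t')=\mathbf{1}+tE_{i,j}+t'E_{k,l}+tt'\delta_{j,k}E_{i,l}.$$

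\textbf{Items (2) and (4).} Take $(k,l)=(i,j)$ in the displayed formula. Since $i<j$ we have $\delta_{j,i}=0$, so the product is $\mathbf{1}+(t+t')E_{i,j}=e_{i,j}(t+t')$. This expression is symmetric in $t$ and $t'$, which gives (2). Setting $t'=-t$ gives $e_{i,j}(0)=\mathbf{1}$, which is (4).

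\textbf{Item (5).} Iterating (2) gives $e_{i,j}(t)^m=e_{i,j}(mt)$. Since $\FF_q$ has characteristic $p$ and $t\neq 0$, we have $mt=0$ exactly when $p\mid m$. So the order is exactly $p$.

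\textbf{Item (1).} Subtracting the displayed formula from its version with the factors swapped gives
$$e_{i,j}(t)e_{k,l}(t')-e_{k,l}(t')e_{i,j}(t)=tt'\bigl(\delta_{j,k}E_{i,l}-\delta_{l,i}E_{k,j}\bigr).$$
If $j\neq k$ and $i\neq l$, the right side is zero and the elements commute. For the converse, suppose $j=k$. Then $i<j<l$, so $l\neq i$ and the second term vanishes. The difference is $tt'E_{i,l}\neq 0$, using $tt'\neq 0$ and the fact that $(i,l)$ is a genuine position. The case $l=i$ is symmetric.

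This is the only step needing care: we must check that the two delta terms cannot both be nonzero and cancel. That would require $j=k$ and $l=i$ simultaneously, i.e. $i<j=k<l=i$, which is impossible.

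\textbf{Item (3).} We expand the commutator using (4) and the displayed formula. The displayed formula gives
$$e_{i,j}(t)e_{j,k}(t')=\mathbf{1}+tE_{i,j}+t'E_{j,k}+tt'E_{i,k},$$
and similarly
$$e_{i,j}(-t)e_{j,k}(-t')=\mathbf{1}-tE_{i,j}-t'E_{j,k}+tt'E_{i,k}.$$
We multiply these two matrices, using the following products of matrix units:
\begin{itemize}
\item $E_{i,j}E_{j,k}=E_{i,k}$;
\item $E_{j,k}E_{i,j}=0$, since $k\neq i$;
\item $E_{i,k}E_{i,j}=0$, $E_{i,k}E_{j,k}=0$, $E_{i,j}E_{i,k}=0$ and $E_{j,k}E_{i,k}=0$, by the index inequalities $i<j<k$;
\item all squares vanish.
\end{itemize}
The product then collapses to
$$\mathbf{1}+(tt'+tt'-tt')E_{i,k}=e_{i,k}(tt').$$
The identification with the commutator $[e_{i,j}(t),e_{j,k}(t')]$ is just the convention $[a,b]=aba^{-1}b^{-1}$, combined with (4).
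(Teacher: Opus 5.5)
Your proof is correct. It uses the same device as the paper: you write $e_{i,j}(t)=\mathbf{1}+tE_{i,j}$ (the paper's $\mathbf{1}+f_{i,j}(t)$) and use $f_{i,j}(\alpha)f_{k,l}(\beta)=\delta_{j,k}f_{i,l}(\alpha\beta)$, and your arguments for (4) and (5) match the paper's.

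The one difference is that the paper does not prove (1)--(3); it cites them as the standard Steinberg relations. Your direct matrix-unit computations for those items make the proposition self-contained, including the check that the two $\delta$-terms in (1) can never cancel.
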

\begin{proof}
    Items $(1)$, $(2)$ and $(3)$ is common knowledge regarding upper unitriangular groups (see~\cite{biss2001presentation, Mahalanobis2013}) which can be easily derived. Indeed these relations are the Steinberg relations of unipotent groups.
    The relations $(4)$ can be derived from $(2)$ be taking $t'=-t$.

    For the remaining proof, let $f_{i,j}(t)$ be the matrix with zero everywhere, except the entry $(i,j)$, which has entry $t\in \FF_q\setminus\{0\}$. In other words, $f_{i,j}(t) = e_{i,j}(t)-\mathbf{1}$.
    Since $e_{i,j}(t)=\mathbf{1}+f_{i,j}(t)$, one easily can compute that $(e_{i,j}(t))^n=\mathbf{1}+nf_{i,j}(t)$, which implies that the order of $e_{i,j}$ must be $p$.
\end{proof}
Let $T = \{t_1,\dots,t_m\}$ be a basis for $\FF_q$ seen as a $\FF_p$-vector space, with $q=p^m$ for a prime $p$ and $m>1$.
The unitriangular group has the following set of classical generators 
$$
S_n(q)=\{e_{i,i+1}(t)\mid i\in \{1,\dots, n-1\}, t\in T\}.
$$
The set $S_n(q)$ is known to be a minimal generating set for $UT(n,\FF_q)$ (see Proposition~\ref{prop:Pattern_mingenset}).

Due to Proposition~\ref{prop:gens_props}(1), note that whenever $|i-j|>1$, we have that $e_{i,i+1}(t)$ and $e_{j,j+1}(t)$ will commute. The following proposition provides the order of the product of consecutive generators.

\begin{proposition}\label{prop:orderproductgens}
    Let $1 \leq i \leq n-1$ and $\alpha,\beta\in \FF_q\setminus\{0\}$,
    then
    \begin{enumerate}
        \item If $p=2$, then $o(e_{i,i+1}(\alpha)e_{i+1,i+2}(\beta))=o(e_{i+1,i+2}(\beta)e_{i,i+1}(\alpha))=4$ 
        \item If $p\neq2$, then $o(e_i(\alpha)e_{i+1}(\beta))=o(e_{i+1}(\beta)e_i(\alpha))=p$.
    \end{enumerate}
\end{proposition}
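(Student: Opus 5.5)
The plan is to reduce everything to a computation inside a copy of $UT(3,\FF_q)$. Set $x=e_{i,i+1}(\alpha)$, $y=e_{i+1,i+2}(\beta)$ and write $g=xy=\mathbf{1}+N$. Here $N=f_{i,i+1}(\alpha)+f_{i+1,i+2}(\beta)+f_{i,i+2}(\alpha\beta)$, with the notation $f_{k,l}$ from the proof of Proposition~\ref{prop:gens_props}. This follows directly by multiplying the two elementary matrices, since $f_{i,i+1}(\alpha)f_{i+1,i+2}(\beta)=f_{i,i+2}(\alpha\beta)$. The matrix $N$ is supported on the three positions $(i,i+1),(i+1,i+2),(i,i+2)$. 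Hence $N^2=f_{i,i+2}(\alpha\beta)$, which is nonzero because $\alpha,\beta\neq0$, and $N^3=0$. Similarly, $yx=\mathbf{1}+N'$ with $N'=f_{i,i+1}(\alpha)+f_{i+1,i+2}(\beta)$, and again $N'^2=f_{i,i+2}(\alpha\beta)\neq 0$ and $N'^3=0$.

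Since $\mathbf{1}$ commutes with $N$, the binomial theorem gives
$$g^k=\mathbf{1}+kN+\binom{k}{2}N^2 ,$$
and the same formula holds for $N'$. I then read off the $(i,i+1)$ and $(i,i+2)$ entries of $g^k$. The $(i,i+1)$ entry is $k\alpha$, so $g^k=\mathbf{1}$ forces $p\mid k$.

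Case $p\neq 2$. For $k=p$, the number $\binom{p}{2}=p(p-1)/2$ is divisible by $p$ because $2$ is invertible mod $p$. So $g^p=\mathbf{1}$. Combined with $p\mid k$, the order is exactly $p$.

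Case $p=2$. For $k=2$ we get $g^2=\mathbf{1}+N^2=e_{i,i+2}(\alpha\beta)\neq\mathbf{1}$, using $2N=0$ and $N^2\neq 0$. For $k=4$ we get $\binom{4}{2}=6\equiv0$, so $g^4=\mathbf{1}$. Since the order is a power of $2$ (the group is a $2$-group) and $g\ne \mathbf 1$, $g^2\neq\mathbf{1}$, the order is $4$. Alternatively, $g^2$ is an elementary matrix of order $2$ by Proposition~\ref{prop:gens_props}(5).

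The statement for $yx$ needs no separate work, since $yx=x^{-1}(xy)x$ is conjugate to $xy$ and so has the same order. The direct computation with $N'$ above also confirms this.

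The only delicate point is the $p=2$ case, where the quadratic term $\binom{k}{2}N^2$ does not vanish at $k=p$. This requires checking that $N^2\neq 0$, which uses $\alpha\beta\neq0$ and the requirement $i+2\le n$ (implicit in the statement). Item (2) writes $e_i,e_{i+1}$, which I read as the same generators $e_{i,i+1},e_{i+1,i+2}$.
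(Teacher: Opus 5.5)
Your proof is correct and follows essentially the same route as the paper: write $e_{i,i+1}(\alpha)e_{i+1,i+2}(\beta)=\mathbf{1}+N$, with $N$ supported on the positions $(i,i+1),(i+1,i+2),(i,i+2)$, and compute its powers directly. Your formula $g^k=\mathbf{1}+kN+\binom{k}{2}N^2$ is cleaner than the paper's displayed $p\neq 2$ expansion, which has a small index typo, and you also make explicit two points the paper leaves implicit: why the order is exactly $p$ (resp.\ $4$), and why $yx$ has the same order as $xy$ (by conjugation).
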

\begin{proof}
    In this proof, we will use the fact that $e_{i,j}(\alpha) = \mathbf{1} + f_{i,j}(\alpha)$. Note that $f_{i,j}(\alpha) f_{k,l}(\beta) = \delta_{j,k}f_{i,l}(\alpha\beta)$, where $\delta_{j,k}$ is the Kronecker delta.
    Let us consider first the case of $p=2$. Then, we have
    \begin{align*}
        e_{i,i+1}(\alpha)e_{i+1,i+2}(\beta)=(\mathbf{1}+f_{i,i+1}(\alpha))(\mathbf{1}+f_{i+1,i+2}(\beta))=\mathbf{1}+f_{i,i+1}(\alpha)+f_{i+1,i+2}(\beta)+f_{i,i+2}(\alpha\beta)
    \end{align*}
    By squaring we obtain
    \begin{align*}
        (e_{i,i+1}(\alpha)e_{i+1,i+2}(\beta))^2&=(\mathbf{1}+f_{i,i+1}(\alpha)+f_{i+1,i+2}(\beta)+f_{i,i+2}(\alpha\beta))^2\\&=\mathbf{1}+2(f_{i,i+1}(\alpha)+f_{i+1,i+2}(\beta)+f_{i,i+2}(\alpha\beta)+f_{i,i+2}(\alpha\beta)\\
        &=\mathbf{1}+f_{i,i+2}(\alpha\beta)
    \end{align*}
    And so we have
    \begin{align*}
    (e_{i,i+1}(\alpha)e_{i+1,i+2}(\beta))^4&= (\mathbf{1}+f_{i,i+2}(\alpha\beta))^2=\mathbf{1}
    \end{align*}
    And so in the case of $p=2$, the order of consecutive matrices $e_{i,i+1}(\alpha)$ and $e_{i+1,i+2}(\beta)$ is $4$.

    Consider now $p\neq 2$. Using similar arguments as before, we can obtain that
    \begin{align*}
        (e_{i,i+1}(\alpha)e_{i+1,i+2}(\beta))^n&=\mathbf{1}+n(f_{i,i+1}(\alpha)+f_{i+1,i+2}(\beta))+\frac{n(n+1)}{2}f_{i+1,i+2}(\alpha\beta)
    \end{align*}
    Then $(e_{i,i+1}(\alpha)e_{i+1,i+2}(\beta))^p=\mathbf{1}$, and so the order of consecutive matrices $e_{i,i+1}(\alpha)$ and $e_{i+1,i+2}(\beta)$ must be $p$. 
\end{proof}

One of the most important subgroups of the unitriangular group are called \emph{Pattern Groups}. Pattern groups sit in the intersection between representation theory and combinatorics~\cite{Dipper2017,Diaconis2009}. They play a very important role in the study of the character theory of the unitriangular group. To define these groups, first we need the notion of a (transitive) closed set. 
\begin{definition}\label{def:closure}
    Let $n\geq 1$ and $I\subseteq \Delta_n$. We say that $I$ is a \emph{transitive closed set}, or just \emph{closed set}, if $(i,j)\text{ and }(j,k)\in I$ implies $(i,k)\in I$.
\end{definition}

 For any set $I\subseteq  \Delta_n$, we shall denote by $\widehat{I}$ the transitive closure of $I$, so that $\widehat{I}$ is a closed set.
 Hence, from now on, any closed set will be always denoted with a ``hat", $\ \widehat{ }\ $, above it.

\begin{definition}[Pattern group]
    Let $n>1$ and $\widehat{I}\subseteq \Delta_n$ be a closed set. Then, the \emph{pattern group} associated to $\widehat{I}$ is the group:
    $$UT_{\widehat{I}}(n,\FF_q)=\{g\in UT(n,\FF_q)\mid g_{i,j}=0\text{ if }(i,j)\notin \widehat{I}\}$$
\end{definition}

The order of $UT_{\widehat{I}}(n,\FF_q)$ is $q^{|\widehat{I}|}$. For more details about Pattern groups, see~\cite{Diaconis2009}.
One unique property of pattern groups is that we can uniquely write a word to define each element. 
Indeed, as noted in~\cite{Dipper2017}, by fixing an arbitrary linear ordering $O_{\widehat{I}}$ and taking products following this ordering, one can uniquely write all elements of $UT_{\widehat{I}}(n,\FF_q)$. In other words, for any $g\in UT_{\widehat{I}}(n,\FF_q)$ and a linear ordering $O_{\widehat{I}}$,
$$ g = \prod_{(i,j)\in O_{\widehat{I}}}e_{i,j}(\alpha_{(i,j)})
\textnormal{, for some } \alpha_{(i,j)}\in \FF_q.$$
Additionally, due to the properties of Proposition~\ref{prop:gens_props}, this means that, for $g\in UT_{\widehat{I}}(n,\FF_q)$, the entry $g_{i,j} = 0$, for all $(i,j)\in \Delta_n\setminus  \widehat{I}$.

Given a closed set $\widehat{I}\subseteq\Delta_n$, one can define a partial ordered set $\PP_{\widehat{I}}:=(X,\prec)$, with ground set $X=\{1,\ldots,n\}$ and partial order given by $ \widehat{I}$.
For a finite partial ordered set $\PP_{\widehat{I}}$, one can take a particular subset of its partial order $\widehat{I}$, called the \emph{covering relations}, the set of relations $(i,j)$ such that there exists no $k\in\{1,\ldots,n\}$ such that $i\prec k\prec j$. For each closet set $\widehat{I}$, we will denote from now on the set of its covering relations as $I$.
With the set $I$, we can define a chain in $\PP_{\widehat{I}}$.

\begin{definition}
    Let $\widehat{I}$ be a closed subset of $\Delta_n$, with covering relations $I$.
    An ordered sequence $c=(c_1,c_2,\ldots,c_l)$, with each element $c_k\in\{1,\ldots,n\}$, $k\in\{1,\ldots,l\}$, is called a \emph{chain} in $\PP_{\widehat{I}}$ if, for $k\in\{1,\ldots,l-1\}$, each consecutive pair $(c_k,c_{k+1})$ in $c$ is in the set of covering relations $I$.
    Moreover, we say a chain is a \emph{$(i,j)$-chain} is $c_1=i$ and $c_l=j$.
    Finally, we define $C_{(i,j)}$ to be the set of all $(i,j)$-chain in $\widehat{I}$.
\end{definition} 
By definition of $\widehat{I}$ as a subset of $\Delta_n$, note that any $(i,j)$-chain is only defined if $i \prec j$ in $\PP_{\widehat{I}}$.

\begin{exmp}
    For $n=5$, consider 
    $\widehat{I}=\{(1,2),(1,3),(1,4),(1,5),(2,3),(2,4),(2,5),(3,5),(4,5)\}$. We can easily compute that the covering relations are $I=\{(1,2),(2,3),(2,4),(3,5),(4,5)\}$.
    We have that $(1,2,3,5)$ is a $(1,5)$-chain, while $(1,2,3,4,5)$ is not a chain, since $(3,4)$ is not in the set of covering relations.
    Finally, note that $C_{(1,5)}=\{(1,2,3,5),(1,2,4,5)\}$, i.e. these are the two possible distinct $(1,5)$-chains in $\widehat{I}$.
\end{exmp}
It is important to notice that that an element $(i,j)$ belongs to $\widehat{I}$ if an only if there exists an $(i,j)$-chain in $\PP_{\widehat{I}}$. Due to the property in Proposition~\ref{prop:gens_props}(3), the existence of an $(i,j)$-chain implies that one can write the matrix $e_{i,j}(t)$ from the covering relations $I$.
Taking this into account, it is direct that a
minimal generating set of a pattern group takes only entries in the covering relations.

\begin{proposition}\cite[Example 2]{Diaconis2009}
\label{prop:Pattern_mingenset}
    Let $\widehat{I}$ be a closed subset and consider $UT_{\widehat{I}}(n,\FF_q)$ the pattern group associated with $\widehat{I}$, and let $I$ be the set of covering relations. 
    Then $\{e_{i,j}(t)\mid (i,j)\in I, t\in T\}$ is a minimal generating set for $UT_{\widehat{I}}(n,\FF_q)$. 
\end{proposition}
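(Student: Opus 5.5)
We prove the two halves of the statement separately: first that the set $S_I=\{e_{i,j}(t)\mid (i,j)\in I,\ t\in T\}$ generates $UT_{\widehat{I}}(n,\FF_q)$, and then that no proper subset of it does.

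\textbf{Generation.} Let $H=\langle S_I\rangle$. Since $T$ is an $\FF_p$-basis of $\FF_q$, Proposition~\ref{prop:gens_props}(2) and (5) give $e_{i,j}(t)\in H$ for every $(i,j)\in I$ and every $t\in\FF_q$. Indeed, any $t$ can be written as $t=\sum_k a_kt_k$ with $a_k\in\FF_p$, and then $e_{i,j}(t)=\prod_k e_{i,j}(t_k)^{a_k}$.

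Next we show $e_{i,k}(s)\in H$ for all $(i,k)\in\widehat{I}$ and $s\in\FF_q$, by induction on the length of a shortest $(i,k)$-chain. If $(i,k)\notin I$, pick a chain through some $j$ with $i\prec j\prec k$. Both $(i,j)$ and $(j,k)$ lie in $\widehat{I}$ and have strictly shorter chains. Proposition~\ref{prop:gens_props}(3) with $t=s$, $t'=1$ then gives $e_{i,k}(s)=[e_{i,j}(s),e_{j,k}(1)]\in H$.

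Finally, fix a linear ordering $O_{\widehat{I}}$. The unique factorisation $g=\prod_{(i,j)\in O_{\widehat{I}}}e_{i,j}(\alpha_{(i,j)})$ quoted from~\cite{Dipper2017} shows that every $g\in UT_{\widehat{I}}(n,\FF_q)$ lies in $H$.

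\textbf{Minimality.} This is the main step. The plan is to pass to the abelianisation, or more precisely to the Frattini quotient. Consider the map
$$\varphi: UT_{\widehat{I}}(n,\FF_q)\to \bigoplus_{(i,j)\in I}\FF_q,\qquad g\mapsto (g_{i,j})_{(i,j)\in I}.$$
We claim $\varphi$ is a homomorphism. The $(i,j)$-entry of a product $gh$ is
$$(gh)_{i,j}=g_{i,j}+h_{i,j}+\sum_{i\prec k\prec j} g_{i,k}h_{k,j}.$$
Entries outside $\widehat{I}$ vanish, so the sum only involves $k$ with $(i,k),(k,j)\in\widehat{I}$. For a covering relation $(i,j)\in I$ no such $k$ exists, so the sum is empty and $(gh)_{i,j}=g_{i,j}+h_{i,j}$. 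This is exactly where the covering hypothesis is used.

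The map $\varphi$ is surjective, since $\varphi(e_{i,j}(t))$ is $t$ placed in coordinate $(i,j)$. Its target is an elementary abelian $p$-group, i.e.\ an $\FF_p$-vector space of dimension $m|I|$. The images $\varphi(S_I)$ are precisely the standard basis vectors $t\cdot\epsilon_{(i,j)}$ with $t\in T$.

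Now suppose a proper subset $S'\subsetneq S_I$ generated the group. Then $\varphi(S')$ would span the whole target over $\FF_p$. But $\varphi(S')$ misses some vector of a basis, so it spans a proper subspace, a contradiction. Hence $S_I$ is a minimal generating set. In fact this argument shows more: its size $m|I|$ equals the rank of the Frattini quotient, so every minimal generating set has this size, by the Burnside basis theorem.
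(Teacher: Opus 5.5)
Your proof is correct. The generation half is the argument the paper sketches just before the statement: each $(i,k)\in\widehat{I}$ is reached by a chain of covering relations, and Proposition~\ref{prop:gens_props}(3) assembles $e_{i,k}(s)$ from shorter pieces.

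One small repair is needed there. Your induction on the length of a shortest chain works only if you choose $j$ on a \emph{shortest} $(i,k)$-chain. An intermediate vertex of a longer chain can give a pair $(i,j)$ whose shortest chain is not shorter. Alternatively, induct on $k-i$, which works for every $j$ with $i\prec j\prec k$.

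For minimality the paper gives no argument of its own and simply defers to \cite{Diaconis2009}, so here your route is genuinely different. The key observation is that $(gh)_{i,j}=g_{i,j}+h_{i,j}$ whenever $(i,j)\in I$. This makes $\varphi$ a surjective homomorphism onto an $\FF_p$-space of dimension $m|I|$, with $S_I$ mapping bijectively onto a basis. Irredundancy follows at once, as does the fact that every generating set has at least $m|I|$ elements.

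Your closing Burnside remark needs one more line. The Frattini subgroup lies in $\ker\varphi$ because the target is elementary abelian, and the group is generated by $m|I|$ elements. Together these force $\ker\varphi$ to equal the Frattini subgroup, and Burnside's basis theorem then gives that every minimal generating set has exactly $m|I|$ elements.

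Compared with the citation, your approach is self-contained and proves the stronger statement. It also directly gives facts the paper later argues separately: every element of $U_{(i,j)}$, and hence of $B$, has zero $(i,j)$-entry for $(i,j)\in I$. That is exactly what the firmness and thickness parts of Theorem~\ref{thm:CG_of_Pattern} rely on.
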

Note that this means we can describe any element (possibly not uniquely) of $UT_{\widehat{I}}(n,\FF_q)$ as a product of matrices $e_{i,j}(t)$, with $(i,j)\in I$, the set of covering relations.

\begin{proposition}\label{prop:Cosets_representative}
    Let $\widehat{K}\subseteq \widehat{I}\subseteq \Delta_n$ be closed sets. For each $g\in UT_{\widehat{I}}(n,q)$, there exists a unique $h\in g UT_{\widehat{K}}(n,q)$ such that $h_{i,j}=0$ for all $(i,j)\in \widehat{K}$.
\end{proposition}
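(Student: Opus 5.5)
The plan is a counting argument: I will prove uniqueness directly and get existence from cardinalities.

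Let $M=\{h\in UT_{\widehat{I}}(n,\FF_q)\mid h_{i,j}=0 \text{ for all } (i,j)\in\widehat{K}\}$. An element of $M$ may take arbitrary values in the positions of $\widehat{I}\setminus\widehat{K}$ and is zero elsewhere off the diagonal, so $|M|=q^{|\widehat{I}|-|\widehat{K}|}$. Since $|UT_{\widehat{I}}(n,\FF_q)|=q^{|\widehat{I}|}$ and $|UT_{\widehat{K}}(n,\FF_q)|=q^{|\widehat{K}|}$, the left cosets of $UT_{\widehat{K}}(n,\FF_q)$ in $UT_{\widehat{I}}(n,\FF_q)$ are also $q^{|\widehat{I}|-|\widehat{K}|}$ in number. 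These cosets partition $UT_{\widehat{I}}(n,\FF_q)\supseteq M$. Hence it suffices to show that each coset contains at most one element of $M$: the map $M\to\{\text{cosets}\}$ is then injective between sets of equal size, so it is bijective, which gives existence as well.

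For uniqueness, suppose $h,h'\in M$ lie in the same coset. Then $h'=hk$ with $k\in UT_{\widehat{K}}(n,\FF_q)$, and I will show $k=\mathbf{1}$. Assume not, and choose an off-diagonal position $(i,j)$ with $k_{i,j}\neq 0$ and $j-i$ minimal among such positions. Since $k\in UT_{\widehat{K}}(n,\FF_q)$, we have $(i,j)\in\widehat{K}$.

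Expanding the matrix product and using that $h$ and $k$ are unitriangular gives
\[ (hk)_{i,j}=h_{i,j}+k_{i,j}+\sum_{i<l<j}h_{i,l}k_{l,j}. \]
Each term $k_{l,j}$ with $i<l<j$ vanishes by the minimality of $j-i$. Moreover $h_{i,j}=0$ because $(i,j)\in\widehat{K}$ and $h\in M$. So $h'_{i,j}=(hk)_{i,j}=k_{i,j}\neq 0$. This contradicts $h'\in M$, since $(i,j)\in\widehat{K}$. Therefore $k=\mathbf{1}$ and $h=h'$.

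I expect no serious obstacle. The only point needing care is ordering the positions by $j-i$, so that the cross terms in the product formula disappear. This ordering is what makes the argument work without any explicit construction of the representative. If a constructive proof were preferred, one could instead clear the entries of $g$ on $\widehat{K}$ one at a time in increasing order of $j-i$, by right multiplication by suitable $e_{i,j}(t)$ with $(i,j)\in\widehat{K}$. The counting argument avoids the bookkeeping this would require.
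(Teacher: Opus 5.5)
Your proof is correct, but it obtains existence by a different route from the paper. The paper writes $h=gu$ with $u\in UT_{\widehat{K}}(n,q)$ and treats the conditions $h_{i,j}=0$ for $(i,j)\in\widehat{K}$ and $u_{i,j}=0$ for $(i,j)\notin\widehat{K}$ as a triangular system in the unknowns $u_{i,j}$. It solves this system by induction on $j-i$ via $u_{i,j}=-g_{i,j}-\sum_{i<t<j}g_{i,t}u_{t,j}$, so existence and uniqueness come out of the recursion at the same time.

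You use the same entry formula and the same ordering by $j-i$, but only for the homogeneous question, showing $k=\mathbf{1}$ by a minimal-counterexample argument. You then get existence from the count $|M|=q^{|\widehat{I}|-|\widehat{K}|}=[UT_{\widehat{I}}(n,q):UT_{\widehat{K}}(n,q)]$. This is shorter and avoids setting up the system. The count you establish is also exactly the one the paper invokes later, in the corollary identifying flags of type $J$ with the matrices in $M_J$.

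The paper's argument, on the other hand, is constructive and gives an explicit recursive formula for the representative. It also never uses finiteness, so it applies unchanged over infinite fields such as $\overline{\FF_p}$, which the paper considers in its final section. Your existence step depends on $\FF_q$ being finite, although your uniqueness step does not.
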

\begin{proof}
    Let $g\in UT_{\widehat{I}}(n,q)$. Any element $h\in gUT_{\widehat{K}}(n,q)$ can be written in the form $h=gu$ where $u\in UT_{\widehat{K}}(n,q)$, since $UT_{\widehat{K}}(n,q) \leq UT_{\widehat{I}}(n,q)$. 
    We want to show that there exists an unique $u\in UT_{\widehat{K}}(n,q)$ such that $h_{i,j}=0$ for all $(i,j)\in \widehat{K}$.

    Given any $(i,j)\in \Delta_n$, and since we are working with unitriangular matrices, we have the following:

    $$
    h_{i,j}=\sum_{i\leq t\leq j} g_{i,t}u_{t,j}=u_{i,j}+g_{i,j} +\sum_{i<t<j} g_{i,t}u_{t,j}
    $$
    Then there exists a unique $u\in UT_{\widehat{K}}(n,q)$ if and only if the following system of equations on $u_{i,j}$ has an unique solution.
    $$
    \begin{cases}
        u_{i,j}+g_{i,j} +\sum_{i<t<j} g_{i,t}u_{t,j}=0\quad &(i,j)\in \widehat{K}\\
        u_{i,j}=0 \quad &(i,j)\notin \widehat{K}
    \end{cases}
    $$
    
    We shall solve this system by induction on $j-i$.
    First suppose that $j-i=1$. Then, there exists no $t$ such that $i<t<j$, and so $\sum_{i<t<j}g_{i,t}u_{t,j}$ is the empty sum. Hence, we will obtain
        $$
    \begin{cases}
        u_{i,j}+g_{i,j}=0\quad &(i,j)\in \widehat{K}\\
        u_{i,j}=0 \quad &(i,j)\notin \widehat{K}
    \end{cases}
    $$
    In this case, $j-i=1$, there exists a unique solution.  

    Let $l\in \mathbb{N}$ such that $l>1$. Suppose, by the induction hypothesis, that for all $(r,s)\in\Delta_n$ with $s-r< l$ the following equations have unique solution.
    $$
    \begin{cases}
        u_{r,s}+g_{r,s} +\sum_{r<t<s} g_{r,t}u_{t,s}=0\quad &(r,s)\in \widehat{K}\\
        u_{r,s}=0 \quad &(r,s)\notin \widehat{K}.
    \end{cases}
    $$
    Consider now all $(i,j)\in \Delta$ such that $j-i=l$. Take the following equations for these $(i,j)$
    $$
    \begin{cases}
        u_{i,j}+g_{i,j} +\sum_{i<t<j} g_{i,t}u_{t,j}=0\quad &(i,j)\in \widehat{K}\\
        u_{i,j}=0 \quad &(i,j)\notin \widehat{K}
    \end{cases}
    $$
    and rearrange as follows
    $$
    \begin{cases}
        u_{i,j}=-g_{i,j} -\sum_{i<t<j} g_{i,t}u_{t,j}=0\quad &(i,j)\in \widehat{K}\\
        u_{i,j}=0 \quad &(i,j)\notin \widehat{K}.
    \end{cases}
    $$
    For all $t$ such that $i<t<j$, we have $j-t<l$, which by the induction hypothesis implies $u_{t,j}$ are uniquely determined. Since $u_{i,j}$ is a linear combination of such $u_{t,j}$ then $u_{i,j}$ is uniquely determined. And so, there exists an unique $u\in UT_{\widehat{K}}(n,q)$ such that $h=gu$, where $h_{i,j}=0$ for all $(i,j)\in \widehat{K}$. 
\end{proof}

\subsection{Incidence geometry and Coset Geometry}\label{sec:back:subsec:incidence}
\subsubsection{Incidence Geometry}
    A 4-tuple $\Gamma = (X,I,*,t)$ is called an \textit{incidence system} if
    \begin{enumerate}
        \item $X$ is a set whose elements are called the \textit{elements} of $\Gamma$,
        \item $I$ is the set of types of $\Gamma$, 
        \item $*$ is a symmetric and reflexive relation (called the \textit{incidence relation}) on $X$, and
        \item $t$ is a map from $X$ to $I$, called the \textit{type map} of $\Gamma$, such that distinct elements $x,y \in X$ with $x * y$ satisfy $t(x) \neq t(y)$. 
    \end{enumerate}

The \textit{rank} of $\Gamma$ is the cardinality of the type set $I$.
For an element $x\in X$, we will call it an \emph{$i$-element} if $t(x)=i$, with $i\in I$.
In an incidence system $\Gamma$, a \textit{flag} is a set of pairwise incident elements. The rank of a flag, \text{rank}(F), is the cardinality of $F$, that is $\textnormal{rank}(F)=|F|$. The type of a flag $F$, $t(F)$, is the set of types of the elements of $F$. 
A \textit{chamber} is a flag of type $I$. An incidence system $\Gamma$ is an \textit{incidence geometry} if all its maximal flags are chambers.

The \textit{incidence graph} of $\Gamma$ is a graph with vertex set $X$ and where two distinct elements $x$ and $y$ are connected by an edge if and only if $x * y$. A incidence geometry $\Gamma$ of type $I=\{i,j\}$ is said to be a \emph{generalized digon} if all its elements of type $i$ are incident with all its elements of type $j$. Graph wise, a generalized digon gives a complete bipartite incidence graph.

Let $F$ be a flag of $\Gamma$. An element $x\in X$ is {\em incident} to $F$ if $x*y$ for all $y\in F$. The \textit{residue} of $\Gamma$ with respect to $F$, denoted by $\Gamma_F$, is the incidence system formed by all the elements of $\Gamma$ incident to $F$ but not in $F$. Whenever $\Gamma$ is a geometry, the \textit{rank} of the residue $\Gamma_F$ is defined to be rank$(\Gamma) - |F|$. If $\Gamma$ is of rank $n$, then the \textit{corank} of a flag $F$ is the difference between the rank of $\Gamma$ and the rank of $F$.
For a flag $F$ of type $t(F)$, we say that the residue $\Gamma_F$ is of type $I\setminus t(F)$.

An incidence geometry $\Gamma$ is \textit{connected} if its incidence graph is connected. It is \textit{residually connected} if all its residues of rank at least two are connected.
An incidence geometry is \textit{firm}, \textit{thin} or \textit{thick} if all its residues of flags of corank one contain, respectively, at least two elements,  exactly two elements, more than two elements.

Let $\Gamma = \Gamma(X,I,*,t)$ be an incidence geometry. A {\em correlation} of $\Gamma$ is a bijection $\phi$ of $X$ respecting the incidence relation $*$ and such that, for every $x,y \in X$, if $t(x) = t(y)$ then $t(\phi(x)) = t(\phi(y))$. If, moreover, $\phi$ fixes the types of every element (i.e $t(\phi(x)) = t(x)$ for all $x \in X$), then $\phi$ is said to be an {\em automorphism} of $\Gamma$. The group of all correlations of $\Gamma$ is denoted by $\Cor(\Gamma)$ and the automorphism group of $\Gamma$ is denoted by $\Aut(\Gamma)$. Remark that $\Aut(\Gamma)$ is a normal subgroup of $\Cor(\Gamma)$ since it is the kernel of the action of $\Cor(\Gamma)$ on $I$.

If $\Aut(\Gamma)$ is transitive on the set of flags of $\Gamma$ then we say that $\Aut(\Gamma)$ is {\em flag-transitive} on $\Gamma$. We say $\Aut(\Gamma)$ is {\em chamber-transitive} on $\Gamma$ if $\Aut(\Gamma)$ is transitive on the set of chambers of $\Gamma$. Note that, if $\Gamma$ is a geometry, being chamber-transitive is equivalent to being flag-transitive~\cite[Proposition 2.2]{hypertopes}. If moreover, the stabilizer of a chamber in $\Aut(\Gamma)$ is reduced to the identity, we say that $\Gamma$ is {\em simply transitive} or {\em regular}. If $G$ is any group together with a homomorphism $\psi \colon G \to \Aut(\Gamma)$, we also say that $G$ is flag-transitive, simply transitive or regular on $\Gamma$ if that is true of $\psi(G) \leq \Aut(\Gamma)$.
A \textit{regular hypertope} is a thin, residually connected and flag-transitive incidence geometry~\cite{hypertopes}.

\subsubsection{Coset Geometries}\label{sec:CosetGeom}
The main focus of this paper is in incidence geometries that are obtained from a group $G$ together with a set $(G_i)_{i \in I}$ of subgroups of $G$, as described by Tits in~\cite{Tits1957}. 
\begin{definition}[Coset incidence system]
    A \emph{coset incidence system} $\Gamma(G,(G_i)_{i \in I})$, denoted also as $(G,(G_i)_{i \in I})$, is the incidence system over the type set $I$ where:
    \begin{enumerate}
        \item The elements of type $i \in I$ are left cosets of the form $g G_i $, $g \in G$.
        \item The incidence relation is given by non-empty intersection. More precisely, the element $g G_i$ is incident to the element $k G_j $ if and only if $g G_i \cap k G_j \neq \emptyset$.
    \end{enumerate}
\end{definition}
We usually call the subgroups $G_i$ as \emph{(standard) maximal parabolic subgroups}. For $J\subseteq I$, we define \emph{(standard) parabolic subgroups} as $G_J=\cap_{i\in J} G_i$, and define $G_\emptyset=G$.
When $J=I\setminus\{i\}$, we say that $G_J=G_{I\setminus\{i\}}$ are the \emph{minimal parabolic subgroups}, and we will denote them as $G^i$. Finally, the subgroup $G_I=\cap_{i\in I} G_i$ is called the \emph{Borel subgroup} of $\Gamma$ and we will denote it as $B$. 
    
A coset incidence system that is an incidence geometry is called a \emph{coset geometry}.
Flag-transitive geometries can always be constructed as coset geometries.
The following theorem gives a group theoretical condition to check the flag-transitivity of $G$ on the coset incidence system $\Gamma(G,(G_i)_{i\in I})$. Notice that $G$ always naturally acts on the elements of $\Gamma(G,(G_i)_{i \in I})$ by left multiplication.

\begin{theorem}\cite[Proposition 2.2, adapted]{Piedade2026}\label{thm:cosetFT}
Let $\Gamma = (G,(G_i)_{i\in I})$ be a coset incidence system. Then $G$ is flag-transitive on $\Gamma$ if and only if for every $J,H,K \subseteq I$ we have $(G_J \cap G_H )(G_J \cap G_K ) = G_J \cap (G_H G_K )$.
Additionally, if $G$ is flag-transitive on $\Gamma$, then $\Gamma$ is a geometry.
\end{theorem}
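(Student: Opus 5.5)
The plan is to reduce flag-transitivity to a statement about stabilizers of the \emph{base flags} $F_J=\{G_j\mid j\in J\}$, $J\subseteq I$, and then to show that this statement is equivalent to the factorisation condition. First I record that the stabilizer of $F_J$ under left multiplication is $G_J$. Next I note that $G$ is flag-transitive on $\Gamma$ if and only if, for every $J\subseteq I$ and every $K\subseteq I\setminus J$, the group $G_J$ acts transitively on the flags of type $K$ that extend $F_J$ to a flag of type $J\cup K$. Indeed, $G$ is transitive on the elements of each type. Hence, arguing by induction on the rank, any flag of type $J\cup K$ can be moved so that its $J$-part is $F_J$, and transitivity on the extensions then sends it to $F_{J\cup K}$.

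\textbf{Necessity.} Take $x\in G_J\cap G_HG_K$ and write $x=hk$ with $h\in G_H$ and $k\in G_K$. Consider the set
$$F=F_H\cup F_J\cup\{hG_l\mid l\in K\}.$$
It is a flag. The cosets $G_i$ with $i\in H\cup J$ pairwise contain $1$. For $i\in H$ we have $h\in G_i\cap hG_l$, and for $j\in J$ we have $x=hk\in G_j\cap hG_l$. Types occurring twice give equal cosets, because $h\in G_H$ and $x\in G_J$. By flag-transitivity, $G_{H\cup J}$ maps the extension $\{G_l\mid l\in K\}$ of $F_{H\cup J}$ to $\{hG_l\mid l\in K\}$. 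So there is $a\in G_H\cap G_J$ with $a^{-1}h\in G_K$. Then $x=a\cdot(a^{-1}hk)$, where $a^{-1}hk\in G_K$, and also $a^{-1}hk=a^{-1}x\in G_J$. This gives $G_J\cap G_HG_K\subseteq(G_J\cap G_H)(G_J\cap G_K)$. The reverse inclusion is trivial.

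\textbf{Sufficiency.} By the reduction above, it suffices to treat a single new type $k$. Extensions of $F_J$ by an element $gG_k$ correspond to those $g$ lying in $\bigcap_{j\in J}G_jG_k$. So I must show
$$\bigcap_{j\in J}G_jG_k=G_JG_k,$$
since then $g\in G_JG_k$ and $gG_k=aG_k$ for some $a\in G_J$. For several new types one adds them one at a time, each time replacing $J$ by the enlarged type set. I prove this identity by induction on $|J|$.

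\textbf{Inductive step.} This is the only genuinely computational step, and the one I expect to be the main obstacle. Set $J'=J\cup\{j'\}$. The set $G_JG_k$ is right $G_k$-invariant, so
$$G_JG_k\cap G_{j'}G_k=(G_J\cap G_{j'}G_k)\,G_k.$$
The hypothesis with $H=\{j'\}$ and $K=\{k\}$ rewrites $G_J\cap G_{j'}G_k$ as $G_{J'}(G_J\cap G_k)$. The right-hand side therefore collapses to $G_{J'}G_k$, as required.

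\textbf{Geometry.} Finally, suppose $G$ is flag-transitive and $F$ is a maximal flag of type $J\neq I$. Moving $F$ to $F_J$ by an element of $G$, we may add $G_i$ for any $i\notin J$, contradicting maximality. Hence every maximal flag is a chamber, and $\Gamma$ is a geometry.
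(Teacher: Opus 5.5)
The paper gives no proof of this statement (it is quoted from \cite{Piedade2026}), so there is nothing internal to compare with. Your argument is correct and makes the result self-contained.

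\textbf{Necessity.} It rests on the right flag. From $x=hk$ you build $F_{H\cup J}\cup\{hG_l\mid l\in K\}$, and your remark that repeated types give equal cosets handles the case $K\cap(H\cup J)\neq\emptyset$. The element of $G_{H\cup J}$ carrying $F_{H\cup J\cup K}$ onto this flag then yields the splitting $x=a\cdot(a^{-1}x)$.

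\textbf{Sufficiency.} It goes through the classical criterion $\bigcap_{j\in J}G_jG_k=G_JG_k$ for $k\notin J$, and your induction is sound. Right $G_k$-invariance gives $G_JG_k\cap G_{j'}G_k=(G_J\cap G_{j'}G_k)G_k$. The instance $(J,\{j'\},\{k\})$ of the hypothesis collapses this to $G_{J\cup\{j'\}}G_k$. Adding types one at a time then carries any flag of type $L$ onto $F_L$.

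\textbf{A sharper statement.} Sufficiency uses only instances with $H,K$ singletons and $J,H,K$ pairwise disjoint. So you have in fact shown that these instances alone force flag-transitivity, and hence, by your necessity argument, every instance of the identity. That is a sharper form than the one quoted.

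\textbf{Cosmetic points.} Your two-sided ``reduction'' is only ever used in two restricted forms. Necessity uses the easy direction, which follows because the stabiliser of $F_J$ is $G_J$. Sufficiency uses the one-type-at-a-time form. You could state just those. In the necessity step it is also cleaner to apply flag-transitivity directly to the two flags of type $H\cup J\cup K$, rather than speak of an ``extension of type $K$'' when $K$ may meet $H\cup J$.
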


The most interesting coset geometries $\Gamma = (G,(G_i)_{i\in I})$ are those for which $G$ acts flag-transitively on $\Gamma$. In that case, properties like residual connectedness, firmness, thinness and thickness can also be checked via group-theoretical conditions.

For a subset $J\subseteq I$, we define $G^J=\langle G^i\mid i\in J\rangle$. In other words, $G^J$ is the subgroup of $G$ generated by the minimal parabolic subgroups $G^i$, for $i\in J$. Notice that $G^I$ does not need to be necessarily equal to $G$. With these notations, the following theorem gives us conditions to guarantee residual connectedness of a coset geometry.

\begin{theorem}\cite[Corollary 1.8.13]{buekenhout2013diagram}\cite[Proposition 1.8.12]{buekenhout2013diagram}\cite[Lemma 2.5]{Piedade2026}
\label{thm:CosetRC}
    Suppose that $\Gamma = (G,(G_i)_{i\in I})$ is a coset incidence system over
the finite set $I$. The following statements are equivalent:
\begin{enumerate}
        \item for all $J \subset I$ such that $|I \setminus J| \geq 2$, we have
        $G_J = \langle G_{\{i\} \cup J} | i \in I \setminus J \rangle$;\label{lem:RCequivs:itm:RC1}
        \item for all $J \subset I$ and distinct $i,k\in I\setminus J$, we have 
        $G_J = \langle G_{\{i\} \cup J} , G_{\{k\} \cup J} \rangle$; \label{lem:RCequivs:itm:RC2}
        \item for every $J\subseteq I$, $G_{J} = G^{I\setminus J}$;\label{lem:RCequivs:itm:BottomUp}
        \item for every $J\subseteq I$, $G^J = G_{I\setminus J}$;\label{lem:RCequivs:itm:UpBottom}
        \item for every $J,K\subseteq I$, $G^J\cap G^K = G^{J\cap K}$;\label{lem:RCequivs:itm:Intersection}
    \end{enumerate}
Moreover, if G acts flag-transitively on $\Gamma$ and any of the above is true, then $\Gamma$ is a residually connected geometry.
\end{theorem}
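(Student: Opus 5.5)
The plan is to work around the elementary inclusions between the two families of subgroups. Throughout I use $G^\emptyset=G_I=B$ (the empty generated subgroup must be read as $B$ for (3)–(4) to make sense at $J=\emptyset$). The basic observation is that for $i\in J$ we have $G^i=G_{I\setminus\{i\}}\le G_{I\setminus J}$. Hence $G^J\le G_{I\setminus J}$ always holds, and (3) and (4) are literally the same statement after replacing $J$ by $I\setminus J$. So the real content is the reverse inclusion $G_{I\setminus J}\le G^J$. I would prove the chain (1)$\Rightarrow$(4)$\Rightarrow$(2)$\Rightarrow$(1), then (4)$\Rightarrow$(5) and (5)$\Rightarrow$(4), and finally the geometric consequence.

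For (1)$\Rightarrow$(4) I induct on $|J|$. The cases $|J|\le 1$ are tautological by the definitions and the convention. If $|J|\ge 2$, apply (1) to the subset $I\setminus J$, whose complement $J$ has size at least $2$. This gives $G_{I\setminus J}=\langle G_{(I\setminus J)\cup\{i\}}\mid i\in J\rangle$. By induction each $G_{(I\setminus J)\cup\{i\}}=G_{I\setminus(J\setminus\{i\})}$ equals $G^{J\setminus\{i\}}$. The subgroup generated by these is exactly $G^J$, since $|J|\ge2$ lets every $G^j$ with $j\in J$ appear in some $G^{J\setminus\{i\}}$.

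For (4)$\Rightarrow$(2), take $J$ and distinct $i,k\notin J$. Then $G_J=G^{I\setminus J}$ is generated by the $G^l$ with $l\notin J$. Each such $G^l$ lies in $G_{J\cup\{i\}}$ if $l\ne i$, and in $G_{J\cup\{k\}}$ if $l\ne k$, so $G_J\le\langle G_{J\cup\{i\}},G_{J\cup\{k\}}\rangle$. The reverse inclusion is trivial. The implication (2)$\Rightarrow$(1) is immediate, since the subgroup in (1) contains the one in (2) and is contained in $G_J$.

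For (4)$\Rightarrow$(5), compute
$G^J\cap G^K=G_{I\setminus J}\cap G_{I\setminus K}=G_{(I\setminus J)\cup(I\setminus K)}=G_{I\setminus(J\cap K)}=G^{J\cap K}$.

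The step I expect to be the main obstacle is (5)$\Rightarrow$(4). Condition (5) only speaks about the $G^J$ and never mentions $G$ itself, so one must first anchor $G^{I\setminus\{i\}}=G_i$. I would try to obtain this from the hypothesis $G^I=G$ (equivalently $G_\emptyset=G^I$), which is the implicit normalisation in the cited sources. Granting it, I would write $I\setminus J=\{i_1,\dots,i_r\}$ and iterate (5) on $K_s=I\setminus\{i_s\}$. This gives $G_{I\setminus J}=\bigcap_s G_{i_s}=\bigcap_s G^{I\setminus\{i_s\}}=G^{\bigcap_s(I\setminus\{i_s\})}=G^J$.

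For the final assertion, flag-transitivity together with Theorem~\ref{thm:cosetFT} makes $\Gamma$ a geometry. Moreover, the residue of the flag $\{G_j\mid j\in J\}$ is isomorphic to the coset geometry $(G_J,(G_{J\cup\{i\}})_{i\in I\setminus J})$, by the usual identification of cosets meeting the base flag. Finally, a coset geometry is connected precisely when the subgroups $G_{J\cup\{i\}}$ generate $G_J$, which is condition (1) for every residue of rank at least two.
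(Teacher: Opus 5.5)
The paper gives no proof of this statement; it is quoted from Buekenhout--Cohen and Piedade, so your argument has to stand on its own. Your proofs of (1)$\Leftrightarrow$(2)$\Leftrightarrow$(3)$\Leftrightarrow$(4), of (4)$\Rightarrow$(5), and of the residual-connectedness consequence are correct. They also cover every direction the paper later uses. The gap is (5)$\Rightarrow$(4). You ``grant'' $G_i=G^{I\setminus\{i\}}$ from $G^I=G$ but give no argument, and none can exist. Condition (5) only involves the minimal parabolics $G^i=G_{I\setminus\{i\}}$, and these do not determine the maximal parabolics $G_i$. Without $G^I=G$, (5) is even vacuous in rank two. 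For $|I|=2$ it holds for every pair $G_1,G_2$: comparable $J,K$ are trivial, and $G^1\cap G^2=G_2\cap G_1=G^{\emptyset}$. So $G=\ZZ/2\ZZ$ with $G_1=G_2=\{1\}$ satisfies (5) but violates (1) at $J=\emptyset$.

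Adding $G^I=G$ does not rescue the implication. Take $G=UT(4,\FF_2)$ and $e_i=e_{i,i+1}(1)$. Put $G_1=UT_{\widehat{K}}(4,\FF_2)$ with the closed set $\widehat{K}=\{(1,4),(2,3),(2,4),(3,4)\}$. Let $G_2=\langle e_1,e_3\rangle$, the pattern group of $\{(1,2),(3,4)\}$, and $G_3=\langle e_1,e_2\rangle$, the pattern group of $\{(1,2),(2,3),(1,3)\}$. An intersection of pattern groups is the pattern group of the intersected closed sets. Hence $G^1=G_2\cap G_3=\langle e_1\rangle$, $G^2=G_1\cap G_3=\langle e_2\rangle$, $G^3=G_1\cap G_2=\langle e_3\rangle$ and $B=\{\mathbf{1}\}$. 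So $G^J=\langle e_j\mid j\in J\rangle$ for every $J$. Condition (5) is then exactly Proposition~\ref{prop:UniT_IP} for $n=4$, $q=2$, and $G^I=G$. Nevertheless $G^{\{2,3\}}=\langle e_2,e_3\rangle$ has order $8$, while $G_{\{1\}}=G_1$ has order $16$. So (4) fails, and (1) fails at $J=\{1\}$.

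Thus (5)$\Rightarrow$(4) is false as stated. What your iteration of (5) actually proves is that (5), together with $G^I=G$ and $G_i=G^{I\setminus\{i\}}$ for all $i\in I$, implies (4). Condition (5) must therefore either be listed only as a consequence of (1)--(4), or be strengthened by these anchoring conditions.
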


Finally, we present here the group theoretical conditions for firmness, thinness, and thickness of flag-transitive coset geometries. The conditions below for thinness and thickness is not stated verbatim in \cite{buekenhout2013diagram}, but it is a trivial consequence of the result for firmness.

\begin{theorem}\label{thm:CosetFIRM}\cite[Corollary 1.8.15 (adapted)]{buekenhout2013diagram}
     Suppose that $\Gamma = (G,(G_i)_{i\in I})$ is a coset incidence system over
the finite set $I$ on which G acts flag-transitively. Then 
\begin{enumerate}
    \item $\Gamma$ is a firm geometry if and only if
    $G_{I\setminus \{j\} } \neq G_I$ for each $j \in I;$
    \item For each $j\in I$, let  $k_j := [G_{I\setminus \{j\} } : G_I]$. $\Gamma$ is a thin geometry (resp. thick geometry) if and only if $k_j=2$ (resp. $k_j>2$) for each $j\in I$.

\end{enumerate}
\end{theorem}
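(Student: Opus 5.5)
The plan is to identify the residue of a corank-one flag with a coset space of the Borel subgroup, and then read firmness, thinness and thickness directly off the index $k_j$.

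\textbf{Reduction to the standard flag.} Fix $j\in I$. By Theorem~\ref{thm:cosetFT}, $\Gamma$ is a geometry, so every flag of type $I\setminus\{j\}$ extends to a chamber. Since $G$ is flag-transitive, it is transitive on flags of any given type. Hence every flag of type $I\setminus\{j\}$ is a $G$-image of the standard flag
$$F_j=\{G_i\mid i\in I\setminus\{j\}\}.$$
The $G$-action on the elements by left multiplication preserves incidence, so it maps residues bijectively onto residues. It therefore suffices to compute $|\Gamma_{F_j}|$ for this single flag $F_j$ and each $j$.

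\textbf{Bijection of the residue with $G^j/G_I$.} Recall that $G^j=G_{I\setminus\{j\}}$. Define
$$\varphi\colon G^j/G_I\to \Gamma_{F_j},\qquad hG_I\mapsto hG_j .$$
\begin{itemize}
\item \emph{Well defined.} We have $G_I\le G_j$. Moreover, for $h\in G^j$ and each $i\neq j$, $h\in G_i$, so $hG_j\cap G_i\ni h$. Thus $hG_j$ is incident to every element of $F_j$.
\item \emph{Injective.} If $hG_j=h'G_j$ with $h,h'\in G^j$, then $h^{-1}h'\in G_j\cap G^j=G_I$.
\item \emph{Surjective.} Let $gG_j$ be incident to every $G_i$ with $i\neq j$. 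Then $F_j\cup\{gG_j\}$ is a chamber, as is the standard chamber $\{G_i\mid i\in I\}$. By chamber-transitivity there is $a\in G$ with $aG_i=G_i$ for all $i\neq j$ and $aG_j=gG_j$. The first condition gives $a\in\bigcap_{i\neq j}G_i=G^j$, and the second gives $\varphi(aG_I)=gG_j$.
\end{itemize}
Hence $|\Gamma_{F}|=k_j=[G_{I\setminus\{j\}}:G_I]$ for every flag $F$ of type $I\setminus\{j\}$.

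\textbf{Conclusion.} Firmness, thinness and thickness require every residue of corank one to have at least two, exactly two, or more than two elements, respectively. By the previous step, this translates into $k_j\ge 2$, $k_j=2$, or $k_j>2$ for each $j\in I$. Finally, $k_j\ge 2$ holds exactly when $G_{I\setminus\{j\}}\neq G_I$, which gives part (1); part (2) is the same count.

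The only delicate step is surjectivity. It is where flag-transitivity is genuinely used, namely to move an arbitrary chamber through $F_j$ back to the standard chamber by an element of $G^j$.
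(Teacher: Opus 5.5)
Your proof is correct. It implicitly uses that the residue of $F_j$ contains only $j$-elements, which holds because an $i$-element incident to $G_i$ must equal $G_i$ (cosets of the same subgroup are equal or disjoint). The paper gives no proof of its own and only cites Buekenhout--Cohen, so there is nothing in the paper to compare against directly. Your argument is the standard one behind that result: identify the corank-one residue of the standard flag with $G_{I\setminus\{j\}}/G_I$ via $hG_I\mapsto hG_j$, use chamber-transitivity for surjectivity, and transport the count to all flags of that type by the $G$-action.
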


In some cases we will deal, the Borel subgroup is non-trivial. Hence, whenever the Borel is a normal subgroup of $G$ we can use the following theorem.

\begin{theorem}\cite{Tits1957}\label{thm:CGQuotient}
    Let $\Gamma=(G,(G_i)_{i\in I})$ be a coset geometry. If $K$ is the kernel of the action of $G$ on $\Gamma$
, then $(G,(G_i)_{i\in I}) \cong (G/K,(G_i/K)_{i\in I})$
\end{theorem}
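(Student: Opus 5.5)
The plan is to write down the obvious map between the two coset incidence systems and check that it is a type-preserving bijection which preserves incidence in both directions. First I will identify the kernel. The group $G$ acts on the elements of type $i$ by left multiplication, and the stabilizer of $gG_i$ is $gG_ig^{-1}$. So the kernel of the action on $\Gamma$ is
$$K=\bigcap_{i\in I}\bigcap_{g\in G} gG_ig^{-1}.$$
This is a normal subgroup of $G$, and it is contained in every $G_i$ (take $g=\mathbf{1}$). Hence each quotient $G_i/K$ is a subgroup of $G/K$, and the right-hand side $(G/K,(G_i/K)_{i\in I})$ is a well-defined coset incidence system over the same type set $I$.

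Next I will define $\varphi$ by $\varphi(gG_i)=(gK)(G_i/K)$, with type $i$. It is well defined and injective on elements of type $i$ because $gG_i=hG_i$ if and only if $g^{-1}h\in G_i$. Since $K\le G_i$, this holds if and only if $(gK)^{-1}(hK)\in G_i/K$, by the correspondence theorem. Surjectivity is clear, since every coset of $G_i/K$ has the form $(gK)(G_i/K)$. The map preserves types by construction.

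The only real content is preservation of incidence. I will rewrite incidence algebraically: $gG_i\cap hG_j\neq\emptyset$ holds if and only if $g^{-1}h\in G_iG_j$. Similarly, $\varphi(gG_i)$ is incident to $\varphi(hG_j)$ if and only if $g^{-1}hK\in (G_i/K)(G_j/K)=(G_iG_j)/K$. The key point is that $G_iG_j$ is a union of cosets of $K$, because $K\le G_j$ gives $G_iG_jK=G_iG_j$. Therefore the full preimage of $(G_iG_j)/K$ under $G\to G/K$ is exactly $G_iG_j$, so the two conditions are equivalent. This is the step where the inclusion of $K$ in every $G_i$ is essential.

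Thus $\varphi$ is an isomorphism of incidence systems. It is also equivariant: $\varphi(x\cdot gG_i)=xK\cdot\varphi(gG_i)$, so $G/K$ acts faithfully on the quotient system and induces the same permutation group on elements as $G$ does on $\Gamma$. In particular, properties such as being a geometry, flag-transitivity and residual connectedness transfer between the two systems. I do not expect a genuine obstacle; the care needed is in identifying the kernel as the intersection of the cores of the $G_i$, so that $K\le G_i$ for all $i$.
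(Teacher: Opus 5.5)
Your proof is correct. The paper gives no proof of its own: it cites Tits and records the same identification $K=\bigcap_{i\in I}\bigcap_{g\in G} gG_ig^{-1}$ that you use, and your verification (well-definedness and injectivity via $K\le G_i$, incidence via $g^{-1}h\in G_iG_j$ together with $G_iG_jK=G_iG_j$) is the standard argument behind that citation.
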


Indeed, from~\cite{Tits1957}, we know that the kernel of the action of $G$ on $\Gamma$ is $K = \bigcap_{i\in I} (\bigcap_{g\in G} gG_ig^{-1})$, the biggest normal subgroup inside all of the maximal parabolic subgroups $G_i$. Hence, whenever $B$ is a normal subgroup, we get that $K = B$.

\subsubsection{Buekenhout Diagram}
Francis Buekenhout introduced in~\cite{buek} a diagram associated to incidence geometries. His insight was that, in many cases, the rank two residues of a geometry encode a lot of the global structure of the geometry. We will define this diagram only for flag-transitive geometries, as this ensures that all residues of type $\{i,j\}$, for $i,j\in I$, are isomorphic to each other. We will follow the definition of a diagram for a firm, residually connected, flag-transitive geometry over $I$, as found in~\cite{Buekenhout1996}.
\begin{definition}\label{def:BuekenhoutDiag}
  Let $\Gamma$ be a firm, residually connected, flag-transitive geometry over $I$.  The \emph{Buekenhout diagram} $\DD(\Gamma)$ is a complete graph whose vertex set is $I$ and with the following structure: 
\begin{itemize}
    \item to each vertex $i\in I$, we attach the order $o_i = |\Gamma_F|-1$, where $\Gamma_F$ is the residue of a flag of type $I\setminus\{i\}$;
    \item elements $i$ and $j$ of $I$ are not connected by an edge if, for all flags $F$ of type $I\setminus\{i,j\}$, the residues $\Gamma_F$ of type $\{i,j\}$ are always a generalized digon.
    \item otherwise, elements $i$ and $j$ of $I$ are connected by a labelled edge, with label $(d_{ij},g_{ij},d_{ji})$, where, for a residue $\Gamma_F$ of type $\{i,j\}$:
    \begin{enumerate}
        \item $g_{ij}$, the \emph{gonality}, is half of the girth of the incidence graph of $\Gamma_F$;
        \item $d_{ij}$ (resp. $d_{ji}$), the $i$-\emph{diameter} (resp. $j$-\emph{diameter}) is the greatest distance in the incidence graph of $\Gamma_F$ from an $i$-element (resp. $j$-element) to any other element in the incidence graph of $\Gamma_F$.
    \end{enumerate}  
\end{itemize}  
\end{definition}
Hence, for each rank 2 residue, we will have a diagram as follows.
$$\xymatrix@-1pc{*{\circ}\ar@{-}[rrr]^{d_{ij},g_{ij},d_{ji}}_(0.01){o_i}_(0.99){o_j} &&& *{\circ} }$$
Whenever $g_{ij}=d_{ij}=d_{ji}=k$, we say that this rank $2$ residue is a \emph{generalised $k$-gon} and, in this case, we ommit $d_{ij}$ and $d_{ji}$.

If $\Gamma$ is a firm, residually connected and flag-transitive coset geometry, we can determine the parameters $d_{ij}$, $g_{ij}$, $d_{ji}$ and $o_i$ using properties of the groups.
Consider $f\in \mathbb{N}_0$ and let $G^{[i,j]}_f$ denote the product of $f$ subgroups, alternating between $G^i$ and $G^j$, i.e. $G^{[i,j]}_f := G^iG^jG^i\ldots$, with $f$ factors. For example, we have that $G^{[i,j]}_3 = G^iG^jG^i$ and $G^{[j,i]}_2=G^jG^i$. By definition, we set $G^{[i,j]}_0=G^{[j,i]}_0=B$.
The following results are gathered from~\cite{Pasini1994}.
\begin{proposition}\cite[pp. 307]{Pasini1994}\label{prop:rank2_passini}
    Let $\Gamma=(G,(G_0,G_1))$ be a rank $2$ firm, residually connected, and flag-transitive coset geometry.
    Then,
    \begin{enumerate}
        \item $\Gamma$ is a generalized digon if and only if $G_0G_1=G_1G_0\ (=G)$;
        \item $d_{01}=f$ (resp. $d_{10}=f$) if and only if $G^{[0,1]}_f=G$ and $G^{[0,1]}_{f-1}\neq G$ (resp. $G^{[1,0]}_f=G$ and $G^{[1,0]}_{f-1}\neq G$);
        \item $g_{01}$ is the minimal positive integer $f$ such that $G^{[0,1]}_f\cap G^{[1,0]}_f \neq G^{[0,1]}_{f-1}\cup G^{[1,0]}_{f-1}$;
        \item for $i\in \{0,1\}$, $o_i=[G_i:B]-1$.
    \end{enumerate}
\end{proposition}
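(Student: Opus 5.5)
The plan is to translate each combinatorial quantity of the rank $2$ geometry into a statement about the base chamber $\{G_0,G_1\}$ and then use flag-transitivity to make that statement hold everywhere. Since $G$ acts on $\Gamma$ by left multiplication and is flag-transitive (hence transitive on elements of each type and on incident pairs), every local parameter can be computed at $G_0$ or at $G_1$. Throughout I use $I=\{0,1\}$, so the minimal parabolics are $G^0=G_{\{1\}}=G_1$ and $G^1=G_{\{0\}}=G_0$, and $B=G_0\cap G_1$.

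\textbf{The basic translation lemma.}
\begin{itemize}
\item Item (4) comes first because it is the simplest. The residue of a flag of type $I\setminus\{i\}=\{j\}$, say of the element $G_j$, consists of the $i$-elements $gG_i$ meeting $G_j$. These may be written as $gG_i$ with $g\in G_j$. Two of them, $gG_i$ and $g'G_i$, coincide iff $g^{-1}g'\in G_j\cap G_i=B$, so the residue has $[G_j:B]$ elements.
\item Reading this with the paper's indexing of the minimal parabolic $G^i$ gives $o_i=[G^i:B]-1$, which is the stated formula.
\item The same computation is the key step for the rest: the neighbours of an element $hG_j$ are exactly the cosets $hkG_i$ with $k\in G_j$.
\end{itemize}

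\textbf{Item (1).} The geometry is a generalized digon iff $G_0$ is incident to every $gG_1$, using transitivity on $0$-elements. This means $G_0\cap gG_1\neq\emptyset$ for all $g$, i.e. $G=G_0G_1$. Taking inverses gives $G=G_1G_0$ as well.

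\textbf{Item (2).} Iterate the neighbour description by induction on $f$. The elements at distance at most $f$ from the base element $G_j$ are precisely the cosets $gG_\ast$, where $g$ lies in the alternating product of $f$ factors starting appropriately, and the type $\ast$ is determined by the parity of $f$.
\begin{itemize}
\item For the inductive step, write a path of length $f$ as $G_j, k_1G_i, k_1k_2G_j,\dots$ with $k_1\in G_j$, $k_2\in G_i$, and so on.
\item Hence every element of the residue lies within distance $f$ of the base element iff the corresponding product $G^{[\cdot,\cdot]}_f$ equals $G$. Here I would use that this product absorbs a trailing parabolic factor, so it covers the cosets of both types at once.
\item The diameter is then the least such $f$, giving the "$=G$ but the $(f-1)$-product $\neq G$" criterion.
\item Flag-transitivity shows that the base element realises the maximum over all $i$-elements (resp. $j$-elements).
\end{itemize}

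\textbf{Item (3).} A shortest cycle has length $2f$ and passes, after translating by $G$, through the base chamber. It therefore yields two geodesic paths of length $f$, one leaving $G_0$ through $G_1$ and the other leaving $G_1$ through $G_0$, ending at a common element. I would carry out this step in three parts.
\begin{itemize}
\item By the path description from item (2), the common endpoint corresponds to an element $g\in G^{[0,1]}_f\cap G^{[1,0]}_f$.
\item The requirement that the cycle is genuinely new, and not a degenerate backtracking or a shorter cycle, corresponds to $g$ not already lying in $G^{[0,1]}_{f-1}\cup G^{[1,0]}_{f-1}$.
\item Conversely, such a $g$ produces two distinct paths whose union contains a cycle of length at most $2f$. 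Minimality of $f$ then forces the girth to be exactly $2f$.
\end{itemize}

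\textbf{Main obstacle.} The delicate point is this last equivalence in item (3): showing that a $g$ in the intersection but outside the shorter products gives an honest cycle of length exactly $2f$ rather than a shorter one. I would handle it by taking $f$ minimal and arguing that any shorter cycle would produce a witness at a smaller $f$. Firmness is needed here to guarantee that cycles exist at all, so the minimum is well defined.
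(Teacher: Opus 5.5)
The paper gives no proof of this proposition (it is quoted from Pasini), so I am judging your sketch on its own. The strategy of moving to the base chamber by flag-transitivity and reading neighbourhoods off alternating products is right, but two steps do not go through as written.

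\textbf{Indexing in (2) and (4).} Your count in (4) gives $o_i=[G_j:B]-1$ with $j\neq i$, i.e.\ $[G^i:B]-1$ with $G^i=G_{I\setminus\{i\}}$. This is \emph{not} the printed $[G_i:B]-1$, and you should say that you are correcting the statement rather than assert agreement.

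The same discrepancy is hidden in (2) behind ``$G^{[\cdot,\cdot]}_f$'' and ``starting appropriately''. Run your induction from the base $0$-element: the paths $G_0,\,k_1G_1,\,k_1k_2G_0,\dots$ (with $k_1\in G_0$, $k_2\in G_1,\dots$) show that every element lies within distance $f$ of $G_0$ iff $G_0G_1G_0\cdots$ ($f$ factors) equals $G$. In the paper's notation ($G^0=G_1$, $G^1=G_0$) this product is $G^{[1,0]}_f$. So your argument proves $d_{01}=f\iff G^{[1,0]}_f=G\neq G^{[1,0]}_{f-1}$, which is the printed item with $0$ and $1$ exchanged.

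The two versions agree when $d_{01}=d_{10}$, which covers the paper's application where both diameters are $4$. They do not agree in general. Take the vertices (type $0$) and edges (type $1$) of $K_n$, $n\ge 4$, with $G=\Sym(n)$, $G_0$ the stabilizer of vertex $1$ and $G_1$ that of edge $\{1,2\}$. A vertex has eccentricity $3$, yet $G^{[0,1]}_3=G_1G_0G_1\neq G$, since it maps $\{1,2\}$ only to pairs meeting $\{1,2\}$. Likewise $o_0=1$, while $[G_0:B]-1=n-2$.

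\textbf{Item (3) must be done with chambers, not elements.} On a $2f$-cycle through the base flag, two paths of length $f$ starting at $G_0$ and at $G_1$ can never end at a common element. The incidence graph is bipartite, and they end at the two ends of the opposite flag. Also, $g\in G^{[0,1]}_f$ determines the chamber $\{gG_0,gG_1\}$, i.e.\ the coset $gB$, not a single coset $gG_\ast$.

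The correct dictionary is this: $g\in G^{[0,1]}_f$ iff the chamber $gC$ is reached from $C=\{G_0,G_1\}$ by a gallery of type $(0,1,0,\dots)$ of length at most $f$ (stutters allowed). A gallery with two consecutive equal chambers can be shortened and then lies in $G^{[0,1]}_{f-1}\cup G^{[1,0]}_{f-1}$. A stutter-free gallery is exactly a non-backtracking walk in the incidence graph.

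With this dictionary, the two arguments you leave open become:
(i) the chamber opposite $C$ on a shortest cycle is reached by galleries of both types of length $f$, but by none of length $\le f-1$, because a shortest cycle is isometric (cycle distances equal graph distances);
(ii) conversely, if $g$ is in the intersection but not in the union, both galleries are stutter-free and start with different types. They therefore traverse $C$ and $gC$ in opposite directions and splice into a cyclically reduced closed walk of length $2f$, which contains a cycle of length at most $2f$.

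Two smaller points. Firmness forces a cycle only for finite geometries: $C_2*C_3$ with $G_0=C_2$, $G_1=C_3$ is firm, residually connected and flag-transitive, with a tree as incidence graph. So (3) tacitly assumes finite gonality. In (1), if the condition is read as just $G_0G_1=G_1G_0$, you need $G=\langle G_0,G_1\rangle$ to conclude $G_0G_1=G$.
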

With the results above, we can describe all rank $2$ residues of any firm, residually connected, flag-transitive coset geometry of rank $n\geq 2$ and describe its Buekenhout diagram.

\subsection{Independent sets and Intersection Property}\label{sec:back:IP}

Let $G$ be a group generated by a finite set $S=\{s_i\mid i\in I\}$, i.e. $G = \langle S\rangle$. We say $S$ is an \emph{independent generating set} if, for all $i\in I$, $s_i\notin\langle s_j\mid j\in I\setminus\{i\}  \rangle$.
Additionally, we say that $S$ is a \emph{strong independent generating set} if for $J,K\subseteq I$, we have $$\langle s_j\mid j\in J\rangle\cap \langle s_j\mid j\in K\rangle = \langle s_j\mid j\in J\cap K\rangle.$$
In abstract polytope theory, the property of strong independence is usually called \emph{intersection property}, as it involves the intersection of the subgroups generated by subsets of $S$.

Here, we will introduce a weaker version of the intersection property where, instead of proving this property for all possible subsets of the generators, we prove it for partitions of the generators. This definition is crucial, as we will prove that the unitriangular groups, and some cases of pattern groups, satisfy this property for a particular partition of its generating set.

\begin{definition}[Intersection Property for a partition $P$]
Let $G=\langle S\rangle $ be a group generated by a finite set of generators $S$. Additionally, let $P=\{S_i\mid i\in I\}$ be a disjoint partition of $S$.
Then, we say $G$ respects the \emph{intersection property for a partition $P$} if for any $J,K\subseteq I$,
$$\langle \cup_{j\in J}S_j\rangle \cap \langle \cup_{j\in K}S_j\rangle = \langle \cup_{j\in J\cap K}S_j\rangle$$
\end{definition}

Notice that in the definition above, we get the usual intersection property by taking the partition $P$ to be made up of singletons, i.e. $P = \{\{s\}\mid s\in S\}$. Additionally, if a group $G$ generated by $S$ satisfies the intersection property, it is direct that it satisfies the intersection property for any disjoint partition $P$ of $S$.

In the following proposition, we will relate properties of residual connectedness given in Theorem~\ref{thm:CosetRC} with the intersection property for a partition. Hence, we will follow the notation of subgroups $G_J$, $G^i$, $G^J$ and $B$ introduced in Section~\ref{sec:CosetGeom}.

\begin{proposition}\label{prop:General_IP_equiv_RC_B=1}
    Let $G=\langle S\rangle $ be a group generated by a finite set of generators $S$, and $P=\{S_i\mid i\in I\}$ be a disjoint partition of $S$.
    Then, $G$ respects the intersection property for a partition $P$ if and only if \begin{enumerate}
        \item $G_J = G^{I\setminus J}$, for any $J\subseteq I$;
        \item $G^i=\langle S_i\rangle$ for all $i\in I$; and 
        \item $B = \langle 1\rangle$.
    \end{enumerate}
    with $G_{i}=\langle S_j\mid j\in I\setminus\{i\}\rangle$.
\end{proposition}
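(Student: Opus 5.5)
We introduce the notation $H_K:=\langle \cup_{j\in K}S_j\rangle$ for $K\subseteq I$, with $H_\emptyset=\langle 1\rangle$. The point of the proof is that, in this setting, every subgroup appearing in the statement is of the form $H_K$. By hypothesis $G_i=H_{I\setminus\{i\}}$. By definition, $G_J=\bigcap_{i\in J}G_i$, $G^i=G_{I\setminus\{i\}}$, $G^J=\langle G^i\mid i\in J\rangle$ and $B=G_I$. The intersection property for $P$ says exactly that $H_J\cap H_K=H_{J\cap K}$ for all $J,K\subseteq I$. Both directions then reduce to bookkeeping with complements of index sets.

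For the forward direction, we assume the intersection property for $P$. By induction on $|J|$, applying $H_A\cap H_{A'}=H_{A\cap A'}$ repeatedly, we get
$$G_J=\bigcap_{i\in J}H_{I\setminus\{i\}}=H_{\bigcap_{i\in J}(I\setminus\{i\})}=H_{I\setminus J}.$$
For $J=\emptyset$ this is $H_I=G=G_\emptyset$. Taking $J=I\setminus\{i\}$ gives $G^i=H_{\{i\}}=\langle S_i\rangle$, which is (2). Taking $J=I$ gives $B=H_\emptyset=\langle 1\rangle$, which is (3). Finally,
$$G^{I\setminus J}=\langle \langle S_i\rangle \mid i\in I\setminus J\rangle=H_{I\setminus J}=G_J,$$
which is (1). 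Here an empty generating family gives the trivial group, consistent with (3).

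For the converse, we assume (1), (2) and (3). From (2), $G^K=\langle \langle S_i\rangle\mid i\in K\rangle=H_K$ for every $K\subseteq I$; when $K=\emptyset$ both sides are trivial. Combined with (1) this gives $G_J=H_{I\setminus J}$ for all $J$. When $J=I$, condition (3) confirms $G_I=B=\langle 1\rangle=H_\emptyset$. Since the $G_J$ are defined as intersections, $G_A\cap G_{A'}=G_{A\cup A'}$ holds trivially for all $A,A'\subseteq I$, including the case where one of them is empty because $G_\emptyset=G$. Therefore
$$H_J\cap H_K=G_{I\setminus J}\cap G_{I\setminus K}=G_{(I\setminus J)\cup(I\setminus K)}=G_{I\setminus(J\cap K)}=H_{J\cap K},$$
which is the intersection property for $P$.

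There is no real obstacle. The only points needing care are the degenerate index sets: $J=\emptyset$ (where $G_\emptyset=G=H_I$), $J=I$ (where $B$ must equal $H_\emptyset$, which is where (3) enters), and the convention that $G^\emptyset$ is trivial. We will state these conventions explicitly so that the computations above hold uniformly.
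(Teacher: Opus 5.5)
Your proof is correct and follows essentially the same route as the paper: both directions rest on identifying $G_J$ with $\langle \cup_{j\in I\setminus J}S_j\rangle$ for every $J\subseteq I$. The only difference is cosmetic. In the converse you get $H_J\cap H_K=H_{J\cap K}$ directly from the tautology $G_A\cap G_{A'}=G_{A\cup A'}$, using (3) to cover $J=I$, which makes your argument indifferent to whether $G^{\emptyset}$ is read as $\langle 1\rangle$ or as $B$; the paper instead cites Theorem~\ref{thm:CosetRC} to turn (1) into $G^J\cap G^K=G^{J\cap K}$ and treats $J\cap K=\emptyset$ separately via $G^{\emptyset}=G_I=B=\langle 1\rangle$.
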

\begin{proof}
    The direct implication is trivial. Indeed, by the intersection property, we have that $G_J = \langle S_i \mid i\in I\setminus J\rangle$ and $G^i = G_{I\setminus\{i\}}= \langle S_i\rangle$. Hence, $G_J = \langle G^j \mid j\in I\setminus J\rangle = G^{I\setminus J}$. Moreover, the intersection property also forces $B = G_I=\langle 1\rangle$.

    Consider the reverse implication.
    Due to property (1), we can alternatively define parabolic subgroups by the minimal parabolic subgroups. Hence, considering $G^i = \langle S_i\rangle$, we have that for any $J\subseteq I$, we have that $G_{I\setminus J}= G^J = \langle G^j \mid j\in J\rangle = \langle S_j\mid j \in J\rangle$. In particular, note that the maximal parabolics match $G_i = G^{I\setminus\{i\}} = \langle G^j \mid j\in I\setminus\{i\}\rangle =\langle S_j\mid j\in I\setminus\{i\}\rangle$ 
    and 
    $G^I = \langle S_j\mid j\in I\rangle = \langle S\rangle = G$.
    By Theorem~\ref{thm:CosetRC}, we have that Property $(1)$ is equivalent to $G^J \cap G^K = G^{J\cap K}$.
    Now, for $J,K\subseteq I$ such that $J\cap K\neq \emptyset$ we have that 
    $G^J \cap G^K = \langle S_j \mid j \in J\rangle \cap \langle S_j\mid j \in K\rangle = G^{J\cap K}= \langle S_j\mid j\in J\cap K\rangle$.
    If $J\cap K=\emptyset$, then $G^J \cap G^K = \langle S_j \mid j \in J\rangle \cap \langle S_j\mid j \in K\rangle = G^{\emptyset} = G_I = B= \langle 1\rangle$. Hence, we have proved the intersection property for the partition $P$. 
    
\end{proof}

\section{Pattern geometry}\label{sec:PatternGeom}

We will first study the natural geometries associated with Pattern groups, which we will call the ``pattern geometry". As unitriangular groups are a special case of pattern groups where $\widehat{I} = \Delta_n$, any results obtained here apply also for geometries built using the unitriangular groups. Nevertheless, in Section~\ref{sec:UniTriaGeom}, we will explore a bit more the geometries obtained from unitriangular groups, as these geometries have particular properties.

In this section, we will define $U:= UT_{\widehat{I}}(n,\FF_q)$, the pattern group associated with $\widehat{I}$ over the field $\FF_q$. To be precise, we will take $n\geq 2$ and $q=p^m$, with $p$ prime, and $m\geq 1$ a positive integer.
Additionally, $\widehat{I}$ is a closed subset of $\Delta_n$, $\PP_{\widehat{I}}$ is the partial order set induced by $\widehat{I}$, and consider $I$ the set of the covering relations of $\PP_{\widehat{I}}$. Remind that for a set $K$, $\widehat{K}$ denotes the transitive closure of $K$.
Finally, we will denote by $T$ the basis of the field $\FF_q$ seen as a $\FF_p$-vector space.

For each covering relation $(i,j)\in I$, we will define the maximal parabolic subgroup 
$$U_{(i,j)}:=\langle e_{k,l}(t)\mid t\in T, (k,l)\in I\setminus\{(i,j)\}\rangle.$$
Note that, in the subgroup $U_{(i,j)}$, we can have any of the elements $e_{k,t}(t)$, with $t\in T$ and $(k,t)\in \widehat{I\setminus\{(i,j)\}}$, the closure of the set under Definition~\ref{def:closure}.
Hence, each subgroup $U_{(i,j)}$ can be equally described as $\langle e_{k,l}(t)\mid t\in T, (k,l)\in \widehat{I\setminus\{(i,j)\}}\rangle$. This description is essential whenever we will intersect maximal parabolic subgroups.

Let $\UU\TT_{\widehat{I}}(n,q):= (U,(U_{(i,j)})_{(i,j)\in I})$ be the coset incidence system defined by $U$ and its subgroups $U_{(i,j)}$ as defined above. Here, we will prove that this coset incidence system is a residually-connected, firm coset geometry where $U$ acts flag-transitively. 

\subsection{Properties of the Parabolic Subgroups of $\UU\TT_{\widehat{I}}(n,q)$}

First, we will give a first description of the remaining parabolic subgroups of $\UU\TT_{\widehat{I}}(n,q)$. In Proposition~\ref{prop:Patter_U_K_redefin}, we will show we can simplify greatly this description.

\begin{lemma}\label{lem:patter_U_K}
    Let $K\subseteq I$. Then the parabolic subgroups $U_K$ of $\UU\TT_{\widehat{I}}(n,q)$ are $$U_K := \cap_{(i,j)\in K} U_{(i,j)} = \left\langle e_{k,l}(t)\mid t\in T, (k,l)\in\bigcap_{(i,j)\in K} \widehat{I\setminus\{(i,j)\}}\right\rangle.$$
\end{lemma}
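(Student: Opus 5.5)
The plan is to identify each maximal parabolic subgroup with a pattern group, and then use the fact that membership in a pattern group is decided entrywise. Fix $(i,j)\in I$ and write $\widehat{J}_{(i,j)}:=\widehat{I\setminus\{(i,j)\}}$, which is closed by construction.

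\textbf{Step 1: $U_{(i,j)}=UT_{\widehat{J}_{(i,j)}}(n,\FF_q)$.} For "$\subseteq$", every generator $e_{k,l}(t)$ with $(k,l)\in I\setminus\{(i,j)\}$ vanishes outside $\widehat{J}_{(i,j)}$, so it lies in that pattern group. For "$\supseteq$", let $\widehat{J}_{(i,j)}$ play the role of $\widehat{I}$ in Proposition~\ref{prop:Pattern_mingenset}; the pattern group $UT_{\widehat{J}_{(i,j)}}(n,\FF_q)$ is then generated by the $e_{k,l}(t)$ with $(k,l)$ a covering relation of $\widehat{J}_{(i,j)}$. I would show these covering relations lie in $I\setminus\{(i,j)\}$. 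Since $\widehat{J}_{(i,j)}$ is the transitive closure of $I\setminus\{(i,j)\}$, each $(k,l)\in\widehat{J}_{(i,j)}$ is reached by a chain of relations from $I\setminus\{(i,j)\}$. If $(k,l)$ is covering in $\widehat{J}_{(i,j)}$, that chain must have length one, so $(k,l)\in I\setminus\{(i,j)\}$. As a sanity check, the paper itself already uses the alternative description of $U_{(i,j)}$ via $\widehat{J}_{(i,j)}$, which supports this identification.

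\textbf{Step 2: intersect entrywise.} Let $M:=\bigcap_{(i,j)\in K}\widehat{J}_{(i,j)}$; this is closed, being an intersection of closed sets. A matrix $g\in UT(n,\FF_q)$ lies in every $UT_{\widehat{J}_{(i,j)}}$ exactly when $g_{k,l}=0$ for every $(k,l)$ outside some $\widehat{J}_{(i,j)}$. That is the same as $g_{k,l}=0$ for all $(k,l)\notin M$. Hence $U_K=UT_{M}(n,\FF_q)$.

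\textbf{Step 3: generators.} It remains to show $UT_M(n,\FF_q)=\langle e_{k,l}(t)\mid t\in T,\ (k,l)\in M\rangle$. The inclusion "$\supseteq$" is clear, since each such $e_{k,l}(t)$ vanishes outside $M$. For "$\subseteq$", two routes work:
\begin{itemize}
\item Apply Proposition~\ref{prop:Pattern_mingenset} to the closed set $M$; its covering relations form a subset of $M$.
\item Use the unique product decomposition $\prod_{(k,l)\in O_M}e_{k,l}(\alpha_{(k,l)})$, and note that each $e_{k,l}(\alpha)$ is a product of $e_{k,l}(t)$ with $t\in T$, by Proposition~\ref{prop:gens_props}(2).
\end{itemize}

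The main obstacle is Step 1. The delicate point is that removing $(i,j)$ from $I$ can make $(i,j)$ fail to lie in the closure, while other relations remain in it. We need the closure to be exactly the support of the generated group, with no extra entries produced by commutators. This is guaranteed because the pattern group on the closed set $\widehat{J}_{(i,j)}$ is itself a group containing all the generators.
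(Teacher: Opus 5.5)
Your proof is correct and follows essentially the same route as the paper. Both arguments identify each $U_{(i,j)}$ with the pattern group on $\widehat{I\setminus\{(i,j)\}}$ and then intersect supports: the paper does this by intersecting the bases $\{e_{k,l}(t)-\mathbf{1}\}$ of $U_{(i,j)}-\mathbf{1}$, whereas you argue entrywise, and your Step 1 supplies the justification that the paper only asserts before the lemma (that the covering relations of the closure lie in $I\setminus\{(i,j)\}$).
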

\begin{proof}
    Consider $u\in U_{K}$. Then we have that $u\in U_{(i,j)}$ for each $(i,j)\in K$, where we will take $U_{(i,j)}$ described as $\langle e_{k,l}(t)\mid t\in T, (k,l)\in \widehat{I\setminus\{(i,j)\}}\rangle$.
    Note that $\mathbf{U}_{(i,j)}=U_{(i,j)}-\mathbf{1}$ is a $\FF_p$-algebra with basis for $\mathbf{U}_{(i,j)}$
    $$\left\{e_{k,l}(t)-\mathbf{1}\mid t\in T,(k,l) \in\widehat{I\setminus\{(i,j)\}}\right\}$$
    i.e. all $(k,l)$ matrix entries for which there is a $(k,l)$-chain starting in $k$ and ending in $l$ in the poset $\PP_{\widehat{I}}$ after removing its $(i,j)$ covering relation.
    Hence, for $K\subseteq I$, the basis for the $(\cap_{(i,j)\in K} U_{(i,j)})-\mathbf{1}$ is
    $$\bigcap_{(i,j)\in K}\{e_{k,l}(t)-\mathbf{1}\mid t\in T,(k,l) \in\widehat{I\setminus\{(i,j)\}}\} = \{e_{k,l}(t)-\mathbf{1}\mid t\in T,(k,l) \in \cap_{(i,j)\in K}\widehat{I\setminus\{(i,j)\}}\}$$
    which implies that $U_K$ is generated by 
    $\langle e_{k,l}(t)\mid t\in T, (k,l)\in\cap_{(i,j)\in K}\widehat{I\setminus\{(i,j)\}}\rangle$.
\end{proof}

We would like to point out that the intersection of transitive closed sets is always a transitive closed set.
Hence, to avoid long and cumbersome notation, for $K\subseteq I$, we will for now on denote $\widehat{I_K} := \cap_{(i,j)\in K} \widehat{I\setminus\{(i,j)\}}$. We keep in this notation the ``hat" to remind that the resulting set is a transitive closed set.
Note that, under this notation, $\widehat{I_{\emptyset}} := \widehat{I}$, $\widehat{I_{\{(i,j)\}}} = \widehat{I\setminus\{(i,j)\}}$, and $\widehat{I_I} = \cap_{(i,j)\in I} \widehat{I\setminus\{(i,j)\}}$.
 From the point of view of chain of the poset $\PP_{\widehat{I}}$, an element $(k,l)\in \widehat{I_{\{(i,j)\}}}$ means that there exists a $(k,l)$-chain in $\PP_{\widehat{I}}$ which uses only covering relations $I\setminus\{(i,j)\}$. In general, for $K\subseteq I$, an element $(k,l)\in \widehat{I_{K}}$ means that, for each $(i,j)\in K$ there exists a $(k,l)$-chain using only covering relations $I\setminus\{(i,j)\}$.
 
 With this new notation, from now on, we will write $$U_{K} =  \langle e_{k,l}(t)\mid t\in T, (k,l)\in \widehat{I_{K}}\rangle.$$
One direct result of the lemma above is that all parabolic subgroups $U_K$ are themselves pattern groups, using the transitive closed set $\widehat{I_K}$, i.e. $U_K = UT_{\widehat{I_K}}(n,q)$.
Additionally, with the definition above, for each $(i,j)\in I$, we have that the minimal parabolic subgroups are $$U^{(i,j)}=\langle e_{k,l}(t)\mid t\in T, (k,l)\in \widehat{I_{I\setminus\{(i,j)\}}}\rangle.$$
Hence, with this, we can also describe also the Borel subgroup of $U$.

\begin{coro}\label{coro:Pattern_Borel}
The Borel subgroup $B$ of the coset incidence system $\UU\TT_{\widehat{I}}(n,q)$ is
    $$B := U_I= \bigcap_{(i,j)\in I} U_{(i,j)} = \langle e_{k,l}(t)\mid t\in T, (k,l)\in \widehat{I_I}\rangle.$$
\end{coro}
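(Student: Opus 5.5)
This corollary is the special case $K=I$ of Lemma~\ref{lem:patter_U_K}, so the plan is to specialise that lemma rather than argue from scratch. By the definitions in Section~\ref{sec:CosetGeom}, the Borel subgroup is $B=G_I$, the intersection of all maximal parabolic subgroups. Here this means $B=U_I=\bigcap_{(i,j)\in I}U_{(i,j)}$, which gives the first two equalities of the statement directly from the definitions.

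For the third equality, I would apply Lemma~\ref{lem:patter_U_K} with $K=I$. This yields
$$U_I=\left\langle e_{k,l}(t)\mid t\in T,\ (k,l)\in \bigcap_{(i,j)\in I}\widehat{I\setminus\{(i,j)\}}\right\rangle.$$
The index set on the right is, by the notation introduced after the lemma, exactly $\widehat{I_I}$. Substituting it gives the claimed description.

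There is no genuine obstacle; the only point needing care is bookkeeping. The lemma was proved for arbitrary $K\subseteq I$, including $K=I$, and it describes $U_K$ as the span of basis matrices indexed by an intersection of closed sets. Since an intersection of transitive closed sets is again closed, $\widehat{I_I}$ is closed. Hence the generated group is the pattern group $UT_{\widehat{I_I}}(n,\FF_q)$, and the generating set in the statement produces nothing beyond it. Beyond checking this, the proof is a direct substitution into Lemma~\ref{lem:patter_U_K}.
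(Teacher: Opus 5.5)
Your proposal is correct and follows the paper's own proof, which simply says the corollary is a direct consequence of Lemma~\ref{lem:patter_U_K}. Specialising that lemma to $K=I$ and rewriting the index set as $\widehat{I_I}$ is exactly the intended argument.
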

\begin{proof}
    This is a direct consequence of Lemma~\ref{lem:patter_U_K}.
\end{proof}

With the above, we have described all parabolic subgroups of $\UU\TT_{\widehat{I}}(n,q)$.
We will now prove that our parabolic subgroups respect the item $(2)$ of Theorem~\ref{thm:CosetRC}. In Section~\ref{sec:PatternIsFT}, we will prove that $U$ is flag-transitive in $\UU\TT_{\widehat{I}}(n,q)$. By Theorem~\ref{thm:CosetRC}, the following will give us that $\UU\TT_{\widehat{I}}(n,q)$ is residually connected geometry. 

\begin{proposition}\label{prop:pattern_RC1}
    Let $K\subseteq I$. Then for all $(a,b),(c,d)\in I\setminus K$ such that $(a,b)\neq (c,d)$, we have $U_K=\langle U_{K\cup\{(a,b)\}}, U_{K\cup\{(c,d)\}} \rangle$.
\end{proposition}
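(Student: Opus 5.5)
The inclusion $\langle U_{K\cup\{(a,b)\}}, U_{K\cup\{(c,d)\}}\rangle\subseteq U_K$ is immediate, since both subgroups are intersections of more maximal parabolics than $U_K$. For the reverse inclusion I will use that $U_K$ is itself a pattern group, $U_K=UT_{\widehat{I_K}}(n,q)$, as noted after Lemma~\ref{lem:patter_U_K}. By Proposition~\ref{prop:Pattern_mingenset}, $U_K$ is generated by the elements $e_{k,l}(t)$ with $t\in T$ and $(k,l)$ a \emph{covering relation of the poset $\PP_{\widehat{I_K}}$}. Note also that $\widehat{I_{K\cup\{x\}}}=\widehat{I_K}\cap\widehat{I\setminus\{x\}}$. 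So it suffices to prove the following claim: every covering relation $(k,l)$ of $\widehat{I_K}$ lies in $\widehat{I\setminus\{(a,b)\}}$ or in $\widehat{I\setminus\{(c,d)\}}$. Then $e_{k,l}(t)$ lies in $U_{K\cup\{(a,b)\}}$ or in $U_{K\cup\{(c,d)\}}$, and the result follows.

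\emph{Case $(k,l)\in I$.} If $(k,l)\neq(a,b)$, then $(k,l)\in I\setminus\{(a,b)\}$. Otherwise $(k,l)=(a,b)\neq(c,d)$, so $(k,l)\in I\setminus\{(c,d)\}$.

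\emph{Case $(k,l)\notin I$.} Suppose for contradiction that $(k,l)$ lies in neither closure. Then every $(k,l)$-chain in $\PP_{\widehat{I}}$ uses both covering relations $(a,b)$ and $(c,d)$. Along any such chain one edge comes first; without loss of generality
$$k\preceq a\prec b\preceq c\prec d\preceq l,$$
so $k\prec b\prec l$ strictly. Now fix $x\in K$. Since $(k,l)\in\widehat{I_K}$, there is a $(k,l)$-chain avoiding $x$. That chain also uses $(a,b)$, so it passes through $b$. Its prefix is then a $(k,b)$-chain avoiding $x$, and its suffix is a $(b,l)$-chain avoiding $x$. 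As $x\in K$ was arbitrary, both $(k,b)$ and $(b,l)$ lie in $\widehat{I_K}$, so $k\prec b\prec l$ in $\PP_{\widehat{I_K}}$. This contradicts $(k,l)$ being a covering relation there.

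The main obstacle is exactly this second case. The key step is to pass from the covering relations of $I$ to those of the smaller closed set $\widehat{I_K}$. One must also check that the "passes through $b$" argument holds uniformly for every chain. That is the reason for choosing $b$, an endpoint of the edge that every chain is forced to use, rather than an arbitrary intermediate vertex. One further point needs verifying: the order in which the two forced edges appear might a priori differ between chains. The argument only needs $b$ to be a strict intermediate vertex common to all chains, which holds regardless of that order. If instead $(c,d)$ comes first, the same argument goes through with $c$ playing the role of $b$.
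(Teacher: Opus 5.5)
Your proof is correct, but it follows a different route from the paper's. The paper takes an arbitrary $(k,l)\in\widehat{I_K}$ lying outside $\widehat{I\setminus\{(a,b)\}}\cup\widehat{I\setminus\{(c,d)\}}$ and builds $e_{k,l}(t)$ by hand. From $k\preccurlyeq a\prec b\preccurlyeq c\prec d\preccurlyeq l$ it shows that $(k,a),(b,c),(d,l)\in\widehat{I_K}$. Closedness then places $(k,c)$, $(c,d)$ and $(d,l)$ in $\widehat{I_K}\cap(\widehat{I\setminus\{(a,b)\}}\cup\widehat{I\setminus\{(c,d)\}})$. Finally two commutators via Proposition~\ref{prop:gens_props}(3) give $e_{k,d}$ and then $e_{k,l}$.

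You instead apply Proposition~\ref{prop:Pattern_mingenset} to the smaller closed set $\widehat{I_K}$, using $U_K=UT_{\widehat{I_K}}(n,q)$. This means only covering relations of $\PP_{\widehat{I_K}}$ need to be placed in one of the two closures. You then rule out a bad covering relation by contradiction: the forced vertex $b$ would lie strictly between $k$ and $l$ in $\PP_{\widehat{I_K}}$. This absorbs all the commutator bookkeeping into the minimal-generating-set result. It also needs a single intermediate vertex instead of the whole configuration $a,b,c,d$, so the degenerate cases the paper glosses over ($k=a$, $b=c$, $d=l$) never arise. What the paper's version buys is an explicit word for each $e_{k,l}(t)$, without invoking the generating-set theorem for the subposet.

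Two minor remarks. First, the order of the two forced edges cannot differ between chains: $b\preccurlyeq c$ in one chain and $d\preccurlyeq a$ in another would give $a\prec b\preccurlyeq c\prec d\preccurlyeq a$. Your argument does not need this anyway, since $k\prec b\prec l$ is a statement about the poset. Second, in your last sentence, if $(c,d)$ is the first edge, the vertex guaranteed to be strictly intermediate is $d$ (or $a$), not $c$, since $c$ may equal $k$. Your symmetric relabelling already handles that case.
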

\begin{proof}
    Let $K\subseteq I$ and $(a,b),(c,d)\in I\setminus K$ such that $(a,b)\neq (c,d)$.
    By Lemma~\ref{lem:patter_U_K}, we have that $U_K = \langle e_{k,l}(t)\mid t\in T, (k,l)\in \widehat{I_K}\rangle$ and $$\langle U_{K\cup\{(a,b)\}}, U_{K\cup\{(c,d)\}}\rangle = \langle  e_{k,l}(t)\mid t\in T, (k,l)\in \widehat{I_{K\cup\{(a,b)\}}}\cup \widehat{I_{K\cup\{(c,d)\}}}\rangle$$
Note that we can rewrite $\widehat{I_{K\cup\{(a,b)\}}}\cup \widehat{I_{K\cup\{(c,d)\}}}$ as 
$\widehat{I_K}\cap (\widehat{I_{\{(a,b)\}}}\cup \widehat{I_{\{(c,d)\}}})$.

Since any $(k,l)\in \widehat{I_K}\cap (\widehat{I_{\{(a,b)\}}}\cup \widehat{I_{\{(c,d)\}}})$ belongs trivially to $\widehat{I_K}$, we have that all generators $e_{k,l}(t)$ of $\langle U_{K\cup \{(a,b)\}}, U_{K\cup \{(c,d)\}}\rangle$, for $t\in T$, are in $U_K$. Hence
$\langle U_{K\cup \{(a,b)\}}, U_{K\cup \{(c,d)\}}\rangle \subseteq U_K$.
To prove the other inclusion, consider a generator $e_{k,l}(t)\in U_K$, for $(k,l)\in \widehat{I_K}$ and any $t\in T$.

Our goal is to prove we can write the generator $e_{k,l}(t)\in U_K$, for $(k,l)\in \widehat{I_K}$, by taking only generators of $\langle U_{K\cup \{(a,b)\}}, U_{K\cup \{(c,d)\}}\rangle$.
Let us assume that $(k,l) \notin \widehat{I_K}\cap (\widehat{I_{\{(a,b)\}}}\cup \widehat{I_{\{(c,d)\}}})$, as otherwise we have trivially that the generator is already in $\langle U_{K\cup \{(a,b)\}}, U_{K\cup \{(c,d)\}}\rangle$.
In this case, since $(k,l)\in \widehat{I_K}$, we have $(k,l)\notin \widehat{I_{\{(a,b)\}}}\cup \widehat{I_{\{(c,d)\}}}=(\widehat{I\setminus\{(a,b)\}})\cup (\widehat{I\setminus\{(c,d)\}})$. Notice that $(k,l)$ cannot be a covering relation, as otherwise the only possibility would be that $(k,l)=(a,b)=(c,d)$, a contradiction.
Due to the equivalence between elements of $\widehat{I}$ and chains of $\PP_{\widehat{I}}$, 
this means that any $(k,l)$-chain of $\widehat{I}$ requires always both the covering relations $(a,b)$ and $(c,d)$ in their construction, otherwise $(k,l)$ would be found in either the closure of $I\setminus\{(a,b)\}$ or the closure of $I\setminus\{(c,d)\}$.
As $(a,b)$ and $(c,d)$ are always in a $(k,l)$-chain, we can assume, without lost of generality, that there is an ordering in $\PP_{\widehat{I}}$ such that $k \preccurlyeq a \prec b\preccurlyeq c \prec d \preccurlyeq l$.
Hence, considering this ordering, we know that
$(k,c)\in \widehat{I_{\{(c,d)\}}}$ and $(b,l)\in \widehat{I_{\{(a,b)\}}}$.
Since $(a,b)$ and $(c,d)$ are in all $(k,l)$-chains and $(k,l)\in \widehat{I_K}$, then we also have that $(k,c)$, $(a,b)$, $(c,d)$ and $(b,l)$ are in $\widehat{I_K}$.
As $(a,b)$ and $(c,d)$ are covering relations in $I\setminus K$, they have not been removed in $\widehat{I_K}$. Therefore, it is clear that $(a,b),(c,d)\in \widehat{I_K}$.
Furthermore, for each $(i,j)\in K$ we have $(k,l)\in \widehat{I_{\{(i,j)\}}}$.
Since both $(a,b)$ and $(c,d)$ are in all $(k,l)$-chains, then for each $(i,j)\in K$ there is always a $(k,a)$-chain, a $(d,l)$-chain and, provided $b\neq c$, a $(b,c)$-chain. This is equivalent to having $(k,a),(d,l),(b,c)\in \widehat{I_K}$. As $\widehat{I_K}$ is a closed set, both $(k,c)$ and $(b,l)$ will belong to $\widehat{I_K}$.
Finally, observe that both $(a,b)$ and $(c,d)$ are also elements of the union $\widehat{I_{\{(a,b)\}}}\cup \widehat{I_{\{(c,d)\}}}$.

So far, we have proven that $(k,c)$, $(b,l)$, $(a,b)$ and $(c,d)$ are in $\widehat{I_K}\cap (\widehat{I_{\{(a,b)\}}}\cup \widehat{I_{\{(c,d)\}}})$. 
With this, we will prove that we can write the generator $e_{k,l}(t)$ taking only generators indexed in $\widehat{I_K}\cap (\widehat{I_{\{(a,b)\}}}\cup \widehat{I_{\{(c,d)\}}})$.
Since $(k,c)$ and $(c,d)$ are elements of $\widehat{I_K}\cap (\widehat{I_{\{(a,b)\}}}\cup \widehat{I_{\{(c,d)\}}})$, using Proposition~\ref{prop:gens_props}(3), we can always write the element $e_{k,d}(t)\in \langle U_{K\cup\{(a,b)\}}, U_{K\cup\{(c,d)\}}\rangle $. 
Now, if $d=l$, we are done. Suppose then that $d\neq l$.
We claim that $(d,l)\in \widehat{I_K}\cap (\widehat{I_{\{(a,b)\}}}\cup \widehat{I_{\{(c,d)\}}})$. Indeed, we have previously proved that $(d,l)\in \widehat{I_K}$ and it is easy to check that $(d,l)\in \widehat{I_{\{(a,b)\}}}\cup \widehat{I_{\{(c,d)\}}}$.
Therefore, we have a generator $e_{d,l}(t)\in \langle U_{K\cup\{(a,b)\}}, U_{K\cup\{(c,d)\}}\rangle$ and, together with the generator $e_{k,d}(t)$, by Proposition~\ref{prop:gens_props}(3) we have the generator $e_{k,l}(t)\in \langle U_{K\cup\{(a,b)\}}, U_{K\cup\{(c,d)\}}\rangle$.
 We have therefore proved that all generators of $U_K$ are in $\langle U_{K\cup\{(a,b)\}} , U_{K\cup\{(c,d)\}}\rangle$, and therefore $U_K\subseteq \langle U_{K\cup\{(a,b)\}} , U_{K\cup\{(c,d)\}}\rangle$.
\end{proof}
 
By Theorem~\ref{thm:CosetRC}, the property of parabolic subgroups proved in Proposition~\ref{prop:pattern_RC1} is equivalent to proving that $U_K = U^{I\setminus K} = \langle U^{(i,j)}\mid (i,j)\in I\setminus K\rangle$.
This property allow us to greatly simplify the definition of our parabolic subgroup $U_K$ given by Lemma~\ref{lem:patter_U_K}. Indeed, the following proposition provides that simplification.

\begin{proposition}\label{prop:Patter_U_K_redefin}
Let $K\subseteq I$. Then
 $U_K = \langle e_{i,j}(t), B\mid t\in T,(i,j)\in I\setminus K\rangle$, where $B$ is the Borel subgroup.
\end{proposition}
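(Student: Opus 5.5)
The equality is proved by double inclusion. The inclusion $\supseteq$ is straightforward, and the inclusion $\subseteq$ is reduced, via Proposition~\ref{prop:pattern_RC1}, to the minimal parabolic subgroups, where a chain argument in $\PP_{\widehat{I}}$ finishes the proof.

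\textbf{The inclusion $\supseteq$.} By Corollary~\ref{coro:Pattern_Borel}, $B=U_I\le U_K$. Let $(i,j)\in I\setminus K$. For every $(a,b)\in K$ we have $(i,j)\neq(a,b)$, so $(i,j)\in I\setminus\{(a,b)\}\subseteq \widehat{I\setminus\{(a,b)\}}$. Hence $(i,j)\in\widehat{I_K}$, and Lemma~\ref{lem:patter_U_K} gives $e_{i,j}(t)\in U_K$ for all $t\in T$.

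\textbf{Reduction for $\subseteq$.} Proposition~\ref{prop:pattern_RC1} verifies item (2) of Theorem~\ref{thm:CosetRC}, so item (3) holds: $U_K=U^{I\setminus K}=\langle U^{(i,j)}\mid (i,j)\in I\setminus K\rangle$. It therefore suffices to show, for each $(i,j)\in I\setminus K$, that
$$U^{(i,j)}\le H_{(i,j)}:=\langle e_{i,j}(t),B\mid t\in T\rangle .$$
Recall that $U^{(i,j)}$ is the pattern group on the closed set $J:=\widehat{I_{I\setminus\{(i,j)\}}}$. Since $B$ and the $e_{i,j}(t)$ lie in $U$, and the $e_{i,j}(t)$, $t\in T$, generate $\{e_{i,j}(s)\mid s\in\FF_q\}$ by Proposition~\ref{prop:gens_props}(2), it is enough to show that $e_{k,l}(s)\in H_{(i,j)}$ for every $(k,l)\in J$ and every $s\in\FF_q$. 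This is enough because a pattern group is generated by the $e_{k,l}(s)$ with $(k,l)$ in its closed set.

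\textbf{Key step: elements of $J$ outside $\widehat{I_I}$.} Take $(k,l)\in J$. If $(k,l)\in\widehat{I_I}$, then $e_{k,l}(s)\in B$ and we are done. Otherwise $(k,l)\notin\widehat{I\setminus\{(i,j)\}}$, since $(k,l)$ already lies in $\widehat{I\setminus\{(a,b)\}}$ for every other covering relation $(a,b)$. So every $(k,l)$-chain passes through the covering relation $(i,j)$, giving $k\preccurlyeq i\prec j\preccurlyeq l$.

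I then claim that $(k,i)\in\widehat{I_I}$ whenever $k\neq i$, and $(j,l)\in\widehat{I_I}$ whenever $j\neq l$.
\begin{itemize}
\item For a covering relation $(a,b)\neq(i,j)$: there is a $(k,l)$-chain avoiding $(a,b)$, and it must contain $(i,j)$. Splitting it at $(i,j)$ yields a $(k,i)$-chain and a $(j,l)$-chain, both avoiding $(a,b)$.
\item For $(a,b)=(i,j)$: a $(k,i)$-chain cannot use the covering relation $(i,j)$, because it ends at $i$. Likewise a $(j,l)$-chain cannot use $(i,j)$, because it starts at $j$.
\end{itemize}
Hence $e_{k,i}(s)\in B$ and $e_{j,l}(s)\in B$ for all $s\in\FF_q$.

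Also $e_{i,j}(1)\in H_{(i,j)}$, since $1$ is an $\FF_p$-combination of $T$. Using Proposition~\ref{prop:gens_props}(3), first form $e_{i,l}(1)=[e_{i,j}(1),e_{j,l}(1)]$, and then $e_{k,l}(s)=[e_{k,i}(s),e_{i,l}(1)]$. The degenerate cases $k=i$ or $j=l$ simply drop the corresponding commutator.

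\textbf{Main obstacle.} The delicate point is this chain bookkeeping: showing that every element of $J$ not in $\widehat{I_I}$ factors through $(i,j)$ with both end segments lying in the Borel closed set $\widehat{I_I}$. I expect this to be where care is needed, particularly for the removed covering relation $(a,b)=(i,j)$ itself.
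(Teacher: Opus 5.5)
Your proof is correct and takes essentially the same route as the paper. Like the paper, you reduce to the minimal parabolics $U^{(i,j)}$ via Proposition~\ref{prop:pattern_RC1} and Theorem~\ref{thm:CosetRC}, show that every $(k,l)\in\widehat{I_{I\setminus\{(i,j)\}}}\setminus\widehat{I_I}$ factors through $(i,j)$ with $(k,i),(j,l)\in\widehat{I_I}$, and then recover $e_{k,l}(s)$ by the commutator relations, which makes explicit the paper's appeal to the transitive closure of $\{(i,j)\}\cup\widehat{I_I}$.
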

\begin{proof}
Our main goal is to prove that all minimal parabolic subgroups $U^{(i,j)}$ can be written as $U^{(i,j)}= \langle e_{i,j}(t), B\mid t\in T\rangle$, for each $(i,j)\in I$.
We remind that, by Lemma~\ref{lem:patter_U_K}, we had  $U^{(i,j)}= \langle e_{k,l}(t)\mid t\in T, (k,l)\in \widehat{I_{I\setminus\{(i,j)\}}}\rangle$ and, by Corollary~\ref{coro:Pattern_Borel}, $B=\langle e_{k,l}(t)\mid t\in T, (k,l)\in \widehat{I_I}\rangle$.
We will introduce in this proof the following notation $\widehat{I^{\{(i,j)\}}}:=\widehat{I_{I\setminus\{(i,j)\}}}$.
First, notice that $\{(i,j)\}\cup \widehat{I_I} \subseteq \widehat{I^{\{(i,j)\}}}$, since $(i,j)\in \widehat{I^{\{(i,j)\}}}$ and $\widehat{I_I}\subseteq \widehat{I_{I\setminus\{(i,j)\}}}=\widehat{I^{\{(i,j)\}}}$.
Hence $\langle e_{i,j}(t), B\mid t\in T\rangle\subseteq U^{(i,j)}$.

To prove the reverse inclusion, we will prove that the generators of $U^{(i,j)}$ can be written in $\langle e_{i,j}(t), B\mid t\in T\rangle =\langle e_{k,l}(t)\mid t\in T, (k,l)\in \{(i,j)\}\cup \widehat{I_I}\rangle $.
In particular, take $(k,l)\in \widehat{I^{\{(i,j)\}}}$ and consider the generator $e_{k,l}(t)$ of $U^{(i,j)}$. 
Observe that 
\begin{align*}
    \{(i,j)\}\cup I_I &= \{(i,j)\}\cup \left(\bigcap_{(k,l)\in I} \widehat{I\setminus\{(k,l)\}}\right) \\ &=  \{(i,j)\}\cup (\widehat{I_{I\setminus\{(i,j)\}}}\cap \widehat{I\setminus\{(i,j)\}}) = \{(i,j)\}\cup (\widehat{I^{\{(i,j)\}}}\cap \widehat{I_{\{(i,j)\}}}).
\end{align*}
As $(k,l)\in \widehat{I^{\{(i,j)\}}}$, this means that, for each covering relation $(s,r)\in I\setminus\{(i,j)\}$, we can write a $(k,l)$-chain using only covering relations found in $I\setminus\{(s,r)\}$. If, for at least one $(s,r)\in I\setminus\{(i,j)\}$, we can write a $(k,l)$-chain without the covering relation $(i,j)$, meaning that we use only covering relations in $I\setminus\{(s,r),(i,j)\}$, then clearly $(k,l)\in \widehat{I\setminus\{(i,j)\}} = \widehat{I_{\{(i,j)\}}}$, hence $(k,l)\in \{(i,j)\}\cup (\widehat{I^{\{(i,j\})}}\cap \widehat{I_{\{(i,j\})}})$. Notice that an element $e_{k,l}(t)$, for $(k,l)$ in the conditions above, is a generator of $B$, since $(k,l)$ is in both $\widehat{I_{\{(i,j)\}}}$ and $\widehat{I^{\{(i,j)\}}}$, i.e. $(k,l)\in \widehat{I_I}$.

Now, let us suppose that, for all $(s,r)\in I\setminus\{(i,j)\}$, all $(k,l)$-chains written using covering relations $I\setminus\{(s,r)\}$ must always use the covering relation $(i,j)$. This also means that $k\preccurlyeq  i \prec j\preccurlyeq l$. We will assume that $k\neq i$ and $j\neq l$, otherwise the arguments are either trivial (the case where $(k,l)=(i,j)$) or one can adapt the following arguments accordingly.
Hence, for all $(s,r)\in I\setminus\{(i,j)\}$, we can always write $(k,i)$-chains and $(j,l)$-chains using the covering relations of $I\setminus\{(s,r)\}$, which implies that $(k,i),(j,l)\in \widehat{I^{\{(i,j)\}}}$. 
Finally, it is easy to check that $(k,i),(j,l)\in \widehat{I_{\{(i,j)\}}}$.
Hence, as all three $(k,i),(i,j)$, and $(j,l)$ are in $\{(i,j)\}\cup (\widehat{I^{\{(i,j)\}}}\cap \widehat{I_{\{(i,j)\}})}$, $(k,l)$ is an element in the transitive closure of $\{(i,j)\}\cup (\widehat{I^{\{(i,j)\}}}\cap \widehat{I_{\{(i,j)\}})}$, meaning that the generator $e_{k,l}(t)$ must belong to $ \langle e_{i,j}(t),B\mid t \in T\rangle$.
Therefore, we have that all minimal parabolics can be described as $U^{(i,j)}= \langle e_{i,j}(t), B\mid t\in T\rangle$.

By Proposition~\ref{prop:pattern_RC1} and Theorem~\ref{thm:CosetRC}, we have that $U_K = U^{I\setminus K}=\langle e_{i,j}(t), B\mid t\in T,(i,j)\in I\setminus K\rangle$, as wanted.
\end{proof}

This redefinition of the parabolic subgroups of $\UU\TT_{\widehat{I}}(n,q)$ allow us to study the intersection property of the parabolic subgroups of pattern groups associated with $\widehat{I}$.
Indeed, observe that the set of generators $\{e_{i,j}(t)\mid t\in T, (i,j)\in I\}$ can be partitioned by the covering relations $I$, with partition $P = \{S_{(i,j)}\mid (i,j)\in I\}$ where $S_{(i,j)} = \{e_{i,j}(t)\mid t\in T\}$. In the next proposition, we will give equivalent conditions for $UT_{\widehat{I}}(n,\FF_q)$ to have the intersection property for a partition $P$ as above. Particularly, one of this conditions derives from combinatorial properties of the poset $\PP_{\widehat{I}}$.

\begin{proposition}\label{prop:Pattern_B=1equiv}
The following are equivalent:
    \begin{enumerate}
        \item $U$ has the intersection property for the partition $P=\{S_{(i,j)}\mid (i,j)\in I\}$, where $S_{(i,j)} = \{e_{i,j}(t)\mid t\in T\}$;
        \item for all $K\subseteq I$, $U_K = \langle e_{i,j}(t)\mid t\in T, (i,j)\in I\setminus K\rangle$
        \item for all $(i,j)\in \widehat{I}$, $|C_{(i,j)}| = 1$
        \item $B = \langle\mathbf{1}\rangle$
    \end{enumerate}
\end{proposition}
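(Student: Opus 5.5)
The plan is to prove (2)$\Leftrightarrow$(4) and (1)$\Leftrightarrow$(2) group-theoretically from the results already established, and to prove (3)$\Leftrightarrow$(4) combinatorially using the description of $B$ in Corollary~\ref{coro:Pattern_Borel}.

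\textbf{(2)$\Leftrightarrow$(4).} For (2)$\Rightarrow$(4), take $K=I$ in (2): then $B=U_I=\langle\emptyset\rangle=\langle\mathbf{1}\rangle$. For (4)$\Rightarrow$(2), substitute $B=\langle \mathbf 1\rangle$ into Proposition~\ref{prop:Patter_U_K_redefin}. This gives $U_K=\langle e_{i,j}(t)\mid t\in T,(i,j)\in I\setminus K\rangle$ for every $K\subseteq I$.

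\textbf{(1)$\Leftrightarrow$(2).} Apply Proposition~\ref{prop:General_IP_equiv_RC_B=1} with $S=\{e_{i,j}(t)\mid t\in T,(i,j)\in I\}$ and the partition $P$. The maximal parabolics defined there, $\langle S_{(k,l)}\mid (k,l)\in I\setminus\{(i,j)\}\rangle$, are exactly our $U_{(i,j)}$, so the two settings coincide.
\begin{itemize}
\item Condition (1) of that proposition, $U_K=U^{I\setminus K}$, always holds, by Proposition~\ref{prop:pattern_RC1} together with Theorem~\ref{thm:CosetRC}.
\item If (2) holds, then $B=U_I$ is trivial, as shown above. Also $U^{(i,j)}=U_{I\setminus\{(i,j)\}}=\langle S_{(i,j)}\rangle$, so conditions (2) and (3) of Proposition~\ref{prop:General_IP_equiv_RC_B=1} hold, and we obtain (1).
\item Conversely, (1) gives $B=\langle \mathbf 1\rangle$ directly from that proposition, and hence (2) by the previous step.
\end{itemize}

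\textbf{(3)$\Leftrightarrow$(4).} By Corollary~\ref{coro:Pattern_Borel}, $B$ is the pattern group of the closed set $\widehat{I_I}$, so $|B|=q^{|\widehat{I_I}|}$. Hence (4) holds if and only if $\widehat{I_I}=\emptyset$. Recall that $(k,l)\in\widehat{I_I}$ if and only if, for every covering relation $(s,r)\in I$, there is a $(k,l)$-chain avoiding $(s,r)$.

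For (3)$\Rightarrow$(4), suppose every $C_{(k,l)}$ is a singleton $\{c\}$. The unique chain $c$ uses at least one covering relation $(s,r)$. No $(k,l)$-chain avoids $(s,r)$, so $(k,l)\notin\widehat{I_I}$. Since $(k,l)$ was arbitrary, $\widehat{I_I}=\emptyset$.

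For (4)$\Rightarrow$(3), argue by contraposition and suppose some $(k,l)$ has two distinct chains $c\neq c'$.
\begin{itemize}
\item Both chains start at $k$. Let $u$ be the last vertex of their common initial segment, so the next vertices after $u$ differ.
\item Let $v$ be the first vertex after $u$ on $c$ that also lies on $c'$ after $u$. Such a $v$ exists because both chains end at $l$.
\item The segments of $c$ and $c'$ from $u$ to $v$ are $(u,v)$-chains with disjoint interior vertices and different first edges. Hence they share no covering relation.
\item Every $(s,r)\in I$ is therefore avoided by one of these two $(u,v)$-chains, so $(u,v)\in\widehat{I\setminus\{(s,r)\}}$ for all $(s,r)\in I$.
\item Thus $(u,v)\in\widehat{I_I}$ and $B\neq\langle\mathbf 1\rangle$.
\end{itemize}

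\textbf{Main obstacle.} The delicate step is this last direction. Two distinct chains may share covering relations, so a single pair of chains for $(k,l)$ does not immediately place $(k,l)$ in $\widehat{I_I}$. The fix is to pass from $(k,l)$ to the sub-pair $(u,v)$, at which the two chains are edge-disjoint.
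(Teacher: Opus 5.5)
Your proof is correct, and it differs from the paper's in one implication.

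\textbf{Where you agree with the paper.}
\begin{itemize}
\item The paper also gets $(2)\Leftrightarrow(4)$ from Proposition~\ref{prop:Patter_U_K_redefin}.
\item It ties (1) to (4) via Proposition~\ref{prop:General_IP_equiv_RC_B=1}, Proposition~\ref{prop:pattern_RC1} and Theorem~\ref{thm:CosetRC}.
\item It proves $(3)\Rightarrow(4)$ with your observation that the unique chain cannot avoid its own covering relations.
\end{itemize}

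\textbf{Where you differ: getting back to (3).}

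The paper proves $(2)\Rightarrow(3)$ group-theoretically. Given two distinct $(i,j)$-chains with edge sets $W$ and $K$, it invokes the intersection property (available through $(2)\Leftrightarrow(4)\Leftrightarrow(1)$) to get $U^{W}\cap U^{K}=U^{W\cap K}$. It then notes that $e_{i,j}(t)$ lies in the left side but not the right, since no $(i,j)$-chain can be built from $W\cap K$. The intersection property holds for arbitrary subsets of $I$, so the paper never needs the two chains to be edge-disjoint.

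You prove $(4)\Rightarrow(3)$ purely combinatorially. Membership in $\widehat{I_I}$ only tests the removal of one covering relation at a time. That is why you must localize to the divergence point $u$ and the first re-meeting point $v$, producing two edge-disjoint $(u,v)$-chains.

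\textbf{What each route buys.}
\begin{itemize}
\item Your route is more elementary and self-contained for this implication, since it bypasses the residual-connectedness and intersection-property machinery.
\item It also exhibits an explicit nontrivial element $e_{u,v}(t)\in B$.
\item It yields a clean characterization: $\widehat{I_I}\neq\emptyset$ exactly when some pair admits two edge-disjoint chains.
\item The paper's route is shorter once the intersection property is in hand, and it avoids the divergence/reconvergence bookkeeping.
\end{itemize}
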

\begin{proof}
    $\mathbf{(1\Leftrightarrow4)}$
    From Proposition~\ref{prop:General_IP_equiv_RC_B=1}, Proposition~\ref{prop:pattern_RC1} and Theorem~\ref{thm:CosetRC}, we get this equivalence. 
    
    $\mathbf{(2\Leftrightarrow4)}$ From Proposition~\ref{prop:Patter_U_K_redefin}, we get this equivalence.

    $\mathbf{(2\Rightarrow 3)}$
    Suppose that for all $K\subseteq I$, $U_K = \langle e_{i,j}(t)\mid t\in T, (i,j)\in I\setminus K\rangle$. First, note that if $(i,j)\in \widehat{I}$, then $i\prec j$ in $\PP_{\widehat{I}}$ and, hence, we know that there exists at least one $(i,j)$-chain. Therefore, $|C_{(i,j)}| \geq 1$. For a contradiction, suppose that there exists $(i,j)\in \widehat{I}$ such that $|C_{(i,j)}|\geq 2$. Then we have at least two distinct $(i,j)$-chains, so consider $w,k\in  C_{(i,j)}$ with $w\neq k$. Let us define $w=(w_1,w_2,\ldots,w_l)$ and $k=(k_1,k_2,\ldots,k_d)$, with $w_1=k_1=i$ and $w_l=k_d=j$, and let us assume that $l\leq d$ without lost of generality.
    Therefore, there exists at least one position $f\in \{2,\ldots,l-1\}$ such that $w_f\neq k_f$. We will consider $f$ to be the lowest position on the two sequences where these diverge for the first time. This implies that $(w_1,\ldots,w_{f-1})=(k_1,\ldots,k_{f-1})$, with $(w_{f-1},w_f), (w_{f-1},k_f)\in I$ and $(w_{f-1},w_f)\neq (w_{f-1},k_f)$.
    
    Suppose, by the hypothesis $(2)$, that every parabolic subgroup can be written as $U_K = \langle e_{i,j}(t)\mid t\in T, (i,j)\in I\setminus K\rangle$, for $K\subseteq I$.
    Note that Proposition~\ref{prop:pattern_RC1}, together with $(1\Leftrightarrow 4 \Leftrightarrow 2)$ gives us that $U^K =\langle e_{i,j}(t)\mid t\in T, (i,j)\in K \rangle$ and $ U^K\cap U^W = U^{K\cap W}$, for $K,W\subseteq I$.
    
    Now, since chains can be seen as ordered sequences of covering relations, we have that the chains $w$ and $k$ define two distinct subsets of $I$: $w$ defines the subset $W=\{(w_1,w_2),(w_2,w_3),\ldots,(w_{l-1},w_l)\}$ while $k$ defines the subset 
    $K=\{(k_1,k_2),(k_2,k_3),\ldots,(k_{d-1},k_d)\}$.

    Since $(w_{f-1},w_f)\neq (w_{f-1},k_f)$, then we have that neither $(w_{f-1},w_f)$ nor $(w_{f-1},k_f)$ are  elements of $K\cap W$. 
    Hence, we have that neither $e_{w_{f-1},w_f}(t)$ nor $e_{w_{f-1},k_f}(t)$ are generators of $U^{K\cap W}$, for any $t\in T$. Additionally, this means that we cannot write a $(i,j)$-chain using $K\cap W$.
    However, in both $U^K$ and $U^W$ we can write a word to represent the element of $e_{i,j}(t)$, which cannot happen in $U^{K\cap W}$, i.e. for any $t\in T$, $e_{i,j}(t)\in U^K\cap U^W$ but $e_{i,j}(t)\notin U^{K\cap W}$, contradicting the fact that $U^K\cap U^W=U^{K\cap W}$.

    $\mathbf{(3\Rightarrow 4)}$ Suppose that for all $(i,j)\in \widehat{I}$, $|C_{(i,j)}| = 1$. For a contradiction, suppose that $B\neq \langle \mathbf{1}\rangle$. This means that $\widehat{I_I}\neq \emptyset$ and, therefore, consider $(i,j)\in \widehat{I_I}$. Observe that this implies that, for all $(s,r)\in I$, one can always write a $(i,j)$-chain taking as covering relation those found in $I\setminus\{(s,r)\}$. However, as $|C_{(i,j)}|=1$, there is a unique $(i,j)$-chain, say $w = (w_1,w_2,\ldots,w_l)$, which cannot be written taking exclusively covering relations from, for example, $I\setminus\{(w_1,w_{2}\}$, a contradiction. Hence, $\widehat{I_I}=\emptyset$ and $B=\langle \mathbf{1}\rangle$.

\end{proof}

With this, we can see that whenever there is a $(i,j)\in I$ such that $|C_{(i,j)}|>1$, the  intersection property for this partition on the generators will always fail. 
Even when this intersection property fails, we still want to extract some information, particularly whenever the Borel subgroup $B$ is normal in $U$.

\begin{proposition}\label{prop:Pattern_BorelNormal}
    Suppose $B\neq \langle \mathbf{1}\rangle$. Then the following are equivalent:
    \begin{enumerate}
        \item $B$ is a normal subgroup of $U$;
        \item for each $(k,l)\in \widehat{I_I}$, and all $(i,j)\in \widehat{I}$ such that $i\preccurlyeq k$ and $l\preccurlyeq j$, we have $(i,j)\in \widehat{I_I}$.
    \end{enumerate}
\end{proposition}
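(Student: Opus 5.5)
We will prove the two implications separately. Throughout, $B = UT_{\widehat{I_I}}(n,\FF_q)$ is itself a pattern group (Corollary~\ref{coro:Pattern_Borel}), so an element of $U$ lies in $B$ if and only if all its entries outside $\widehat{I_I}$ vanish. We will also use the generating set $\{e_{i,j}(t)\mid t\in T,(i,j)\in I\}$ of $U$ from Proposition~\ref{prop:Pattern_mingenset}, together with the commutator relations of Proposition~\ref{prop:gens_props}.

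For $(2)\Rightarrow(1)$, it suffices to show that conjugating a generator $e_{k,l}(s)$ of $B$, with $(k,l)\in\widehat{I_I}$, by a generator $e_{i,j}(t)$ of $U$, with $(i,j)\in I$, stays in $B$. By Proposition~\ref{prop:gens_props}(1), if $j\neq k$ and $i\neq l$ the two commute. Otherwise we use the matrix identity
\[
e_{i,j}(t)\,e_{k,l}(s)\,e_{i,j}(-t)=\mathbf{1}+(\mathbf{1}+f_{i,j}(t))f_{k,l}(s)(\mathbf{1}-f_{i,j}(t)).
\]
It shows the conjugate is $e_{k,l}(s)$ times a factor $e_{i,l}(\pm ts)$ (case $j=k$) or $e_{k,j}(\pm ts)$ (case $i=l$). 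In the first case $i\prec k$, and in the second $l\prec j$; in both the new position $(i',j')$ satisfies $i'\preccurlyeq k$ and $l\preccurlyeq j'$, with $(i',j')\in\widehat{I}$ by closedness. Hypothesis $(2)$ then puts $(i',j')$ in $\widehat{I_I}$, so the conjugate lies in $B$. Since $U$ is generated by these $e_{i,j}(t)$ and $B$ is generated by the $e_{k,l}(s)$, normality follows. Note that $e_{i,j}(t)$ for $(i,j)\in I$ and inverses suffice, since $e_{i,j}(t)^{-1}=e_{i,j}(-t)$ is handled identically.

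For $(1)\Rightarrow(2)$, take $(k,l)\in\widehat{I_I}$ and $(i,j)\in\widehat{I}$ with $i\preccurlyeq k$ and $l\preccurlyeq j$. We first reduce to one step at a time. Since $(i,k)$ and $(l,j)$, when nontrivial, are realized by chains of covering relations, it is enough to show two things: if $(k,l)\in\widehat{I_I}$ and $(i,k)\in I$ then $(i,l)\in\widehat{I_I}$, and dually if $(l,j)\in I$ then $(k,j)\in\widehat{I_I}$. The general statement then follows by induction along the chains. For the first, we conjugate $e_{k,l}(1)\in B$ by $e_{i,k}(1)\in U$. By the computation above, this gives $e_{k,l}(1)e_{i,l}(1)$, possibly in the other order or with a sign. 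Normality puts this element in $B$, hence $e_{i,l}(\pm1)\in B$, so its $(i,l)$ entry is nonzero. As $B$ is the pattern group of $\widehat{I_I}$, we conclude $(i,l)\in\widehat{I_I}$. The dual case is symmetric.

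The main obstacle is bookkeeping rather than any real difficulty. In the reverse direction we must check that the conjugate's product of matrices really has a nonzero entry at the new position, with no cancellation, and that the chain induction is legitimate. The cancellation check is safe because $f_{i,k}f_{k,l}f_{i,k}=0$. For the induction, $\widehat{I_I}$ is closed under each single step, so iterating the two steps gives the full condition $(2)$. The hypothesis $B\neq\langle\mathbf{1}\rangle$ only guarantees the statement is not vacuous.
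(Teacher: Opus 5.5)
Your proof is correct and follows the paper's argument. For $(1)\Rightarrow(2)$ you conjugate $e_{k,l}$ by a covering-relation generator $e_{i,k}$ (dually $e_{l,j}$) and iterate along chains, exactly as the paper does. For $(2)\Rightarrow(1)$ you check conjugation by generators of $U$ as the paper does, differing only in that you conjugate the elementary generators $e_{k,l}(s)$ of $B$ rather than a general element $\mathbf{1}+\sum_{(i,j)\in\widehat{I_I}} f_{i,j}(\alpha_{i,j})$ of $B$; this is the same computation.
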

\begin{proof}
$\mathbf{(1\Rightarrow 2)}$ Suppose $B$ is a normal subgroup of $U$ and let $(k,l)\in \widehat{I_I}$. If $k$ is a minimal element of $\PP_{\widehat{I}}$ and $l$ is a maximal element of $\PP_{\widehat{I}}$, we are done since there is no $i,j$ such that $i\prec k$ or $l \prec j$. If $k$ is not a minimal element of $\PP_{\widehat{I}}$, then there exists a covering relation $(i,k)\in I$.

We remind the reader that no elements of the covering relations are present in $\widehat{I_I}$.
Given the generator $e_{k,l}(t)\in B$, and since $B$ is a normal subgroup, we have  $$e_{i,k}(t)e_{k,l}(t)(e_{i,k}(t))^{-1} =e_{i,k}(t)e_{k,l}(t)e_{i,k}(-t) \in e_{i,k}(t)Be_{i,k}(-t) = B.$$ Hence, by writing $e_{i,k}(t)e_{k,l}(t)e_{i,k}(-t)$ as $(\mathbf{1} +f_{i,k}(t))(\mathbf{1} +f_{k,l}(t))( \mathbf{1}+f_{i,k}(-t))$ we have $\mathbf{1} + f_{k,l}(t) + f_{i,l}(t^2)\in B$. As $e_{k,l}(t)e_{i,l}(t^2) = \mathbf{1} + f_{k,l}(t) + f_{i,l}(t^2)$ and $e_{k,l}(t)\in B$, we have $e_{i,l}(t^2)\in B$ which is equivalent to having $(i,l)\in \widehat{I_I}$. If, instead, $l$ is not a maximal element of $\PP_{\widehat{I}}$, then there exists a covering relation $(l,j)\in I$, which, as above, results in $(k,j)\in \widehat{I_I}$.
We can repeat the above arguments until we get an element $(k',l')$ where $k'$ is a minimal element of $\PP_{\widehat{I}}$ and $l'$ is a maximal element of $\PP_{\widehat{I}}$. This repeated process gives us that all $(i,j) \in \widehat{I}$ such that $i\preccurlyeq k$ and $l\preccurlyeq j$ must be in $\widehat{I_I}$.

$\mathbf{(2\Rightarrow 1)}$
Suppose now that, for each $(k,l)\in \widehat{I_I}$, and all $(i,j)\in \widehat{I}$ such that $i\preccurlyeq k$ and $l\preccurlyeq j$, we have $(i,j)\in \widehat{I_I}$. We remind the reader that $B$ is a pattern group over $\widehat{I_I}$, hence all its non-zero entries are its main diagonal and entries $(i,j)\in \widehat{I_I}$.
Given an element of $g\in B$, one can write it as
$$g = \mathbf{1} + \sum_{(i,j)\in \widehat{I_I}} f_{i,j}(\alpha_{i,j}),$$
where, for each $(i,j)\in \widehat{I_I}$, $\alpha_{i,j}\in \FF_q$.
We are going to prove that, for any generator $e_{a,b}(t)$ of $U$, $e_{a,b}(t)\cdot g\cdot e_{a,b}(t)^{-1}=e_{a,b}(t)\cdot g\cdot e_{a,b}(-t)\in B$.
As $e_{a,b}(t) =\mathbf{1}+f_{a,b}(t)$, we have 
\begin{align*}e_{a,b}(t)\cdot g  &= \mathbf{1} + \sum_{(i,j)\in \widehat{I_I}} f_{i,j}(\alpha_{i,j})+ f_{a,b}(t) + \sum_{(i,j)\in \widehat{I_I}} \delta_{b,i}f_{a,j}(t\alpha_{i,j})\\
 &= \mathbf{1} + \sum_{(i,j)\in \widehat{I_I}} f_{i,j}(\alpha_{i,j})+ f_{a,b}(t) + \sum_{\substack{(i,j)\in \widehat{I_I}\\ b = i}} f_{a,j}(t\alpha_{i,j}).\end{align*}
Now, this results in $e_{a,b}(t)\cdot g\cdot e_{a,b}(-t)$ being equal to
$$\mathbf{1} + \sum_{(i,j)\in \widehat{I_I}} f_{i,j}(\alpha_{i,j})+ f_{a,b}(t) + \sum_{\substack{(i,j)\in \widehat{I_I}\\ b = i}} f_{a,j}(t\alpha_{i,j}) + f_{a,b}(-t) + \sum_{\substack{(i,j)\in \widehat{I_I}\\ a = j}} f_{i,b}(-t\alpha_{i,j}),$$
which is equivalent to
$$\mathbf{1} + \sum_{(i,j)\in \widehat{I_I}} f_{i,j}(\alpha_{i,j}) + \sum_{\substack{(i,j)\in \widehat{I_I}\\ b = i}} f_{a,j}(t\alpha_{i,j}) + \sum_{\substack{(i,j)\in \widehat{I_I}\\ a = j}} f_{i,b}(-t\alpha_{i,j}).$$
Now, for $(i,j)\in \widehat{I_I}$ such that $b=i$, this means that $i$ is not a minimal element of $\PP_{\widehat{I}}$. As $a \prec b = i$, then by our hypotheses, we have that $(a,j)\in \widehat{I_I}$. If we have $a = j$ for an $(i,j)\in \widehat{I_I}$, $j$ was not a maximal element of $\PP_{\widehat{I}}$ and, similarly, we have that $(i,b)\in\widehat{I_I}$.
Therefore, the matrix of $e_{a,b}(t)\cdot g\cdot e_{a,b}(-t)$ can only have non-zero entries exactly on the main diagonal and entries in $\widehat{I_I}$, meaning $e_{a,b}(t)\cdot g\cdot e_{a,b}(-t)\in B$.
As this happens for conjugation with any generator of $U$, we get that $B$ is a normal subgroup.
\end{proof}

 The proposition above might be a bit hard to understand conceptually, however it is more easily seen on the Hasse diagram of $\PP_{\widehat{I}}$. Below we will give some examples of sets of covering relations that provide a normal (or not) Borel subgroup.
\begin{exmp}
    \begin{enumerate}
        \item Let $I = \{(1,2),(1,3),(2,4),(3,4)\}$, with the following Hasse diagram.
        $$\xymatrix@-1pc{&*{\bullet}\ar@{-}[ld]\ar@{-}[rd]^(0.01){4} \\ 
        *{\bullet} && *{\bullet}\\
        & *{\bullet}\ar@{-}[lu]^(0.01){1}^(0.99){2}\ar@{-}[ru]_(0.99){3}}$$
        It is easy to calculate in this case that $\widehat{I_I} = \{(1,4)\}$. Indeed, a direct way of checking what are the elements of $\widehat{I_I}$ checking which paths on the poset are always present for each removal of a covering relation. In this case, it trivially satisfies the condition $(2)$ of Proposition~\ref{prop:Pattern_BorelNormal}. Hence, the Borel subgroup of this geometry is normal. 

        \item Alternatively, consider $I = \{(1,2),(1,3),(2,4),(3,4), (4,5)\}$, with the following Hasse diagram.
        $$\xymatrix@-1pc{&*{\bullet}\ar@{-}[d]^(0.01){5}\\
        &*{\bullet}\ar@{-}[ld]\ar@{-}[rd]^(0.01){4} \\ 
        *{\bullet} && *{\bullet}\\
        & *{\bullet}\ar@{-}[lu]^(0.01){1}^(0.99){2}\ar@{-}[ru]_(0.99){3}}$$
        As in the example before, we have $\widehat{I_I} = \{(1,4)\}$. However, contrary to the previous case, we do not satisfy condition $(2)$ of Proposition~\ref{prop:Pattern_BorelNormal}. Indeed, $(1,4)\in \widehat{I_I}$ and $(1,5)\in \widehat{I}$ such that $4 \preccurlyeq 5$, however $(1,5)\notin \widehat{I_I}$. Hence, the Borel subgroup of this geometry is not normal. 
        
        \item Consider now $I = \{(1,2),(1,3), (1,4), (2,5), (3,5), (3,6),(4,6), (5,7), (6,7)\}$, with the following Hasse diagram.
        $$\xymatrix@-1pc{&& *{\bullet}\ar@{-}[rd]^(0.01){7}^(0.99){6}\ar@{-}[ld]_(0.99){5}\\
        & *{\bullet}\ar@{-}[rd]\ar@{-}[ld] && *{\bullet}\ar@{-}[rd]\ar@{-}[ld]\\ 
        *{\bullet} && *{\bullet} && *{\bullet}\\
        && *{\bullet}\ar@{-}[llu]^(0.01){1}^(0.99){2}\ar@{-}[u]^(0.99){3}\ar@{-}[rru]_(0.99){4}}$$
        By checking which paths are always present for each removal of a covering relation,  we can calculate that $\widehat{I_I} = \{(1,5),(1,6),(1,7),(3,7)\}$. Again, we can check the $\widehat{I_I}$ verifies condition $(2)$ of Proposition~\ref{prop:Pattern_BorelNormal} and, therefore, the borel subgroup, under this set of covering relations, will be normal.
        
        \item Finally, let $I$ be the set of covering relations in the following Hasse diagram.
        $$\xymatrix@-1pc{
        & *{\bullet}\ar@{-}[rd]^(0.75){7}\ar@{-}[ld]_(0.99){8}_(0.01){10} && *{\bullet}\ar@{-}[rd]^(0.99){9}^(0.01){11}\ar@{-}[ld]\\
        *{\bullet}\ar@{-}[rd] && *{\bullet}\ar@{-}[rd]^(0.75){6}\ar@{-}[ld]_(0.75){5} && *{\bullet}\ar@{-}[ld]\\
        & *{\bullet}\ar@{-}[rd]\ar@{-}[ld] && *{\bullet}\ar@{-}[rd]\ar@{-}[ld]\\ 
        *{\bullet} && *{\bullet} && *{\bullet}\\
        && *{\bullet}\ar@{-}[llu]^(0.01){1}^(0.99){2}\ar@{-}[u]^(0.99){3}\ar@{-}[rru]_(0.99){4}}$$
        We have $\widehat{I_I} = \{(1,5),(1,6),(1,7),(1,10), (1,11), (3,7), (3,10), (3,11), (5,10), (6,11)\}$. Contrary to the example before, the Borel subgroup of $\UU\TT_{\widehat{I}}(n,q)$ will not be normal. Indeed, we have $(5,10)\in \widehat{I_I}$ and $(2,10)\in \widehat{I}$ such that $2\preccurlyeq 5$ in this poset, however $(2,10)\notin \widehat{I_I}$. The same can be said about $(6,11)\in \widehat{I_I}$ and $(4,11)\notin \widehat{I_I}$.
        Therefore, the Borel subgroup in this case is not normal.
    \end{enumerate}

\end{exmp}

\subsection{Flag-Transitivity}\label{sec:PatternIsFT}
Finally, we will prove that $U$ acts flag-transitively in $\UU\TT_{\widehat{I}}(n,q)$. By Theorems~\ref{thm:CosetRC} and~\ref{thm:CosetFIRM}, we can prove the remaining properties like residual-connectedness, firmness, thinness and thickness from the properties found in the previous section on the  parabolic subgroups of $\UU\TT_{\widehat{I}}(n,q)$.

\begin{proposition}\label{prop:Pattern_FT}
    $U$ is flag-transitive on $\UU\TT_{\widehat{I}}(n,q)$.
\end{proposition}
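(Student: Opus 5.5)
We plan to verify the criterion of Theorem~\ref{thm:cosetFT}: for all $J,H,K\subseteq I$ we need $(U_J\cap U_H)(U_J\cap U_K)=U_J\cap(U_HU_K)$. The inclusion $\subseteq$ is automatic, since $U_J\cap U_H$ and $U_J\cap U_K$ both lie in $U_J$, and their product lies in $U_HU_K$. The work is the reverse inclusion.

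The key structural input is that every parabolic $U_K$ is itself a pattern group, $U_K=UT_{\widehat{I_K}}(n,q)$ by Lemma~\ref{lem:patter_U_K}, and intersections of parabolics are again pattern groups on the intersected closed sets. Indeed, $U_J\cap U_H=U_{J\cup H}=UT_{\widehat{I_J}\cap\widehat{I_H}}(n,q)$, because a matrix in both has support in both closed sets. So it suffices to prove a general statement about pattern groups. Let $\widehat{A},\widehat{X},\widehat{Y}\subseteq\Delta_n$ be closed, and write $P_{\widehat{Z}}=UT_{\widehat{Z}}(n,q)$. We want
\[
P_{\widehat{A}}\cap (P_{\widehat{X}}P_{\widehat{Y}})=(P_{\widehat{A}\cap\widehat{X}})(P_{\widehat{A}\cap\widehat{Y}}).
\]

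The plan for this is a coordinate argument in the spirit of Proposition~\ref{prop:Cosets_representative}. Take $g=xy$ with $x\in P_{\widehat X}$, $y\in P_{\widehat Y}$ and $g\in P_{\widehat A}$. We will try to modify the factorization, replacing $x$ by $xz$ and $y$ by $z^{-1}y$ with $z\in P_{\widehat X\cap\widehat Y}$, so that $x$ becomes supported in $\widehat A\cap\widehat X$. Then $y=x^{-1}g\in P_{\widehat A}\cap P_{\widehat Y}$ automatically. To construct $z$, we proceed by induction on $j-i$, determining the entries one superdiagonal at a time, exactly as in the proof of Proposition~\ref{prop:Cosets_representative}. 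Fix the entries of $x$ with $j-i<l$ already lying in $\widehat A\cap\widehat X$, and look at a position $(i,j)\in\widehat X\setminus\widehat A$ with $j-i=l$. Expanding $g_{i,j}=0$ gives
\[
x_{i,j}+y_{i,j}+\sum_{i<t<j}x_{i,t}y_{t,j}=0.
\]
If $(i,j)\notin\widehat Y$, then $y_{i,j}=0$. The intermediate terms need $(i,t)\in\widehat X\cap\widehat A$ (by induction) and $(t,j)\in\widehat Y$. We then want to show these terms vanish, or that they can be absorbed by a correction $z$ with entries in $\widehat X\cap\widehat Y$.

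The main obstacle is exactly this step: controlling the cross terms $x_{i,t}y_{t,j}$ and showing the correction stays inside $\widehat X\cap\widehat Y$. For general closed sets this distributivity can fail. So we will use that our closed sets are not arbitrary: each $\widehat{I_K}$ is described by chains in $\PP_{\widehat I}$ avoiding prescribed covering relations. We will also use Proposition~\ref{prop:Patter_U_K_redefin}, which gives $U_K=\langle e_{i,j}(t),B\rangle$ for $(i,j)\in I\setminus K$ and $t\in T$.

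A cleaner fallback, which we would try first if the entrywise induction becomes messy, works modulo $B$. When $B$ is normal, pass to $U/B$ via Theorem~\ref{thm:CGQuotient}. In general, argue with the unique ordered-product normal form $g=\prod e_{i,j}(\alpha_{i,j})$ for a linear order refining $\prec$ (cf.\ \cite{Dipper2017}). Because the sets $\widehat{I_K}$ are unions of ``chain-support'' classes, the normal-form coefficients of $g$ split according to which covering relations the supporting chains use. That splitting should let us read off the factors in $U_J\cap U_H$ and $U_J\cap U_K$ directly.

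Once the identity holds for all $J,H,K$, Theorem~\ref{thm:cosetFT} gives flag-transitivity, and also that $\UU\TT_{\widehat I}(n,q)$ is a geometry.
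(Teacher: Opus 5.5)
Your reduction to three closed sets $\widehat A,\widehat X,\widehat Y$ and the level-by-level induction modelled on Proposition~\ref{prop:Cosets_representative} are the right setup. However, the reverse inclusion, which is the whole content of the proposition, is never proved. At the decisive point you stop at the cross terms $x_{i,t}y_{t,j}$, call them the main obstacle, and assert that the identity can fail for general closed sets. You then offer two fallbacks that are only gestured at.

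Passing to $U/B$ via Theorem~\ref{thm:CGQuotient} is unavailable when $B$ is not normal. Even when $B$ is normal, it does not simplify the identity.

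The normal-form fallback is essentially the paper's route. You write $u$ as an ordered product with the blocks $\widehat A\setminus(\widehat X\cup\widehat Y)$, $\widehat A\cap(\widehat X\setminus\widehat Y)$, $\widehat A\cap\widehat X\cap\widehat Y$, $\widehat A\cap(\widehat Y\setminus\widehat X)$, $\widehat I\setminus\widehat A$ in that order. You then read off the two factors once the coefficients on the first and last blocks vanish. The last block is handled by uniqueness of the normal form inside $P_{\widehat A}$. The first block is the real issue. Its vanishing does not follow merely from $u$ being a word in $\widehat X$ followed by a word in $\widehat Y$, because reordering into the fixed order produces commutator terms. For $u\in P_{\widehat A}$ it is in fact equivalent to $u\in P_{\widehat A\cap\widehat X}P_{\widehat A\cap\widehat Y}$. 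Your ``chain-support splitting'' asserts it without proof, so the fallback has the same hole.

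The missing observation is short, and with it your first plan works for arbitrary closed sets. So your remark that distributivity can fail is incorrect.

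\begin{itemize}
\item Take as inductive hypothesis that $x\in P_{\widehat X}$ vanishes off $\widehat A$ on every superdiagonal of level $<l$. Since $\widehat A$ is closed, $x^{-1}$ then has the same property. Hence so does $y=x^{-1}g$, because $g\in P_{\widehat A}$.
\item Now let $(i,j)\notin\widehat A$ with $j-i=l$. Both positions $(i,t)$ and $(t,j)$ with $i<t<j$ have level $<l$. So a cross term $x_{i,t}y_{t,j}$ can be nonzero only if $(i,t)\in\widehat A$ and $(t,j)\in\widehat A$, which would force $(i,j)\in\widehat A$.
\item Therefore all cross terms vanish, and $x_{i,j}+y_{i,j}=g_{i,j}=0$. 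If $(i,j)\notin\widehat Y$, this gives $x_{i,j}=0$. If $(i,j)\notin\widehat X$, there is nothing to do.
\item If $(i,j)\in\widehat X\cap\widehat Y$, replace $x$ by $x\,e_{i,j}(-x_{i,j})$ and $y$ by $e_{i,j}(x_{i,j})\,y$. This keeps $x\in P_{\widehat X}$ and $y\in P_{\widehat Y}$, and kills the $(i,j)$ entry of $x$. It changes no other entry of $x$ of level $\le l$, so the corrections at different positions of level $l$ do not interfere.
\end{itemize}

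After the top level, $x\in P_{\widehat A\cap\widehat X}$ and $y=x^{-1}g\in P_{\widehat A\cap\widehat Y}$. With $\widehat A=\widehat{I_J}$, $\widehat X=\widehat{I_H}$, $\widehat Y=\widehat{I_K}$, this is exactly the criterion of Theorem~\ref{thm:cosetFT}. No chain structure and no use of Proposition~\ref{prop:Patter_U_K_redefin} is needed.
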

\begin{proof}
    To check that $U$ is flag-transitive on $\UU\TT_{\widehat{I}}(n,q)$, following Theorem~\ref{thm:cosetFT}, we will prove that, for $J,K,L\subseteq I$, $(U_J \cap U_K)(U_J \cap U_L) = U_J\cap U_KU_L$. Note that the forward inclusion is trivial, hence we need only to prove the inverse inclusion.

    Consider $u\in U_J\cap U_K U_L$. We remind the reader that we can define a linear ordering on $\widehat{I}$ so that we can write uniquely each element of $U$ (see Section~\ref{sec:UTG_PG}). Consider then the ordering where $ \widehat{I_J}\setminus (\widehat{I_K}\cup \widehat{I_L})$ appear first, followed by elements in $\widehat{I_J} \cap (\widehat{I_K}\setminus \widehat{I_L})$, then elements in $\widehat{I_J}\cap \widehat{I_K}\cap \widehat{I_L}$, then elements from $\widehat{I_J}\cap (\widehat{I_L}\setminus \widehat{I_K})$, and finally elements of $\widehat{I}\setminus \widehat{I_J}$.
    Note that the first four sets are a partition of $\widehat{I_J}$, and that the five sets give a partition of $\widehat{I}$, hence we cover all possible elements of $\widehat{I}$.
    Additionally, even though we defined an ordering between these sets, we have not defined an ordering inside each of these. However, as any given linear ordering gives a unique word that describe each element of $U$, inside each of the above sets take a random linear ordering (the proof given here is independent on ordering chosen).
    Hence, consider $O_{\widehat{I}} = A \cup B \cup C \cup D \cup E$, where $A=O_{\widehat{I_J}\setminus(\widehat{I_K}\cup \widehat{I_L})}$, $B = O_{\widehat{I_J} \cap (\widehat{I_K}\setminus \widehat{I_L})}$, $C = O_{\widehat{I_J}\cap \widehat{I_K} \cap \widehat{I_L}}$, $D = O_{\widehat{I_J} \cap (\widehat{I_L}\setminus \widehat{I_K})}$, and $E = O_{\widehat{I}\setminus \widehat{I_J}}$.
    We can uniquely write any element $u\in U$ as the word
    $$ u = \left(\prod_{(i,j)\in A} e_{i,j}(t_{i,j}) \right) \left(\prod_{(i,j)\in B} e_{i,j}(t_{i,j}) \right) \left(\prod_{(i,j)\in C} e_{i,j}(t_{i,j}) \right) \left(\prod_{(i,j)\in D} e_{i,j}(t_{i,j}) \right) \left(\prod_{(i,j)\in E} e_{i,j}(t_{i,j}) \right),$$
    with $t_{i,j}\in \FF_q$ for $(i,j)\in \widehat{I}$.
    As $u\in U_J$ and since $U_J = UT_{\widehat{I_J}}(\FF_q)$, $u$ can be equally written with an ordering on $I_J$. Notice that $A\cup B\cup C\cup D$ already give a linear ordering of $\widehat{I_J}$.
    Therefore, the linear ordering of $U$ extend naturally to an ordering of $U_J$.
    As the elements of $E$ are not in $\widehat{I_J}$, in the word of $u$ we will have $t_{i,j}= 0$ for all $(i,j)\in E$.
    Additionally, as $u\in U_K U_L$, it can be written as a word with elements in $\widehat{I_K}$ and $\widehat{I_L}$. As $A$ is a linear ordering of elements $\widehat{I_J}\setminus (\widehat{I_K}\cup \widehat{I_L})$, we have that $u$ will have $t_{i,j}=0$ also for all $(i,j)\in A$.
    In the end, we have that a word for $u\in U_J\cap U_K U_L$ can be uniquely written with this ordering as
    $$ u = \left(\prod_{(i,j)\in B} e_{i,j}(t_{i,j}) \right) \left(\prod_{(i,j)\in C} e_{i,j}(t_{i,j}) \right) \left(\prod_{(i,j)\in D} e_{i,j}(t_{i,j}) \right).$$
    We have that  $\left(\prod_{(i,j)\in B} e_{i,j}(t_{i,j}) \right) \in U_J\cap U_K$ and 
    $\left(\prod_{(i,j)\in C} e_{i,j}(t_{i,j}) \right) \left(\prod_{(i,j)\in D} e_{i,j}(t_{i,j}) \right) \in U_J\cap U_L$. Additionally, note that 
    $\left(\prod_{(i,j)\in C} e_{i,j}(t_{i,j}) \right)$ belongs to both $U_J\cap U_K$ and $U_J\cap U_L$. Hence, $\left(\prod_{(i,j)\in C} e_{i,j}(t_{i,j}) \right) \in (U_J\cap U_K)(U_J\cap U_L)$, which means that  $u\in (U_J\cap U_K)(U_J\cap U_L)$, as wanted.
\end{proof}

Indeed, due to Propositions~\ref{prop:Cosets_representative} and~\ref{prop:Pattern_FT}, the set of flags of type $J\subseteq I$ $\UU\TT_{\widehat{I}}(n,q)$ are in bijection with a particular set of matrices 

\begin{coro}
    There is a bijection between flags of type $J\subseteq I$ and the set of matrices matrices $M_J = \{ m\in UT_{\widehat{I}}(n,q) \mid m_{i,j} = 0, \forall (i,j) \in \widehat{I_J}\}$.
\end{coro}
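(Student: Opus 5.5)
The plan is to go through the coset $gU_J$, where $U_J=\bigcap_{(i,j)\in J}U_{(i,j)}$ is the parabolic subgroup, which by Lemma~\ref{lem:patter_U_K} is the pattern group $UT_{\widehat{I_J}}(n,q)$. We will show two things: flags of type $J$ correspond bijectively to left cosets of $U_J$ in $U$, and each such coset contains exactly one matrix of $M_J$.

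First, flags of type $J$ are in bijection with $U/U_J$. By Proposition~\ref{prop:Pattern_FT}, $U$ acts flag-transitively on $\UU\TT_{\widehat{I}}(n,q)$, so the flags of type $J$ form a single $U$-orbit. The base flag is $F_J=\{U_{(i,j)}\mid (i,j)\in J\}$. Its stabiliser is $\bigcap_{(i,j)\in J}U_{(i,j)}=U_J$, because $gU_{(i,j)}=U_{(i,j)}$ holds if and only if $g\in U_{(i,j)}$.

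By the orbit--stabiliser correspondence, $gU_J\mapsto gF_J=\{gU_{(i,j)}\mid (i,j)\in J\}$ is a well-defined bijection from $U/U_J$ onto the flags of type $J$. For $J=\emptyset$ there is one empty flag, and correspondingly $U_\emptyset=U$. Then $M_\emptyset=\{\mathbf{1}\}$, under the convention that only off-diagonal entries in $\widehat{I_\emptyset}=\widehat{I}$ are constrained to be zero. So this case is consistent.

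Second, the map $gU_J\mapsto$ the unique representative is a bijection onto $M_J$. We apply Proposition~\ref{prop:Cosets_representative} with $\widehat{K}=\widehat{I_J}\subseteq\widehat{I}$; this is legitimate because $\widehat{I_J}$ is a closed set, being an intersection of closed sets. That proposition gives, for every $g\in U$, a unique $h\in gU_J$ with $h_{i,j}=0$ for all $(i,j)\in\widehat{I_J}$. Hence $h\in M_J$, and $\phi(gU_J):=h$ is well defined.

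For injectivity, distinct cosets are disjoint, so they yield distinct representatives. For surjectivity, any $m\in M_J$ lies in $mU_J$ and already satisfies the vanishing condition. By uniqueness, $\phi(mU_J)=m$.

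Composing the two bijections gives the corollary. As a sanity check, $|M_J|=q^{|\widehat{I}|-|\widehat{I_J}|}=[U:U_J]$.

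The only delicate point is the first step. The stabiliser of a flag must equal the intersection of the parabolics, and flag-transitivity is needed so that every flag of type $J$ arises as a translate $gF_J$. Otherwise, pairwise-incident cosets need not share a common element. Proposition~\ref{prop:Pattern_FT} supplies exactly this, so I expect no real obstacle beyond stating it carefully.
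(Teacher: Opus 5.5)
Your proposal is correct and follows the paper's proof. You use flag-transitivity to identify flags of type $J$ with cosets of $U_J=UT_{\widehat{I_J}}(n,q)$, then use Proposition~\ref{prop:Cosets_representative} to pick the unique representative of each coset lying in $M_J$. The only difference is that you prove surjectivity directly, whereas the paper relies on the count $|M_J|=[U:U_J]$.
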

\begin{proof}
    Since $U$ is flag-transitive on $\UU\TT_{\widehat{I}}(n,q)$, for a given type $J\subseteq I$, we have that each flag of type $\UU\TT_{\widehat{I}}(n,q)$ can be represented as a coset of $U_J$ (for more details, see~\cite[Proposition 2.2]{Piedade2026}). Since $U_J \cong UT_{\widehat{I_J}}(n,q)$, by Proposition~\ref{prop:Cosets_representative}, for each flag represented by a coset $gU_J$, we can take the coset representative $g$ as a matrix $m$ such that $m_{i,j}=0$ for all $(i,j)\in \widehat{I_J}$. Finally, for the bijection, let $M_J = \{ m\in UT_{\widehat{I}}(n,q) \mid m_{i,j} = 0\; \forall (i,j) \in \widehat{I_J}\}$.
    Notice the size of $M_J$ is $q^{|\widehat{I}|-|\widehat{I_J}|}$ matrices in $M_J$, which is the exact number of cosets of $U_J$ in $U$ ($[U :U_J] = q^{|\widehat{I}|}/q^{|\widehat{I_J}|}$).
\end{proof}

\subsection{The Geometry of Pattern Groups}
 \begin{theorem}\label{thm:CG_of_Pattern}
    Consider a closed set $\widehat{I}\subseteq \Delta_n$, with covering relations $I$, and $q=p^m$, with $p$ prime and $m\geq 1$.
    Let $U:=UT_{\widehat{I}}(n,\FF_q)$ be the pattern group associated with $\widehat{I}$, and consider the coset incidence system $\UU\TT_{\widehat{I}}(n,q)=(U,(U_{(i,j)})_{(i,j)\in I})$ where $U_{(i,j)} := \langle e_{k,l}(t) \mid t\in T, (k,l)\in I\setminus\{(i,j)\} \rangle$. Then,
    \begin{enumerate}
        \item $U$ acts flag-transitively on $\UU\TT_{\widehat{I}}(n,q)$;
        \item $\UU\TT_{\widehat{I}}(n,q)$ is a residually connected coset geometry;
        \item $\UU\TT_{\widehat{I}}(n,q)$ is firm;
        \item $\UU\TT_{\widehat{I}}(n,q)$ is thin if and only if $q=2$ and $B\trianglelefteq U$;
        \item if $q\neq 2$ then $\UU\TT_{\widehat{I}}(n,q)$ is thick; moreover, if $B\trianglelefteq U$ and $\UU\TT_{\widehat{I}}(n,q)$ is thick, then $q\neq 2$;
        
    \end{enumerate}
\end{theorem}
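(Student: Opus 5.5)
Items (1) and (2) are essentially already proved, and the remaining items reduce to computing the indices $k_{(i,j)}=[U^{(i,j)}:B]$ and feeding them into Theorem~\ref{thm:CosetFIRM}.

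\textbf{Items (1) and (2).} Item (1) is exactly Proposition~\ref{prop:Pattern_FT}. For item (2), Proposition~\ref{prop:pattern_RC1} verifies condition (2) of Theorem~\ref{thm:CosetRC}. Together with flag-transitivity, that theorem gives that $\UU\TT_{\widehat{I}}(n,q)$ is a residually connected geometry. Theorem~\ref{thm:cosetFT} already guarantees that it is a geometry.

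\textbf{The index formula.} By Lemma~\ref{lem:patter_U_K}, both $U^{(i,j)}$ and $B$ are pattern groups, on $\widehat{I^{\{(i,j)\}}}:=\widehat{I_{I\setminus\{(i,j)\}}}$ and on $\widehat{I_I}$ respectively. Hence
$$k_{(i,j)}=q^{d_{(i,j)}},\qquad d_{(i,j)}=\bigl|\widehat{I^{\{(i,j)\}}}\setminus\widehat{I_I}\bigr|.$$
The covering relation $(i,j)$ lies in $\widehat{I^{\{(i,j)\}}}$ but not in $\widehat{I_I}$. The reason is that a covering relation has only the trivial chain, so it does not lie in $\widehat{I\setminus\{(i,j)\}}$. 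Therefore $d_{(i,j)}\geq 1$ and $k_{(i,j)}\ge q\ge 2$, and item (3) follows from Theorem~\ref{thm:CosetFIRM}(1). This also gives the first half of item (5): if $q\neq 2$ then $k_{(i,j)}\geq q\geq 3$ for every $(i,j)$, so the geometry is thick.

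\textbf{Key claim for item (4).} The main step is to show that, when $q=2$,
$$B\trianglelefteq U \iff d_{(i,j)}=1 \text{ for all } (i,j)\in I.$$

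For ($\Rightarrow$), take $(k,l)\in\widehat{I^{\{(i,j)\}}}\setminus\widehat{I_I}$. The proof of Proposition~\ref{prop:Patter_U_K_redefin} shows that every such $(k,l)$ satisfies $k\preccurlyeq i\prec j\preccurlyeq l$, and that $(k,i)$ and $(j,l)$ lie in $\widehat{I_I}$ whenever $k\neq i$, respectively $j\neq l$. If $k\neq i$, then $(k,i)\in\widehat{I_I}$ together with $k\preccurlyeq k$ and $i\preccurlyeq l$ gives $(k,l)\in\widehat{I_I}$ by condition (2) of Proposition~\ref{prop:Pattern_BorelNormal}, a contradiction. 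The case $j\neq l$ is symmetric. Hence $(k,l)=(i,j)$ and $d_{(i,j)}=1$. This direction does not use $q=2$.

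For ($\Leftarrow$), if $q=2$ and $d_{(i,j)}=1$, then $B$ has index $2$ in $U^{(i,j)}$ and is therefore normal in it. Each $U^{(i,j)}$ contains the generators $e_{i,j}(t)$, so these normalize $B$ for all $(i,j)\in I$. Since these generators generate $U$ (Proposition~\ref{prop:Pattern_mingenset}), $B\trianglelefteq U$.

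\textbf{Finishing items (4) and (5).} By Theorem~\ref{thm:CosetFIRM}(2), thinness is equivalent to $q^{d_{(i,j)}}=2$ for all $(i,j)$, that is, to $q=2$ and every $d_{(i,j)}=1$. By the key claim this is equivalent to $q=2$ and $B\trianglelefteq U$, which is item (4). For the second half of item (5), if $B\trianglelefteq U$ then every $d_{(i,j)}=1$ by the ($\Rightarrow$) direction, which holds for any $q$. So each $k_{(i,j)}=q$, and thickness forces $q>2$.

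\textbf{Main obstacle.} The delicate step is the ($\Rightarrow$) direction of the key claim. It relies on the precise chain description, extracted from the proof of Proposition~\ref{prop:Patter_U_K_redefin}, of the elements of $\widehat{I^{\{(i,j)\}}}$ that are not in $\widehat{I_I}$. I will re-derive that description carefully, including the edge cases $k=i$ and $j=l$.
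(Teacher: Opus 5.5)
Your proof is correct, but it is organized differently from the paper's. The paper never computes $[U^{(i,j)}:B]$ as a number. It gets firmness by noting that $B=U^{(i,j)}$ would put $e_{i,j}(t)$ into $U_{(i,j)}$, contradicting minimality of the generating set (Proposition~\ref{prop:Pattern_mingenset}). It gets thickness for $q\neq 2$ by exhibiting three distinct cosets of $B$ in $U^{(i,j)}$: $B$, $e_{i,j}(t)B$, $e_{i,j}(2t)B$ when $p\neq 2$, or $B$, $e_{i,j}(t)B$, $e_{i,j}(t')B$ when $m>1$. For the statements involving normality, it uses that once $B\trianglelefteq U$, Proposition~\ref{prop:Patter_U_K_redefin} gives $U^{(i,j)}\cong B\rtimes\langle e_{i,j}(t)\mid t\in T\rangle$, so the index is exactly $q$.

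You instead read off $[U^{(i,j)}:B]=q^{d_{(i,j)}}$ from the orders of the two pattern groups. This buys several things:
\begin{itemize}
\item firmness and the first half of (5) become immediate from $d_{(i,j)}\geq 1$;
\item thinness forces $q=2$ directly, without first establishing normality;
\item you get, as a bonus, the vertex orders $o_{(i,j)}=q^{d_{(i,j)}}-1$ of the Buekenhout diagram for every pattern geometry.
\end{itemize}

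The price is your direction $B\trianglelefteq U\Rightarrow d_{(i,j)}=1$. You route it through the combinatorial criterion of Proposition~\ref{prop:Pattern_BorelNormal} and the chain analysis from the proof of Proposition~\ref{prop:Patter_U_K_redefin}. This is valid, and the case $B=\langle\mathbf{1}\rangle$ is harmless: there $(k,i)\in\widehat{I_I}$ is impossible, so $k=i$ and $j=l$ at once. However, the paper's semidirect-product observation reaches the same index $q$ in one line. The converse step (index $2$ in every $U^{(i,j)}$ forces $B\trianglelefteq U$ because these subgroups contain the generators of $U$) is essentially the same in both arguments.
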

\begin{proof}
    Items $(1)$ and $(2)$ are a direct consequence of Proposition~\ref{prop:Pattern_FT}, Proposition~\ref{prop:pattern_RC1} and Theorem~\ref{thm:CosetRC}.
    
    (3) Let us prove now that $\UU\TT_{\widehat{I}}(n,q)$ is firm.
    Suppose otherwise that $B=U^{(i,j)}$ for some $(i,j)\in J$. Note that $e_{i,j}(t)\in U^{(i,j)}$, for all $t\in T$, since $(i,j)\in \widehat{I^{\{(i,j)\}}}$. That would mean that, for all $t\in T$, $e_{i,j}(t)\in B$. However, since $B\subseteq U_{(i,j)}$, that would mean that $e_{i,j}(t)\in U_{(i,j)}$. Hence, $U_{(i,j)}=U$, which would contradict the fact that $\{e_{r,s}(t)\mid t\in T, (r,s)\in I\}$ is a minimal generating set of $U=UT_{\widehat{I}}(n,\FF_q)$ (Proposition~\ref{prop:Pattern_mingenset}). Hence, $\UU\TT_{\widehat{I}}(n,q)$ is firm.

    (4)
    First, suppose that $\UU\TT_{\widehat{I}}(n,q)$ is thin. Since $U$ is flag-transitive in $\UU\TT_{\widehat{I}}(n,q)$, we know that $|U^{(i,j)}:B|=2$ for all $(i,j)\in I$, i.e. $B$ is a normal subgroup for each minimal parabolic subgroup.
    By residual connectedness, we have that the subgroups $U^{(i,j)}=\langle e_{i,j}(t), B\mid t\in T\rangle$, $(i,j)\in I$, generate $U$, i.e. $U = \langle U^{(i,j)}\mid (i,j)\in I\rangle = \langle e_{i,j}(t), B\mid t\in T, (i,j)\in I\rangle$.
    Hence, since $B$ is a normal subgroup of every minimal parabolic subgroup $U^{(i,j)}$, then $B$ is also a normal subgroup of $U$.
    Additionally, as none of the covering relations $(i,j)\in I$ is in $\widehat{I_I}$, we have that for each $(i,j)\in I$, $\langle e_{i,j}(t)\mid t\in T\rangle\cap B$ is trivial. Therefore, $U^{(i,j)} \cong B\rtimes \langle e_{i,j}(t)\mid t\in T\rangle$, where $\langle e_{i,j}(t)|t\in T\rangle \cong (\ZZ/p\ZZ)^m$.
    With this, the only way $U^{(i,j)}/B$ has size two is if and only if $(\ZZ/p\ZZ)^m$ has size two, meaning $p=2$ and $m=1$.
    
    Consider now that $B\trianglelefteq U$ and $q=2$. Then, by Proposition~\ref{prop:Patter_U_K_redefin}, for all $(i,j)\in I$, we have $B\trianglelefteq U^{(i,j)}$. As before, we have that $U^{(i,j)} \cong B\rtimes \langle e_{i,j}(t)\mid t\in T\rangle$ and, since $q=2$, we have that $|U^{(i,j)}: B|=2$.
    
    (5)  
    Suppose that $q\neq 2$, meaning that either $p\neq 2$ or $m\neq 1$. Let $(i,j)\in I$. Note that, since $(i,j)$ is a covering relations, we know that $(i,j)\notin \widehat{I_I}$. As $B=\langle e_{k,l}(t)\mid t\in T, (k,l)\in \widehat{I_I}\rangle$, this is the pattern group over the closed set $\widehat{I_I}$, and therefore the entry $(i,j)$ must always be zero.
    
    Assume first that $p\neq 2$ and fix $t\in T$. Let us prove that $B$, $e_{i,j}(t)B$ and $e_{i,j}^2(t)B$ are distinct cosets, for $(i,j)\in I$.  Hence, we have trivially that $B \neq e_{i,j}(t)B$ and $e_{i,j}(t)B \neq e_{i,j}^2(t)B$, as otherwise $e_{i,j}(t)\in B$, a contradiction. Also, $B \neq e_{i,j}^2(t)B$, as that also implies that $e^2_{i,j}(t)=e_{i,j}(2t)\in B$. Taking $2^{-1}\in \ZZ_p$, we have $e^{2^{-1}}_{i,j}(2t)\in B$, meaning $e_{i,j}(t)\in B$, a contradiction. Hence, for this case, we have $|U^{(i,j)}:B|>2$  for every $(i,j)\in I$.
    Now, assume that $m\neq 1$ and, therefore, $|T|>1$. Choose distinct generators of the $\FF_p$-basis $t,t'\in T$. To prove that $B$, $e_{i,j}(t)B$ and $e_{i,j}(t')B$ are distinct cosets, notice that $B\neq e_{i,j}(t)B$ and $B \neq e_{i,j}(t')B$, as that would imply that either $e_{i,j}(t)$ or $e_{i,j}(t')$ are in $B$, a contradiction. Also, $e_{i,j}(t)B \neq e_{i,j}(t')B$, as that would imply that $e_{i,j}(t-t')\in B$, i.e. there would be a matrix with non-zero $(i,j)$-entry in $B$, a contradiction.
    Again, we have that $|U^{(i,j)}:B|>2$.
    Hence, since $U$ is flag-transitive in $\UU\TT_{\widehat{I}}(n,q)$, by Theorem~\ref{thm:CosetFIRM} then $\UU\TT_{\widehat{I}}(n,q)$ is thick.
    
    Now, let $B\trianglelefteq U$. Suppose that $\UU\TT_{\widehat{I}}(n,q)$ is thick. Then, for any $(i,j)\in I$, $|U^{(i,j)}:B|>2$. For a contradiction, suppose that $q=2$. This implies that all $e_{i,j}^2(t)=\textbf{1}$ and that
    all minimal parabolic subgroups are $U^{(i,j)}=\langle e_{i,j}(t), B\rangle$, for a fixed $t$ such that $T=\{t\}$. Notice that since $B$ is normal in $U$, we have that $U^{(i,j)}/B \cong (\ZZ/2\ZZ)$. However, this would imply that $|U^{(i,j)}:B|=2$, a contradiction with the fact that it is thick. Hence, either $p\neq 2$ or $m\neq 1$.

\end{proof}

Notice that requiring that $q=2$ is not enough to given thinness. Indeed, take the case where $n=5$ and $I=\{ (1, 2 ), ( 3, 4 ), ( 4, 5 ), ( 1, 3 ), ( 2, 4 )\}$.
We can calculate that $B=\langle e_{1,4}(1)\rangle$, a subgroup of order $2$, and $U^{(4,5)}=\langle e_{4,5}(1),e_{1,4}(1)\rangle$.
By Proposition~\ref{prop:gens_props}(3), we have that $e_{1,5}(1)\in U^{(4,5)}$. Indeed, we can calculate that $|U^{(4,5)}|=8$, and therefore $|U^{(4,5)}:B|=8/2 = 4$, failing thinness by Theorem~\ref{thm:CosetFIRM}.

\begin{coro}\label{coro:Pattern_with_q=2_hyper}
    $\UU\TT_{\widehat{I}}(n,q)$ is a regular hypertope if and only if $q=2$ and $B\trianglelefteq UT_{\widehat{I}}(n,\FF_q)$.
\end{coro}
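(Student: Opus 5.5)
The plan is to unwind the definition of a regular hypertope and read each requirement off Theorem~\ref{thm:CG_of_Pattern}. A regular hypertope is a thin, residually connected, flag-transitive incidence geometry. By items (1) and (2) of Theorem~\ref{thm:CG_of_Pattern}, $\UU\TT_{\widehat{I}}(n,q)$ is always a residually connected coset geometry on which $U$ acts flag-transitively, for every closed set $\widehat{I}$ and every $q$. Since $U$ acts on the geometry through a homomorphism into $\Aut(\UU\TT_{\widehat{I}}(n,q))$, the image of $U$ is a flag-transitive subgroup. Hence $\Aut(\UU\TT_{\widehat{I}}(n,q))$ is flag-transitive as well, and this matches the convention stated after the definition of flag-transitivity. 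So the only condition that depends on $q$ and $\widehat{I}$ is thinness.

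The proof then reduces to item (4) of Theorem~\ref{thm:CG_of_Pattern}, which says that $\UU\TT_{\widehat{I}}(n,q)$ is thin if and only if $q=2$ and $B\trianglelefteq U$.
\begin{itemize}
\item \emph{Forward direction.} If the geometry is a regular hypertope, it is in particular thin, so item (4) gives $q=2$ and $B\trianglelefteq U$.
\item \emph{Reverse direction.} If $q=2$ and $B\trianglelefteq U$, item (4) gives thinness. Combining this with items (1) and (2) yields all three defining properties of a regular hypertope.
\end{itemize}

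The only step needing care is checking that the definition of regular hypertope used in the paper matches exactly these three properties. In particular, no separate firmness or simple-transitivity condition is demanded beyond thinness. Firmness is in any case automatic by item (3), and thinness implies firmness. If one also wanted $U$ to act regularly (with trivial chamber stabiliser), this would require $B=\langle\mathbf{1}\rangle$. Theorem~\ref{thm:CGQuotient} handles this: when $B\trianglelefteq U$ the kernel of the action is $B$, so $U/B$ acts regularly on the same geometry. I would mention this remark but not rely on it, since the paper's definition of regular hypertope only asks for flag-transitivity.
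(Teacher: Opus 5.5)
Your proposal is correct and takes essentially the same approach as the paper. The paper's proof also unwinds the definition of a regular hypertope (thin, residually connected, flag-transitive), taking flag-transitivity and residual connectedness from items (1)--(2) of Theorem~\ref{thm:CG_of_Pattern} and thinness from item (4); you are just more explicit about both directions and about $U$'s flag-transitivity implying that of $\Aut(\UU\TT_{\widehat{I}}(n,q))$.
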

\begin{proof}
This follows from the definition of a regular hypertope, and the fact that $\UU\TT_{\widehat{I}}(n,q)$ is a residually connected thin coset geometry, with $U$ acting flag-transitively on $\UU\TT_{\widehat{I}}(n,q)$ when $q=2$ and $B\trianglelefteq UT_{\widehat{J}}(\FF_q)$.
\end{proof}

Notice that, in the above result, we are not requiring $B$ to be trivial. Indeed, in regular hypertopes, the Borel subgroup (i.e. the stabilizer of a chamber) is taken to be trivial. In the case where $B\neq \langle \textbf{1}\rangle$ (and hence, in the regular hypertope case, it is a normal subgroup of $UT_{\widehat{I}}(n,\FF_q)$), the coset geometry $\UU\TT_{\widehat{I}}(n,q) \cong (U/B, ( U_{(i,j)}/B)_{(i,j)\in I})$, following Theorem~\ref{thm:CGQuotient}.

\section{Unitriangular geometry}\label{sec:UniTriaGeom}

In the previous section, we have looked at subgroups of the unitriangular group $UT(n,\FF_q)$ by taking closed subsets of $\Delta_n$.
In fact, the unitriangular group $UT(n,\FF_q)$ is exactly the pattern group obtained by taking the closed set to be $\widehat{I}=\Delta_n$. Hence, many of the properties of the geometry associated with this group, $\UU\TT_{\Delta_n}(n,q)$ have been given in the previous section. 
However, since we are using $\widehat{I}=\Delta_n$, some nice properties arise that are worth studying.

Let us denote $\UU\TT(n,q):=\UU\TT_{\Delta_n}(n,q)$.
When $\widehat{I}=\Delta_n$, the set of covering relations is $I = \{(1,2),(2,3),$ $\ldots,(n-1,n)\}$, meaning that $\PP_{\widehat{I}}$ is a total order, or equivalently, its Hasse diagram is a linear graph.
Since all elements of $I$ are of the form $(i,i+1)$, with $1\leq i\leq n-1$, the maximal parabolic subgroups of $\UU\TT(n,q)$ are $U_{(i,i+1)}=\langle e_{j,j+1}(t)\mid t\in T, (j,j+1)\in I\setminus\{(i,i+1)\}\rangle$.
To simplify notation, consider $e_i(t):= e_{i,i+1}(t)$ and $U_i := U_{(i,i+1)}$ which, in turn, makes it possible for us to simplify $I=\{1,2,\ldots,n-1\}$.
Therefore, given $U:= UT(n,\FF_q)$, we can simplify $\UU\TT(n,q)$ as the coset geometry $(U,(U_i)_{i\in I})$. From Theorem~\ref{thm:CG_of_Pattern}, we have that $\UU\TT(n,q)$ is a firm, residually connected coset geometry with $U$ acts flag-transitively.

Another direct result we can obtain from pattern groups is that the set of generators of $UT(n,\FF_q)$ always satisfies the intersection property on a partition defined by $\{1,\ldots,n-1\}$.

\begin{proposition}\label{prop:UniT_IP}
Let $K,L\subseteq I$. Then $$\langle e_{i}(t)\mid t\in T, i\in K \rangle \cap \langle e_{i}(t)\mid t\in T, i\in L \rangle = \langle e_{i}(t)\mid t\in T, i\in K\cap L \rangle.$$
\end{proposition}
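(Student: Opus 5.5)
The plan is to obtain this as a direct consequence of Proposition~\ref{prop:Pattern_B=1equiv}, applied with $\widehat{I}=\Delta_n$. Its equivalence $(1)\Leftrightarrow(3)$ says that $U$ has the intersection property for the partition $P=\{S_{(i,i+1)}\mid 1\le i\le n-1\}$, with $S_{(i,i+1)}=\{e_{i}(t)\mid t\in T\}$, exactly when every $(k,l)\in\widehat{I}$ satisfies $|C_{(k,l)}|=1$. The statement of Proposition~\ref{prop:UniT_IP} is precisely this intersection property, written in the simplified notation $e_i(t)=e_{i,i+1}(t)$ with $I=\{1,\dots,n-1\}$. So it suffices to check the chain condition.

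First I will record that the covering relations of $\PP_{\Delta_n}$ are exactly $I=\{(i,i+1)\mid 1\le i\le n-1\}$. Indeed, $\Delta_n$ induces the usual total order on $\{1,\dots,n\}$. If $l>k+1$, then $(k,l)$ is not a covering relation, since $k\prec k+1\prec l$.

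Next I will show $|C_{(k,l)}|=1$ for every $(k,l)\in\Delta_n$. Let $(c_1,\dots,c_r)$ be a $(k,l)$-chain. Each consecutive pair $(c_s,c_{s+1})$ lies in $I$, so $c_{s+1}=c_s+1$. Since $c_1=k$, induction gives $c_s=k+s-1$. The condition $c_r=l$ then forces $r=l-k+1$. Hence the only $(k,l)$-chain is $(k,k+1,\dots,l)$, and Proposition~\ref{prop:Pattern_B=1equiv} gives the statement.

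There is no real obstacle, as long as the earlier proposition is taken as established. The one point needing care is its case $J\cap K=\emptyset$: there the intersection should be $\langle\mathbf{1}\rangle$, and this is covered by item $(4)$, $B=\langle\mathbf{1}\rangle$, of the same proposition.

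As an independent sanity check I will also sketch a direct argument. By Lemma~\ref{lem:patter_U_K}, $\langle e_i(t)\mid t\in T, i\in K\rangle$ is the pattern group on $\widehat{K'}$, where $K'=\{(i,i+1)\mid i\in K\}$. Here $\widehat{K'}$ consists of the pairs $(k,l)$ such that $k,k+1,\dots,l-1$ all lie in $K$. A pattern group is determined entrywise, as its elements are the unitriangular matrices supported on the closed set. The intersection of two pattern groups is therefore the pattern group on $\widehat{K'}\cap\widehat{L'}$. This intersection equals $\widehat{(K\cap L)'}$, since in both cases the condition is that every index in $\{k,\dots,l-1\}$ lies in $K\cap L$.
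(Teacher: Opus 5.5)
Your proposal is correct and follows the paper's proof: specialize Proposition~\ref{prop:Pattern_B=1equiv} to $\widehat{I}=\Delta_n$ and use that the total order has a unique $(k,l)$-chain, a fact you justify explicitly where the paper only asserts it. Your entrywise sanity check (an intersection of pattern groups is the pattern group on the intersection of the closed sets, and $\widehat{K'}\cap\widehat{L'}=\widehat{(K\cap L)'}$) is also sound and bypasses the heavier equivalence entirely; just note that identifying $\langle e_i(t)\mid t\in T, i\in K\rangle$ with the pattern group on $\widehat{K'}$ really comes from Proposition~\ref{prop:Pattern_mingenset} applied to $\widehat{K'}$ (whose covering relations are exactly $K'$), not from Lemma~\ref{lem:patter_U_K}.
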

\begin{proof}
    First, note that the poset $\PP_{\Delta_n}$ defined by the covering relations $\{(1,2),(2,3),\ldots,(n-1,n)\}$ is a total order. This means that, for any $i,j\in \{1,\ldots,n\}$, such that $i\prec j$, we have that $|C_{(i,j)}|=1$. By Proposition~\ref{prop:Pattern_B=1equiv}, we get the result of the statement of the proposition.
\end{proof}

Hence, $UT(n,\FF_q)$ satisfies the generalized intersection property with partition $S = \{S_i\mid i\in I\}$ where $S_i = \{e_i(t)\mid t\in T\}$.

By Proposition~\ref{prop:gens_props}(1), we have that the generators $e_i(t)$ and $e_j(t')$ commute if $|i-j|>1$. Due to this and the total order of the poset, we can say much more about the structural characteristics of the $\UU\TT(n,q)$ geometries. Our goal in this section is to prove that the Buekenhout diagram associated with $\UU\TT(n,q)$ is linear and describe each consecutive rank 2 residues. In particular, we will prove that, whenever $q = 2$, $\UU\TT(n,q)$ gives a new infinite family of abstract regular polytopes.

\subsection{Buekenhout diagram}
First, let us prove that the Buekenhout diagram of $\UU\TT(n,q)$ is indeed linear.

\begin{proposition}\label{prop:UTnq_lin_diag}
    Let $\DD$ be the Buekenhout diagram of $\UU\TT(n,q)$. Then, $\DD$ is a linear diagram.
\end{proposition}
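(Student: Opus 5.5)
To show that $\DD$ is linear, I will prove that for any $i,j\in I=\{1,\ldots,n-1\}$ with $|i-j|>1$, every residue of type $\{i,j\}$ is a generalized digon. Since $U$ is flag-transitive (Theorem~\ref{thm:CG_of_Pattern}), all residues of type $\{i,j\}$ are isomorphic, so it suffices to examine one: the residue of the standard flag of type $I\setminus\{i,j\}$. This residue is the coset geometry $(U_{I\setminus\{i,j\}},(U_{I\setminus\{j\}},U_{I\setminus\{i\}}))$. It is firm, residually connected and flag-transitive as a residue of a geometry with those properties, so Proposition~\ref{prop:rank2_passini}(1) applies. It therefore suffices to show that
\[
U^iU^j=U^jU^i\ (=U^{\{i,j\}}).
\]

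By Proposition~\ref{prop:UniT_IP} together with Proposition~\ref{prop:Pattern_B=1equiv}, the Borel subgroup is trivial and every parabolic subgroup is generated by the corresponding generators. Concretely, $U^i=\langle e_i(t)\mid t\in T\rangle$ and $U^{\{i,j\}}=U_{I\setminus\{i,j\}}=\langle e_i(t),e_j(t)\mid t\in T\rangle$. By Proposition~\ref{prop:gens_props}(1), $e_{i,i+1}(t)$ and $e_{j,j+1}(t')$ commute whenever $i\neq j+1$ and $i+1\neq j$, which holds because $|i-j|>1$. Hence every element of $U^i$ commutes with every element of $U^j$. It follows that $U^iU^j$ is a subgroup equal to $U^jU^i$, and since it contains both generating subgroups, it equals $U^{\{i,j\}}$. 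This gives the digon condition, so there is no edge between $i$ and $j$ in $\DD$.

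Linearity also requires that consecutive nodes $i,i+1$ are joined by an edge, i.e.\ that the corresponding residue is not a digon; otherwise the diagram would be disconnected rather than a path. For this step I will show that $U^iU^{i+1}\neq U^{\{i,i+1\}}$. The subgroup $U^{\{i,i+1\}}$ contains $e_{i,i+2}(tt')$ by Proposition~\ref{prop:gens_props}(3), so its order is $q^3$, whereas $|U^iU^{i+1}|\le |U^i||U^{i+1}|=q^2$. The product is therefore a proper subset, and by Proposition~\ref{prop:rank2_passini}(1) the residue is not a generalized digon.

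I expect the main obstacle to be the bookkeeping that identifies the rank-two residue with this coset geometry, and that confirms the hypotheses of Proposition~\ref{prop:rank2_passini} are inherited by residues. Both of these facts are standard for flag-transitive residually connected coset geometries. The group computations themselves are immediate.
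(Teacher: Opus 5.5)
Your proof is correct and follows the same route as the paper: commuting generators give $U^iU^j=U^jU^i=U^{\{i,j\}}$ for $|i-j|>1$, so those residues are generalized digons by Proposition~\ref{prop:rank2_passini}(1), while residues of consecutive types are shown not to be digons. The only difference is in that last step. You use the order count $|U^iU^{i+1}|\le q^2<q^3=|U^{\{i,i+1\}}|$, whereas the paper does an explicit matrix computation showing $U^i\cap e_{i+1}(1)e_i(1)U^{i+1}=\emptyset$ (equivalently $e_{i+1}(1)e_i(1)\notin U^iU^{i+1}$). Your count is a little cleaner, while the paper's version exhibits a concrete non-incident pair.
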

\begin{proof}
Let $\DD$ be the diagram of the coset geometry $\UU\TT(n,q)$. For simplicity, we will denote $\UU\TT(n,q)$ as $\UU$. 
First, note that there is a natural ordering of $I$, i.e. $I= \{1,2,\ldots,n-1\}$.
By Definition~\ref{def:BuekenhoutDiag}, we have that two types $i,j\in I$ are not connected by an edge if the residue of type $\{i,j\}$ is a generalized digon. In order to prove linearity of the diagram $D$, we will prove that, whenever $\{i,j\}$ are not consecutive types, the residue of type $\{i,j\}$ is a generalized digon, while whenever $\{i,j\}$ are consecutive types, these will be connected in $D$. Given the flag $\{U_k, k\in I\setminus\{i,j\}\}$ of $\UU$, we have that the residue of this flag is $\UU_{i,j} :=(U^{\{i,j\}},(U^i,U^j))$. By the intersection property proved above, we have that $U^i=\langle e_{i}(t)\mid t\in T\rangle$, $U^j=\langle e_{j}(t)\mid t\in T\rangle$ and $U^{\{i,j\}}=\langle e_{k}(t)\mid t\in T, k\in\{i,j\}\rangle$.

Let $i,j\in I$ such that $|i-j|>1$.  
By Proposition~\ref{prop:gens_props}(1), we have that, for any $\alpha,\beta\in \FF_q$, $e_{i}(\alpha)$ commutes with $e_{j}(\beta)$. Therefore, any $g\in U^{\{i,j\}}$ can be written as $g=e_{i}(\alpha)e_j(\beta)=e_j(\beta)e_{i}(\alpha)$, with $\alpha,\beta\in \FF_q$. Hence, since $U^{\{i,j\}} = U^i U^j=U^j U^i$, by Proposition~\ref{prop:rank2_passini}(1) we have that $\UU_{i,j}$ is a generalized digon.

Consider now the case where $|i-j|=1$ and, without loss of generality, let $j=i+1$ with $i\in \{1,\ldots,n-2\}$. 
We will now prove that $\UU_{i,i+1}=(U^{\{i,i+1\}},(U^i,U^{i+1}))$ is not a generalized digon. From Proposition~\ref{prop:gens_props}(1),  we have $e_{i}(t)e_{i+1}(t')\neq e_{i+1}(t')e_{i}(t)$, for any $\alpha,\beta\in \FF_q\setminus\{0\}$.
Hence, contrary to the case above, consecutive generators do not commute. 
We remind also the reader that $o(e_i(t))=p$, for any $t\in T$.
Let $T=\{t_1,\ldots,t_m\}$. 
We will prove now that the subgroups $U^i \cap e_{i+1}(1)e_{i}(1)U^{i+1}=\emptyset$.
For a contradiction, suppose there is an element of $U^i$ that belongs to $e_{i+1}(1)e_{i}(1)U^{i+1}$, i.e. $\exists (k_1,\ldots,k_m),(l_1,\ldots,l_m)\in (\mathbb{N}_0)^m$ such that $0\leq k_j,l_j<p$ and 
$$\prod _{j=1}^m e_{i}(t_j)^{k_j} =e_{i+1}(1)e_{i}(1)\left(\prod _{j=1}^m e_{i+1}(t_j)^{l_j}\right)$$
 which, by Proposition~\ref{prop:gens_props}(2) and (4), is equivalent to 
\begin{equation}\label{eq:lin_diag_eq} e_{i}\left(\sum_{j=1}^mk_jt_j\right)e_{i+1}\left(-\sum _{j=1}^ml_jt_j\right)=e_{i+1}(1)e_{i}(1).  
\end{equation}
Let us denote $\overline{k} = \sum_{j=1}^mk_jt_j$ and $\overline{l}=\sum _{j=1}^ml_jt_j$ and 
recall that we can write $e_i(t)=\mathbf{1}+f_{i,i+1}(t)$.
Therefore, we have that 
$$
e_{i}(\overline{k})e_{i+1}(-\overline{l})=\mathbf{1}+f_{i,i+1}(\overline{k})+f_{i+1,i+2}(-\overline{l})
+f_{i,i+2}(-\overline{k}\ \overline{l})
$$
which, by Equation~\ref{eq:lin_diag_eq}, is equal to
$$
e_{i+1}(1)e_{i}(1) =\mathbf{1}+f_{i+1,i+2}\left(1\right)+f_{i,i+1}\left(1\right).
$$
Therefore, this means that $\overline{k}=1=-\overline{l}$
and $-\overline{k}\ \overline{l}=0$ which is a contradiction. Hence, $U^i\cap e_{i+1}(1)e_{i}(1)U^{i+1}=\emptyset$, meaning that there is an element of type $i$ that is not incident with one element of type $i+1$, i.e. $\mathcal{U}_{i,i+1}$ is not a generalized digon.

Both these results imply that $\DD$, with vertex set $I$ in the natural order $\{1,\ldots,n-1\}$, is a linear Buekenhout diagram.

\end{proof}

To fully describe the Buekenhout diagram, we still need to describe every rank 2 residue of type $\{i,i+1\}$ which are not generalized digons. In the next proposition, we will determine the $i$-diameter $d_{i,i+1}$, the $i+1$-diameter $d_{i+1,i}$, and the gonality $g_{i,i+1}$ of these rank $2$ residues.

\begin{proposition}\label{prop:diag_props}
    For $i\in I$ we have that:
    \begin{enumerate}
        \item $ U^{[i,i+1]}_4 = U^{[i+1,i]}_4$;
        \item $U^{[i,i+1]}_3 \neq U^{[i,i+1]}_4$ and $U^{[i+1,i]}_3 \neq U^{[i+1,i]}_4$.
        \item Let $f$ be the minimal positive integer such that $U^{[i,i+1]}_f\cap U^{[i+1,i]}_f\neq U^{[i,i+1]}_{f-1}\cup U^{[i+1,i]}_{f-1}$; if $q\neq 2$ then $f=3$, otherwise, $f=4$.
    \end{enumerate}
\end{proposition}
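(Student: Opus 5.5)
The plan is to reduce the whole computation to an explicit copy of $UT(3,q)$ and then describe the four product sets $U^{[i,i+1]}_f$ and $U^{[i+1,i]}_f$, for $f\le 4$, as concrete subsets of $\FF_q^3$.

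\textbf{Step 1: reduce to $UT(3,q)$.} By Proposition~\ref{prop:UniT_IP} (through Proposition~\ref{prop:Pattern_B=1equiv}) we have $B=\langle\mathbf{1}\rangle$, $U^i=\{e_i(a)\mid a\in\FF_q\}$ and $U^{i+1}=\{e_{i+1}(b)\mid b\in\FF_q\}$. By Proposition~\ref{prop:gens_props}(3), $U^{\{i,i+1\}}$ is the group of matrices
$$M(a,b,c)=\mathbf{1}+f_{i,i+1}(a)+f_{i+1,i+2}(b)+f_{i,i+2}(c),\qquad a,b,c\in\FF_q .$$
This group is a copy of $UT(3,q)$ of order $q^3$, and its multiplication rule is
$$M(a,b,c)M(a',b',c')=M(a+a',\,b+b',\,c+c'+ab').$$
Every product set below is computed with this rule, using $f_{i,j}f_{k,l}=\delta_{j,k}f_{i,l}$ as in the proof of Proposition~\ref{prop:orderproductgens}.

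\textbf{Step 2: compute the product sets.} Directly from the rule:
\begin{itemize}
\item $U^iU^{i+1}=\{M(a,b,ab)\}$ and $U^{i+1}U^i=\{M(a,b,0)\}$.
\item $e_i(a)e_{i+1}(b)e_i(a')=M(a+a',b,ab)$. If $b\neq0$, choosing $a=c/b$ reaches every $c$. Hence $U^{[i,i+1]}_3=\{M(x,b,c)\mid b\neq 0 \text{ or } c=0\}$.
\item Symmetrically, $e_{i+1}(b)e_i(a)e_{i+1}(b')=M(a,b+b',ab')$, so $U^{[i+1,i]}_3=\{M(a,y,c)\mid a\neq0\text{ or } c=0\}$.
\end{itemize}
Both triple sets miss some $M$ with $c\neq 0$, for instance $M(0,0,1)$. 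So neither equals $U^{\{i,i+1\}}$.

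\textbf{Step 3: items (1) and (2).} For the fourfold products, I multiply a suitable triple element on the right by $e_{i+1}(b')$ (respectively $e_i(a')$). This reaches the missing elements: for example, $M(x,-b',c)\cdot e_{i+1}(b')=M(x,0,c+xb')$ with $b'\neq 0$, and every target $M(x,0,c'')$ is obtained by taking $c=c''-xb'$. Hence $U^{[i,i+1]}_4=U^{[i+1,i]}_4=U^{\{i,i+1\}}$, which gives (1). Together with Step 2, the triples are proper subsets, which gives (2). Via Proposition~\ref{prop:rank2_passini}, this means both diameters equal $4$.

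\textbf{Step 4: item (3), the delicate case analysis.} Here $U^{[\cdot,\cdot]}_0=B=\{\mathbf{1}\}$.
\begin{itemize}
\item $f=1$: $U^i\cap U^{i+1}=\{\mathbf{1}\}=B$, so equality holds.
\item $f=2$: the intersection is $\{M(a,b,0)\mid ab=0\}=U^i\cup U^{i+1}$, so equality holds again.
\item $f=3$: the intersection is $\{M(a,b,c)\mid (a\neq0 \text{ or } c=0) \text{ and } (b\neq0 \text{ or } c=0)\}$, while the union of the two-fold products is $\{c=ab\}\cup\{c=0\}$. These differ exactly when there exist $a,b\neq 0$ and $c\notin\{0,ab\}$, which is possible if and only if $q>2$. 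So $f=3$ when $q\neq 2$.
\item $f=4$ with $q=2$: both fourfold products are the whole group. The union of the triples misses $M(0,0,1)$, so the sets differ and $f=4$.
\end{itemize}
The main obstacle is this $f=3$ comparison: I must take care that the descriptions of the triple sets are exact, with the right $b\neq0$ versus $b=0$ split, since the $q=2$ dichotomy hinges on it.
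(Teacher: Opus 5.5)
Your proof is correct and follows essentially the same route as the paper. Both work with explicit coordinates in the copy of $UT(3,q)$ supported on the positions $(i,i+1)$, $(i+1,i+2)$, $(i,i+2)$. For items (2) and (3) you make the same comparisons of the $(i,i+2)$-entry, including the $q=2$ versus $q>2$ dichotomy at $f=3$. The only real difference is in item (1). You show directly that both fourfold products equal the whole group $U^{\{i,i+1\}}$, which is exactly what Proposition~\ref{prop:rank2_passini}(2) needs for the diameters. The paper instead rewrites each element of $U^{i+1}U^iU^{i+1}U^i$ as an element of $U^iU^{i+1}U^iU^{i+1}$, using commutator identities and a case split on whether $\alpha_1+\alpha_3=0$, and leaves implicit that this common set is all of $U^{\{i,i+1\}}$.
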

\begin{proof}
    $\textbf{(1)}$ Let $x\in U^{[i+1,i]}_4 = U^{i+1}U^iU^{i+1}U^i$.
    We have that $U^j=\langle e_j(t)\mid t\in T\rangle$, for any $j\in I$. This means that any element of $U^j$ can be written as $e_j(\alpha)$, with $\alpha\in \FF_q$.
    First, note that if $x$ is in either of the following sets 
    \begin{align*}
        \{&e_i(\alpha_1),\ e_{i+1}(\alpha_1)\mid \alpha_1\in \FF_q\}\\
        \{&e_{i}(\alpha_1)e_{i+1}(\alpha_2),\ e_{i+1}(\alpha_1)e_i(\alpha_2) \mid \alpha_1,\alpha_2\in\FF_q\}\\
        \{&e_{i}(\alpha_1)e_{i+1}(\alpha_2)e_{i}(\alpha_3),\ e_{i+1}(\alpha_1)e_{i}(\alpha_2)e_{i+1}(\alpha_3)\mid \alpha_1,\alpha_2,\alpha_3\in \FF_q\}
    \end{align*}
    we can easily check that $x\in U_4^{[i,i+1]}$.
     Hence, consider then $x=e_{i+1}(\alpha_1)e_{i}(\alpha_2)e_{i+1}(\alpha_3)e_i(\alpha_4)$, where $\alpha_k\in \FF_q\setminus\{0\}$ for all $k=1,2,3,4$.
    By writting an element $e_{j}(\alpha)$ as $(\mathbf{1} + f_{j,j+1}(\alpha))$, we have that $e_{i+1}(\alpha_1)e_{i}(\alpha_2)=\mathbf{1}+f_{i,i+1}(\alpha_2)+f_{i+1,i+2}(\alpha_1)$ and $e_{i+1}(\alpha_3)e_{i}(\alpha_4)=\mathbf{1}+f_{i,i+1}(\alpha_4)+f_{i+1,i+2}(\alpha_3)$. First, we will assume that $\alpha_1+\alpha_3\neq 0$. Therefore, we have that 
    \begin{align*}
        e_{i+1}(\alpha_1)e_{i}(\alpha_2)e_{i+1}(\alpha_3)e_i(\alpha_4) &= (\mathbf{1}+f_{i,i+1}(\alpha_2)+f_{i+1,i+2}(\alpha_1))(\mathbf{1}+f_{i,i+1}(\alpha_4)+f_{i+1,i+2}(\alpha_3))\\
        &= \mathbf{1} + f_{i,i+1}(\alpha_2+\alpha_4) + f_{i+1,i+2}(\alpha_1+\alpha_3) + f_{i,i+2}(\alpha_2\alpha_3)\\
        & = e_{i}(\alpha_2+\alpha_4)e_{i+1}(\alpha_1+\alpha_3)(\mathbf{1} - f_{i,i+2}(A))  
    \end{align*}
    with $A = \alpha_2\alpha_1 + \alpha_4\alpha_1+\alpha_4\alpha_3$.
    Then, we have that 
    \begin{align*}
        e_{i,i+2}(-A) = \mathbf{1} - f_{i,i+2}(A) &= e_{i+1}(-\alpha_1-\alpha_3)e_i(-A(\alpha_1+\alpha_3)^{-1}) e_{i+1}(\alpha_1+\alpha_3)e_i(A(\alpha_1+\alpha_3)^{-1})
    \end{align*}
    Hence, as $e_{i+1}(-\alpha_1-\alpha_3) = e_{i+1}(\alpha_1+\alpha_3)^{-1}$, we have
\begin{align*}
        x&= e_{i}(\alpha_2+\alpha_4)e_{i+1}(\alpha_1+\alpha_3)(\mathbf{1} - f_{i,i+2}(A))\\
        &= e_{i}(\alpha_2+\alpha_4)e_{i+1}(\alpha_1+\alpha_3)e_{i+1}(-\alpha_1-\alpha_3)e_i(-A(\alpha_1+\alpha_3)^{-1}) e_{i+1}(\alpha_1+\alpha_3)e_i(A(\alpha_1+\alpha_3)^{-1})\\
        &= e_{i}(\alpha_2+\alpha_4-A(\alpha_1+\alpha_3)^{-1}) e_{i+1}(\alpha_1+\alpha_3)e_i(A(\alpha_1+\alpha_3)^{-1})
    \end{align*}
which belongs to $U_4^{[i,i+1]}$.
Assume now that $\alpha_1+\alpha_3=0$. In this case we have that 
\begin{align*}
e_{i+1}(\alpha_1)e_{i}(\alpha_2)e_{i+1}(\alpha_3)e_i(\alpha_4) &= (\mathbf{1}+f_i(\alpha_2)+f_{i+1}(\alpha_1))(\mathbf{1}+f_i(\alpha_4)+f_{i+1}(\alpha_3))\\
        &= \mathbf{1} + f_i(\alpha_2+\alpha_4) + f_{i,i+2}(\alpha_2\alpha_3)\\
        & = e_{i}(\alpha_2+\alpha_4)(\mathbf{1} + f_{i,i+2}(\alpha_2\alpha_3)).  
    \end{align*}
    As before, note that we can rewrite $e_{i,i+2}(\alpha_2\alpha_3)= \mathbf{1} + f_{i,i+2}(\alpha_2\alpha_3)$ as the following product $$e_{i}(1)e_{i+1}(\alpha_2\alpha_3)e_i(-1)e_{i+1}(-\alpha_2\alpha_3).$$
    Therefore, we have
\begin{align*}
        x&= e_{i}(\alpha_2+\alpha_4)(\mathbf{1} + f_{i,i+2}(\alpha_2\alpha_3))\\
        &= e_{i}(\alpha_2+\alpha_4)e_{i}(1)e_{i+1}(\alpha_2\alpha_3)e_i(-1)e_{i+1}(-\alpha_2\alpha_3)\\
        &= e_{i}(\alpha_2+\alpha_4+1)e_{i+1}(\alpha_2\alpha_3)e_i(-1)e_{i+1}(-\alpha_2\alpha_3)
    \end{align*}
which also belongs to $U_4^{[i,i+1]}$.
With this, we have proven that $U^{[i+1,i]}_4\subseteq U^{[i,i+1]}_4$
We can use equivalent arguments to prove that $U^{[i,i+1]}_4\subseteq U^{[i+1,i]}_4$.

    $\textbf{(2)}$ Now, clearly $U_3^{[i,i+1]}\subset U_4^{[i,i+1]}$. Consider $x\in U_4^{[i,i+1]}$ such that $x=e_{i,i+2}(a)$ and $a\in \FF_q\setminus \{ 0\}$. Indeed, we know that $x\in U_4^{[i,i+1]}$ by Proposition~\ref{prop:gens_props}(3).
    Suppose we could write $x$ as a word in $U_3^{[i,i+1]}$, i.e. $x = e_i(\alpha_1)e_{i+1}(\alpha_2)e_i(\alpha_3)$.
    Therefore, we can express $x$ as 
    \begin{align*}
        x&= \mathbf{1} + f_i(\alpha_1+\alpha_3)+f_{i+1}(\alpha_2)+f_{i,i+2}(\alpha_1\alpha_2).
    \end{align*}
    In order for $x=e_{i,i+2}(a)$, we need $\alpha_1\alpha_2=a$, $\alpha_1+\alpha_3=0$ and $\alpha_2=0$, a contradiction.
    Hence $x\notin U_3^{[i,i+1]}$.
    A similar argument proves the case $U_3^{[i+1,i]}\neq U_4^{[i+1,i]}$

    $\textbf{(3)}$ 
    First, note that the reverse inclusion is always true, as  $U_{f-1}^{[i,i+1]}$ and $U_{f-1}^{[i+1,i]}$ are clearly subsets of both $U_{f}^{[i,i+1]}$ and $U_{f}^{[i+1,i]}$. 
    
    We will first prove that for any pair $\{i,i+1\}$ ($i=1,\ldots,n-1$) and $f=1,2$, we have $U^{[i,i+1]}_f\cap U^{[i+1,i]}_f = U^{[i,i+1]}_{f-1}\cup U^{[i+1,i]}_{f-1}$.
    For $f=1$, we have that $U^{[i,i+1]}_1\cap U_1^{[i+1,i]} = U^i\cap U^{i+1} = \langle \mathbf{1}\rangle$ by Proposition~\ref{prop:UniT_IP}. As, by definition, $U_0^{[k,l]}=B$ and $B=\langle \mathbf{1}\rangle$, we have $U_0^{[i,i+1]}=\langle \mathbf{1}\rangle = U_0^{[i+1,i]}$, giving the equality for $f=1$.
    
    Consider now $f=2$.
    We need to prove that $U^iU^{i+1}\cap U^{i+1}U^{i} \subseteq   U^i \cup U^{i+1}$.
    Consider $x\in U^iU^{i+1}\cap U^{i+1}U^i$.
    Hence, $x$ can be written as both a word $e_i(a)e_{i+1}(b)$ and a word $e_{i+1}(c)e_{i+1}(d)$, i.e. we have that $$\mathbf{1}+f_i(a)+f_{i+1}(b)+f_{i,i+2}(ab) = \mathbf{1} + f_i(d)+f_{i+1}(c).$$
    This is only true when $a=d$, $b=c$ and $ab=0\Leftrightarrow a=0\vee b=0$.
    When $a=0$, we get that $x=e_{i+1}(b)=e_{i+1}(c)\in U^{i+1}$; when $b=0$, we get that $x=e_i(a)=e_i(d)\in U^i$. In either case, we have that $x\in U^i\cup U^{i+1}$, as wanted.

    Now, we will consider the case $f=3$. Let $x\in U^iU^{i+1}U^i\cap U^{i+1}U^{i}U^{i+1}$, i.e.
    $$x=e_i(\alpha_1)e_{i+1}(\alpha_2)e_{i}(\alpha_3) = e_{i+1}(\alpha_4)e_{i}(\alpha_5)e_{i+1}(\alpha_6)$$ with $\alpha_k\in \FF_q$ for $k=1,\ldots,6$. 
    Hence, we have 
    \begin{align*}
        e_i(\alpha_1)e_{i+1}(\alpha_2)e_{i}(\alpha_3) &= e_{i+1}(\alpha_4)e_{i}(\alpha_5)e_{i+1}(\alpha_6)\\
        \mathbf{1} + f_i(\alpha_1+\alpha_3)+f_{i+1}(\alpha_2)+f_{i,i+2}(\alpha_1\alpha_2)&=\mathbf{1}+f_i(\alpha_5)+f_{i+1}(\alpha_4+\alpha_6)+f_{i,i+2}(\alpha_5\alpha_6),
    \end{align*}
    which implies that $\alpha_1+\alpha_3=\alpha_5$, $\alpha_2=\alpha_4+\alpha_6$ and $\alpha_1\alpha_2=\alpha_5\alpha_6$.
    Note that if, for some $k$, $\alpha_k=0$, we have trivially that $x\in U^iU^{i+1}\cup U^{i+1}U^i$. So, we will suppose that for all $k=1,\ldots,6$, $\alpha_k\neq 0$.
    
    \textit{Case $q\neq 2$:} First, note that the intersection $U^iU^{i+1}U^i\cap U^{i+1}U^{i}U^{i+1}$ has non-trivial elements which can be written will all $\alpha_k\neq 0$. Indeed, we can take $x$ in this intersection where $\alpha_5=\alpha_2$, which will imply that $\alpha_1=\alpha_6$ and $\alpha_3=\alpha_4$ (all taken to be non-zero).
    Additionally, this means that the matrix of $x$ has a non-null $(i,i+2)$-entry. We remind the reader that, for $\alpha,\beta\in\FF_q$, $e_{i}(\alpha)e_{i+1}(\beta) = \mathbf{1} + f_i(\alpha)+f_{i+1}(\beta)+f_{i,i+2}(\alpha\beta)$, while $e_{i+1}(\beta)e_{i+1}(\alpha) = \mathbf{1} + f_i(\alpha)+f_{i+1}(\beta)$. Since
    any element of $U^{i+1}U^i$ is always a matrix with a null $(i,i+2)$-entry, $x\notin U^{i+1}U^i$. Moreover, given an element from $U^{i}U^{i+1}$, its $(i,i+2)$-entry is the product of its $(i,i+1)$ and $(i+1,i+2)$ entries. However, that would mean $(\alpha_1+\alpha_3)\alpha_2 = \alpha_1\alpha_2 \Leftrightarrow \alpha_2 = 0 \vee \alpha_3 = 0$, a contradiction. Therefore, $x\notin U^{i}U^{i+1}$.
    This proves that, for $q\neq 2$, $f=3$ is the minimal positive integer such that 
    $U^{[i,i+1]}_f\cap U^{[i+1,i]}_f\neq U^{[i,i+1]}_{f-1}\cup U^{[i+1,i]}_{f-1}$.
    
    \textit{Case $q=2$:} As we are forcing that all $\alpha_k\neq 0$, with $q=2$ there is only one option, i.e. $\alpha_k=1$ for all $k$. However, this means that $\alpha_5 = 1 + 1 = 0$, a contradiction. Indeed, at least one of ${\alpha_1,\alpha_2,\alpha_3}$ and at least one of ${\alpha_4,\alpha_5,\alpha_6}$ must be zero.
    Therefore, all elements in the intersection $U^iU^{i+1}U^i\cap U^{i+1}U^{i}U^{i+1}$ can be written in either $U^iU^{i+1}$ or $U^{i+1}U^{i}$.
    Therefore, contrary to the case $q\neq 2$, whenever $q=2$ we have $ U^iU^{i+1}U^i\cap U^{i+1}U^{i}U^{i+1}= U^iU^{i+1}\cup U^{i+1}U^i$, i.e. $f=3$ is \emph{not} the minimal positive integer such that $U^{[i,i+1]}_f\cap U^{[i+1,i]}_f\neq U^{[i,i+1]}_{f-1}\cup U^{[i+1,i]}_{f-1}$. 

    Finally, we will prove that, whenever $q=2$, $f=4$ is the minimal positive integer satisfying $U^{[i,i+1]}_f\cap U^{[i+1,i]}_f\neq U^{[i,i+1]}_{f-1}\cup U^{[i+1,i]}_{f-1}$. Take $x\in U^iU^{i+1}U^iU^{i+1}\cap U^{i+1}U^{i}U^{i+1}U^i$  such that $$x=(e_i(1)e_{i+1}(1))^2 = (e_{i+1}(1)e_i(1))^2=e_{i,i+2}(1).$$
    An element of $U^iU^{i+1}U^i$ is always written as $$e_i(\alpha_1)e_{i+1}(\alpha_2)e_{i}(\alpha_3)=\mathbf{1}+f_i(\alpha_1+\alpha_3)+f_{i+1}(\alpha_2)+f_{i,i+2}(\alpha_1\alpha_2).$$
    For $x$ to be in $U^iU^{i+1}U^i$, we would need $\alpha_1\alpha_2=1$ and $\alpha_2=0$, a contradiction. The same argument can be said for elements of $U^{i+1}U^iU^{i+1}$. Hence, $x=e_{i,i+2}(1)\notin U^iU^{i+1}U^i\cup U^{i+1}U^iU^{i+1}$. Therefore, for $q\neq 2$, $f=4$ is the minimal positive integer such that 
    $U^{[i,i+1]}_f\cap U^{[i+1,i]}_f\neq U^{[i,i+1]}_{f-1}\cup U^{[i+1,i]}_{f-1}$.
\end{proof}

\subsection{The Geometry of Unitriangular Groups}

\begin{theorem}\label{thm:CG_UTnq}
Let $U:=UT(n,\FF_q)$ be the upper unitriangular group of size $n$ over the field $\FF_q$.

    Consider the coset incidence system $\UU\TT(n,q):=(U, (U_i)_{i\in I})$, where $I=\{1,\ldots,n-1\}$ and \linebreak $U_i := \langle e_{j}(t) \mid t\in T, j\in I\setminus\{i\} \rangle$.

    \begin{enumerate}
        \item $U$ acts flag-transitively on $\UU\TT(n,q)$;
        \item $\UU\TT(n,q)$ is residually connected;
        \item $\UU\TT(n,q)$ is thick for $q\neq 2$; $\UU\TT(n,q)$ is thin for $q=2$;
        \item $\UU\TT(n,q)$ has linear Buekenhout diagram,
        $$\xymatrix@-1pc{*{\circ} \ar@{-}[rr]^{4\ 3\ 4}_(0.01){q-1}  && *{\circ}\ar@{-}[rr]^{4\ 3\ 4}_(0.01){q-1}  && *{\circ}\ar@{..}[rr]_(0.01){q-1}  && *{\circ}\ar@{-}[rr]^{4\ 3\ 4}_(0.01){q-1} && *{\circ}\ar@{-}[rr]^{4\ 3\ 4}_(0.01){q-1}_(0.99){q-1}  && *{\circ}\\}$$
        when $q\neq 2$, otherwise
        $$\xymatrix@-1pc{*{\circ} \ar@{-}[rr]^{4}_(0.01){1}  && *{\circ}\ar@{-}[rr]^{4}_(0.01){1}  && *{\circ}\ar@{..}[rr]_(0.01){1}  && *{\circ}\ar@{-}[rr]^{4}_(0.01){1} && *{\circ}\ar@{-}[rr]^{4}_(0.01){1}_(0.99){1}  && *{\circ}\\}$$
    \end{enumerate}
\end{theorem}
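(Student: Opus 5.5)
The plan is to derive Theorem~\ref{thm:CG_UTnq} almost entirely from the general results on pattern geometries, specialised to $\widehat{I}=\Delta_n$, and then read off the diagram parameters from Proposition~\ref{prop:rank2_passini}.

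\textbf{Items (1) and (2).} These follow directly from Theorem~\ref{thm:CG_of_Pattern}(1),(2), since $\UU\TT(n,q)=\UU\TT_{\Delta_n}(n,q)$. Its maximal parabolics $U_{(i,i+1)}$ are exactly the $U_i$ of the statement.

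\textbf{Item (3).} The covering relations of $\Delta_n$ form a chain, so every $(i,j)\in\Delta_n$ has $|C_{(i,j)}|=1$. Proposition~\ref{prop:Pattern_B=1equiv} then gives $B=\langle\mathbf{1}\rangle$, which is trivially normal in $U$. Theorem~\ref{thm:CG_of_Pattern}(4) yields thinness when $q=2$, and Theorem~\ref{thm:CG_of_Pattern}(5) yields thickness when $q\neq 2$. Firmness is item (3) of that theorem. These three properties make the geometry firm, residually connected and flag-transitive, so the Buekenhout diagram is defined.

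\textbf{Item (4), overall shape.} Proposition~\ref{prop:UTnq_lin_diag} gives that the diagram is linear: non-consecutive types give generalized digons, and consecutive types do not.

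\textbf{Item (4), vertex orders.} By Proposition~\ref{prop:rank2_passini}(4) applied to the residues, $o_i=[U^i:B]-1$. By Proposition~\ref{prop:UniT_IP}, $U^i=\langle e_i(t)\mid t\in T\rangle=\{e_i(\alpha)\mid \alpha\in\FF_q\}$, which has order $q$, and $B=\langle\mathbf{1}\rangle$. Hence $o_i=q-1$, which is $1$ when $q=2$.

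\textbf{Item (4), edge labels.} For consecutive $i,i+1$, the residue of a flag of type $I\setminus\{i,i+1\}$ is the rank two coset geometry $(U^{\{i,i+1\}},(U^i,U^{i+1}))$. I first note that $U^{\{i,i+1\}}=U^{[i,i+1]}_4$: this group is a copy of $UT(3,\FF_q)$ of order $q^3$, and the computations in Proposition~\ref{prop:diag_props}(1) show that the set $U^{[i,i+1]}_4$ is closed under the needed products and contains $e_{i,i+2}(a)$ for all $a$. A count also works: $e_i(\alpha)e_{i+1}(\beta)e_{i,i+2}(\gamma)$ already realizes every element, and $e_{i,i+2}(\gamma)$ is a commutator lying in $U^{[i,i+1]}_4$.

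\begin{itemize}
\item \emph{Diameters.} Proposition~\ref{prop:diag_props}(1),(2) gives $U^{[i,i+1]}_4=U^{\{i,i+1\}}\neq U^{[i,i+1]}_3$, and similarly with $i,i+1$ swapped. Proposition~\ref{prop:rank2_passini}(2) then gives $d_{i,i+1}=d_{i+1,i}=4$.
\item \emph{Gonality.} Proposition~\ref{prop:diag_props}(3) with Proposition~\ref{prop:rank2_passini}(3) gives $g_{i,i+1}=3$ for $q\neq 2$ and $g_{i,i+1}=4$ for $q=2$.
\end{itemize}

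So the label is $(4,3,4)$ for $q\neq 2$. For $q=2$ it is a generalized $4$-gon and is written simply $4$. That residue is a thin $4$-gon, i.e.\ a square, consistent with the dihedral group of order $8=|UT(3,\FF_2)|$.

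\textbf{Expected main obstacle.} The delicate step is identifying the ambient group of the rank two residue with $U^{[i,i+1]}_4$, which Proposition~\ref{prop:rank2_passini}(2) needs in the form $G^{[0,1]}_f=G$. Proposition~\ref{prop:diag_props} only compares the products with each other, so equality with the whole group $U^{\{i,i+1\}}$ has to be checked separately, by the order and commutator argument above. One should also check that the residue is indeed given by the minimal parabolics, which holds by flag-transitivity together with $B=\langle\mathbf{1}\rangle$.
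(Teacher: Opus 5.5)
Your proposal is correct and follows the paper's proof. The paper likewise obtains (1)--(3) from Theorem~\ref{thm:CG_of_Pattern} together with $B=\langle\mathbf{1}\rangle$ (via Propositions~\ref{prop:Pattern_B=1equiv} and~\ref{prop:UniT_IP}), and (4) from Propositions~\ref{prop:UTnq_lin_diag} and~\ref{prop:diag_props} plus $o_i=|U^i|-1=q-1$.

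The identification $U^{\{i,i+1\}}=U^{[i,i+1]}_4$ that you flag is left implicit in the paper. It follows most cleanly from Proposition~\ref{prop:diag_props}(1): setting $X=U^iU^{i+1}U^iU^{i+1}=U^{i+1}U^iU^{i+1}U^i$ gives $XU^i=XU^{i+1}=X$ and $X^{-1}=X$, so $X$ is a subgroup. Your ``count'' alone does not establish this, because knowing $e_{i,i+2}(\gamma)\in U^{[i,i+1]}_4$ does not by itself put $e_i(\alpha)e_{i+1}(\beta)e_{i,i+2}(\gamma)$ in that product set.
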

\begin{proof}
    Point (1) and (2) are a direct consequence of Theorem~\ref{thm:CG_of_Pattern}. By Proposition~\ref{prop:Pattern_B=1equiv} and Proposition~\ref{prop:UniT_IP}, we have that $B=\langle \mathbf{1}\rangle$. By Theorem~\ref{thm:CG_of_Pattern}(4)--(5), we have point (3) of the statement of this theorem.

    Finally, point (4) is a consequence of Propositions~\ref{prop:UTnq_lin_diag} and ~\ref{prop:diag_props} and the fact that, as $\UU\TT(n,q)$ is a residually connected flag-transitive geometry, its rank $1$ residues are isomorphic to $U^i=\langle e_{i}(t)\mid t\in T\rangle\cong (C_p)^m$. Hence, each vertex of the Buekenhout diagram is labeled by $o_i=|U^i|-1=q-1$.
    
\end{proof}

By Theorem~\ref{thm:CG_UTnq} and Corollary~\ref{coro:Pattern_with_q=2_hyper}, we have that $\UU\TT(n,2)$ is a regular hypertope with a linear Buekenhout diagram, which is the exact definition of an abstract regular polytope (see~\cite{hypertopes}). Indeed, for each $n\geq 3$, we have an abstract regular polytope $\UU\TT(n,2)$.

\begin{coro}
    $\UU\TT(n,2)$ is an abstract regular polytope of type $\{4^{n-2}\}$ with automorphism group the quotient of the Coxeter group $[4^{n-2}]=\langle s_1,\ldots,s_{n-1}\rangle$ by the extra relations
    $(s_is_{i+1}s_{i+2})^4=1$ for $i\in\{1,\ldots,n-3\}$.
\end{coro}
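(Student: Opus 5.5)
The plan is to deduce the polytope statement from results already proved, and then identify the group by matching our relations against a known presentation of $UT(n,\FF_2)$.

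\textbf{Polytope and type.} By Theorem~\ref{thm:CG_UTnq} and Corollary~\ref{coro:Pattern_with_q=2_hyper}, $\UU\TT(n,2)$ is a thin, residually connected, flag-transitive geometry with linear Buekenhout diagram and trivial Borel subgroup. It is therefore an abstract regular polytope whose automorphism group is $U=UT(n,\FF_2)$, generated by the involutions $s_i:=e_i(1)$, $i\in I$. The Schläfli type is read off from Proposition~\ref{prop:orderproductgens}: for $p=2$ we have $o(s_is_{i+1})=4$. Proposition~\ref{prop:gens_props}(1) gives $(s_is_j)^2=\mathbf{1}$ for $|i-j|>1$. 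Hence the type is $\{4^{n-2}\}$, and $U$ is a quotient of the Coxeter group $[4^{n-2}]$.

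\textbf{The extra relation holds in $U$.} Set $M=s_is_{i+1}s_{i+2}=\mathbf{1}+N$. Here $N$ is strictly upper triangular and supported in the $4\times 4$ diagonal block on the indices $i,\dots,i+3$, so $N^4=0$. In characteristic $2$ we get $M^4=(\mathbf{1}+N)^4=\mathbf{1}+N^4=\mathbf{1}$. Therefore the group
\[
G=\langle s_1,\ldots,s_{n-1}\mid s_i^2,\ (s_is_j)^2\ (|i-j|>1),\ (s_is_{i+1})^4,\ (s_is_{i+1}s_{i+2})^4\rangle
\]
surjects onto $U$ via $s_i\mapsto e_i(1)$. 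It remains to show that this surjection is injective.

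\textbf{Injectivity via a known presentation.} I will use the presentation of $UT(n,\FF_p)$ in \cite{biss2001presentation}. Specialised to $p=2$, its generators are $x_1,\dots,x_{n-1}$ with relations
\[
x_i^2=1,\quad [x_i,x_j]=1\ (|i-j|>1),\quad [x_i,[x_i,x_{i+1}]]=[x_{i+1},[x_i,x_{i+1}]]=1,\quad [[x_i,x_{i+1}],[x_{i+1},x_{i+2}]]=1.
\]
If every such relation holds in $G$ for $x_i=s_i$, then $G$ is a quotient of $UT(n,\FF_2)$, so $|G|\le 2^{n(n-1)/2}=|U|$ and the surjection is an isomorphism.

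\textbf{Checking the relations in $G$.} The first two families are immediate. For the third, let $c=[s_i,s_{i+1}]=(s_is_{i+1})^2$, which is an involution because $(s_is_{i+1})^4=1$. Then $s_i c s_i=(s_{i+1}s_i)^2=c^{-1}=c$, so $s_i$ commutes with $c$, and symmetrically so does $s_{i+1}$. The last relation $[[s_i,s_{i+1}],[s_{i+1},s_{i+2}]]=1$ is where the hypothesis $(s_is_{i+1}s_{i+2})^4=1$ must enter, and I expect this to be the main obstacle.

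\textbf{Attacking the last relation.} First I will note that the computation is confined to the subgroup $\langle s_i,s_{i+1},s_{i+2}\rangle$, which is a quotient of the rank-$3$ group $G_4$ (the case $n=4$). Within it I will expand $(s_is_{i+1}s_{i+2})^4$, using that $s_i$ and $s_{i+2}$ commute and that $c$ and $d:=[s_{i+1},s_{i+2}]$ are central in the relevant rank-$2$ subgroups. The aim is to rewrite this word as $[c,d]$ times factors already known to be trivial. If that manipulation becomes unwieldy, I will instead verify computationally, for instance by coset enumeration, that the rank-$3$ presented group has order $2^6=|UT(4,\FF_2)|$. That forces $[c,d]=1$ in $G_4$, and hence in $G$ by the image argument above.
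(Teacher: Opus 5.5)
Your proposal is correct and follows the paper's own route: the polytope structure comes from Theorem~\ref{thm:CG_UTnq} and Corollary~\ref{coro:Pattern_with_q=2_hyper}, and the presentation from \cite[Theorem 1]{biss2001presentation}, with you spelling out the two-sided verification that the paper only asserts. The step you flag as the main obstacle needs no coset enumeration: using $s_{i+1}^2=\mathbf{1}$ and $s_is_{i+2}=s_{i+2}s_i$ one has $[s_i,s_{i+1}]\,[s_{i+1},s_{i+2}]=s_is_{i+1}s_is_{i+2}s_{i+1}s_{i+2}=(s_is_{i+1}s_{i+2})^2$, and since both commutators are involutions, $[[s_i,s_{i+1}],[s_{i+1},s_{i+2}]]=(s_is_{i+1}s_{i+2})^4$, so the commutator and Coxeter forms of that relation coincide.
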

\begin{proof}
    The above follows from Theorem~\ref{thm:CG_UTnq}, Corollary~\ref{coro:Pattern_with_q=2_hyper}, and the definition of an abstract regular polytope.
    Regarding the group relations, this follows directly from the group relations of $UT(n,\FF_q)$ determined in~\cite[Theorem 1]{biss2001presentation}.
\end{proof}

The relations above define the automorphism group of an abstract regular polytope where the subgroup generated by triples of consecutive generators is isomorphic to the the automorphism group of a toroidal regular map $\{4,4\}_{(2,2)}$.
There has been a lot of work around very similar polytopes. Indeed, Marston Conder has described a similar family of abstract regular polytope of type $\{4^{n-2}\}$, where each toroidal residue is instead $\{4,4\}_{(2,0)}$~\cite{Conder2013}. Additionally, in~\cite{Cunningham2026}, the authors have described a flat regular polytope of type $\{4^{n-2}\}$, with toroidal residue $\{4,4\}_{(2,2)}$. However, due to the flatness condition, this polytope is smaller than our geometry $\UU\TT(n,2)$. Indeed, in~\cite{Cunningham2026}, it is mentioned that $\UU\TT(n,2)$ is a $2$-cover of the flat polytope described.
In correspondence with Marston Conder and Dong-Dong Hou, we were made aware that this family was either known or expected to exist.
Particularly, some members of $\UU\TT(n,2)$ are present in Marston Conder's polytope atlas~\cite{ConderAtlas} for $n=4,5$. Nevertheless, the full description of this family, up to our knowledge, was yet to be published.

\subsection{Automorphisms of $\UU\TT(n,q)$}

Even though we have that $UT(n,\FF_q)$ acts flag-transitively on $\UU\TT(n,q)$, this does not necessarily describe all possible automorphisms of $\UU\TT(n,q)$. Indeed, as we will see, this is only the case whenever $q=2$. 
In this section, we will characterize some of the extra automorphisms of $\UU\TT(n,q)$.

\subsubsection{Regular Hypertope case}
For $q=2$, we have that $\UU\TT(n,q)$ is a regular hypertope.
It is a well-known result for regular hypertopes built as a coset geometry $\Gamma = \Gamma(G,(G_i)_{i\in I})$, with trivial Borel subgroup, that $G = \Aut(\Gamma)$ (see~\cite{hypertopes} for more details). Hence, we have that $\Aut(\UU\TT(n,2)) = UT(n,\FF_2)$.

\subsubsection{Case $n=2$}
We will now consider the case $n=2$. We have not discussed much of this case before as it is quite particular.
For $n=2$, we have exactly $UT(2,\FF_q) = \{e_{1}(\alpha) \mid \alpha\in \FF_q\}\cong (\FF_q,+) $, having has many elements as $\FF_q$.
In this group, there is only one maximal parabolic subgroup, which is the trivial one, hence $\UU\TT(2,q) =(UT(2,\FF_q), (\{\textbf{1}\}))$ is a rank 1 coset geometry. Interestingly, in this particular case, the minimal parabolic is considered to be the whole group $UT(2,\FF_q)$.
Note that the only flags possible are the exactly the elements of $UT(2,\FF_q)$, which are the cosets of $\{\textbf{1}\}$ in this group, and  $UT(2,\FF_q)$ acts transitively trivially on this set. 

Geometrically, the incidence graph of $\UU\TT(2,q)$ is a graph with $q$ vertices, all disconnected from each other (there exists only one type of elements and, hence, there is only self-incidence).
In this case, $Aut(\UU\TT(2,q)) \cong  \Sym(q)$, the symmetric group on $q$ points. This case is a particular example where extra automorphisms of a coset geometry $(G,(G_i)_{i\in I}))$ do not come from the automorphism group of $G$. We point out that this does not contradict the regular hypertope case, since when $q=2$, we have $UT(2,\FF_2) \cong Sym(2)$.

\subsubsection{Case $n>2$}

Lastly, we will consider $n>2$.

\begin{proposition}
    Let $\phi\in \Aut(UT(n,\FF_q))$. Then, $\phi$ is an automorphism of $\UU\TT(n,q)$ if and only if $\phi\in \langle \Aut_D,\Aut_F\rangle$, where
    $\Aut_D$ is the diagonal automorphisms of $UT(n,\FF_q)$, and
$\Aut_F$ is the field automorphisms of $\FF_q$.
\end{proposition}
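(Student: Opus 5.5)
Throughout, a group automorphism $\phi$ of $U=UT(n,\FF_q)$ induces a type-preserving map on $\UU\TT(n,q)$ exactly when it sends each coset $gU_i$ to a coset of $U_i$, i.e. when $\phi(U_i)=U_i$ for every $i\in I$; incidence by non-empty intersection is then automatically preserved. So the statement reduces to showing that the automorphisms of $U$ fixing every maximal parabolic $U_i$ are exactly those in $\langle \Aut_D,\Aut_F\rangle$. Here $\Aut_D$ means conjugation by an invertible diagonal matrix, and $\Aut_F$ means entrywise application of a field automorphism.

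\emph{The inclusion $\langle \Aut_D,\Aut_F\rangle$ into the stabiliser.} Conjugation by $d=\mathrm{diag}(d_1,\dots,d_n)$ sends $e_{k,l}(t)$ to $e_{k,l}(d_kd_l^{-1}t)$. A field automorphism $\sigma$ sends $e_{k,l}(t)$ to $e_{k,l}(\sigma(t))$. In both cases the support pattern of every matrix is preserved. By Lemma~\ref{lem:patter_U_K}, each $U_i$ is the pattern group on the closed set $\widehat{I\setminus\{(i,i+1)\}}$, which is $\Delta_n$ with the whole rectangle $\{(k,l)\mid k\le i<l\}$ removed. Hence both kinds of maps fix every $U_i$, and so does every composite.

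\emph{The converse.} Let $\phi$ fix all $U_i$. By Theorem~\ref{thm:CosetRC}, $\phi$ then fixes every parabolic subgroup, and in particular each minimal parabolic. By Proposition~\ref{prop:UniT_IP} these are $U^i=\{e_i(\alpha)\mid \alpha\in\FF_q\}$. So for each $i$ there is an additive bijection $\psi_i$ of $\FF_q$ with $\phi(e_i(\alpha))=e_i(\psi_i(\alpha))$. Proposition~\ref{prop:gens_props}(3) gives $[e_i(\alpha),e_{i+1}(\beta)]=e_{i,i+2}(\alpha\beta)$. Applying $\phi$ yields
$$\phi(e_{i,i+2}(\alpha\beta))=e_{i,i+2}\bigl(\psi_i(\alpha)\psi_{i+1}(\beta)\bigr).$$
Since the left side depends only on $\alpha\beta$, we get $\psi_i(\alpha)\psi_{i+1}(\beta)=\chi_i(\alpha\beta)$ for some function $\chi_i$.

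We normalise as follows. Set $c_i=\psi_i(1)\neq0$. Compose $\phi$ with the diagonal automorphism whose ratios satisfy $d_i d_{i+1}^{-1}=c_i^{-1}$; such $d$ exists because we may choose $d_1$ and then solve successively. After this we may assume $\psi_i(1)=1$ for all $i$. Putting $\beta=1$ gives $\chi_i=\psi_i$, and putting $\alpha=1$ gives $\chi_i=\psi_{i+1}$. So all $\psi_i$ equal a single $\psi$, and $\psi(\alpha\beta)=\psi(\alpha)\psi(\beta)$. Together with additivity and $\psi(1)=1$, this makes $\psi$ a field automorphism $\sigma$.

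Then $\phi\circ\sigma^{-1}$ fixes every generator $e_i(t)$, and since these generate $U$ it is the identity. Hence $\phi\in\langle \Aut_D,\Aut_F\rangle$.

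\emph{Main obstacle.} The delicate step is the well-definedness of $\chi_i$. The bracket identity has to hold for all $\alpha,\beta$, including zero, and we use that $e_{i,i+2}$ is injective, so that $\phi(e_{i,i+2}(\gamma))$ is determined by $\gamma$. When $n=3$ there is only one commutator relation, but the argument still covers it. One must also check that the normalising diagonal automorphism exists, which is the triangular solve for the $d_i$ described above.
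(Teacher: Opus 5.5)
Your proof is correct and follows essentially the same route as the paper: type-preservation forces $\phi$ to preserve each $U^i=\{e_i(\alpha)\mid \alpha\in\FF_q\}$, the commutator relation $[e_i(\alpha),e_{i+1}(\beta)]=e_{i,i+2}(\alpha\beta)$ makes the resulting additive maps scalar multiples of a single field automorphism, and the scalars are absorbed into a diagonal automorphism. The only difference is organisational: you first normalise by a diagonal automorphism so that $\psi_i(1)=1$, which means only consecutive commutators are needed, whereas the paper carries maps $f_{i,j}$ for every position $(i,j)$, takes for granted that $\phi$ preserves each $\{e_{i,j}(\alpha)\}$, and assembles the diagonal matrix $h_{i,i}=c_{i,n}$ at the end; your version is a bit cleaner and avoids that unjustified step.
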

\begin{proof}
$(\Leftarrow)$
    We will first prove that, if $\phi\in \Aut_D$ or $\phi \in \Aut_F$, then $\phi$ is an automorphism of $\UU\TT(n,q)$.
    
    Suppose $\phi$ is a diagonal automorphism of $UT(n,\FF_q)$, i.e. this automorphism acts by conjugation of a diagonal matrix $diag(\lambda_1,\lambda_2,\ldots,\lambda_n)$, with $\lambda_i$ non-zero terms of $\FF_q$. Let $D = diag(\lambda_1,\lambda_2,\ldots,\lambda_n)$ be the diagonal matrix associated with $\phi$ such that it send the generators of $UT(n,\FF_q)$, $e_i(t)$, to $\phi(e_i(t)) = D e_i(t) D^{-1}$, with $i\in\{1,\ldots, n-1\}$ and $t\in T$. We can simplify and write $\phi(e_{i}(t)) = e_{i}(\lambda_i t \lambda_{i+1}^{-1})$.
    Notice that the kernel of the action by diagonal matrices is the case where $\lambda_1=\lambda_2=\ldots=\lambda_n$. Hence, $Aut_D$ does not contain those cases.
    For each $i\in \{1,\ldots,n-1\}$,  $\phi(gU_{i}) = \phi(g)U_{i}$, fixing the subgroup $U_{i}$. Indeed, for all generators of $U_{i}$, i.e. $e_{k}(t)$ for $k\in \{1,\ldots,n-1\}\setminus\{i\}$ and $t\in T$,
   $\phi(e_{k}(t))= e_{k}(\lambda_k t\lambda_{k+1}^{-1})\in U_{i}$.
    Therefore, this action is an automorphism of $\UU\TT(n,q)$ which is type-preserving.

Suppose $\phi$ a field automorphism of $\FF_q$. Then we have that the action of $\phi$ on the set of generators of $UT(n,\FF_q)$ is  $\phi( e_{i}(t)) = e_{i}(t^{\mu_{\phi}})$, where $\mu_\phi : \FF_q\to \FF_q$ is a field automorphism.
    First, note that for all $i\in \{1,\ldots,n-1\}$, $\phi(U_{i})=U_{i}$. Indeed, given a generator of $U_{i}$, i.e. $e_{k}(t)$ for $k\in \{1,\ldots,n-1\}\setminus\{i\}$ and $t\in T$, we have
   $\phi(e_{k}(t)) = e_{k}(t^{\mu_\phi})\in U_{i}$.
    Additionally, any coset $gU_{i}$ is mapped under $\phi$ to $\phi(g)U_{i}$. Therefore, this action is an automorphism of $\UU\TT(n,q)$ which is type-preserving.

    Since the composition of automorphisms is still an automorphism, we have that any element of $\langle \Aut_D,\Aut_F\rangle$ will be an automorphism of $\UU\TT(n,q)$.

$(\Rightarrow)$  We will now to prove that any automorphism of $\UU\TT(n,q)$ coming from $\Aut(UT(n,\FF_q))$ must be a product of a field automorphism and a diagonal automorphism. Let $\phi\in \Aut(\UU\TT(n,q))$. First note that since $\phi$ is an automorphism of $\UU\TT(n,q)$, it must preserve types. In particular, for each $i\in I$, $\phi$ must fix minimal parabolics $\langle e_i(t)\mid t\in T\rangle$.

Since $\phi$ preserves then minimal parabolics, then for each $i$ there exists an additive morphisms $f_{i,j}:\FF_q\to \FF_q$ such that 
$$
\phi(e_i(\alpha))=e_i(f_{i,i+1}(\alpha))
$$
for some $\alpha\in \FF_q$.
In general, since $\phi(e_{i,j}(\alpha))=e_{i,j}(\beta)$, for $\alpha,\beta\in \FF_q$, we can define the $f_{i,j}$ as:
$$
\phi(e_{i,j}(\alpha))=e_{i,j}(f_{i,j}(\alpha)).
$$

Using the commutator relations in Proposition \ref{prop:gens_props}(3), we have
\begin{equation}\label{eq:com}
  f_{ik}(\alpha \beta)=f_{i,j}(\alpha)f_{j,k}(\beta)  
\end{equation}
for all $i<j<k$ and $\alpha,\beta\in\FF_q$.

We will now prove that all morphisms $f_{i,j}$ can be written in terms of the morphism $f_{1,2}$.
We will prove by induction. Note that $f_{i,j}(\alpha) = 0$ if and only if $\alpha = 0$, otherwise, $\phi$ would not be an automorphism of $UT(n,\FF_q)$.
From Equation \ref{eq:com}, we can derive:
$$
f_{1,2}(\alpha)f_{2,3}(1)=f_{1,2}(1)f_{2,3}(\alpha) \Leftrightarrow f_{2,3}=\frac{f_{2,3}(1)}{f_{1,2}(1)}f_{1,2},
$$
This is well defined as $f_{1,2}(1)\neq 0$. Taking this as our base case, we can use induction to prove that each morphism $f_{i,i+1}$ can be written as $f_{i,i+1} = c_{i,i+1} f_{1,2}$, for some constant $c_{i,i+1}\in \FF_q^\times$. 
Now, using each of the $f_{i,i+1}$, we will use induction on $j-i$ to prove the remaining cases, with $j>i+1$. Using Equation \ref{eq:com} again, we get
$$
f_{i,j}(\alpha \beta)=f_{i,j-1}(\alpha)f_{j-1,j}(\beta),
$$
which by setting $\beta=1$ we obtain
$$
f_{i,j}(\alpha)=f_{i,j-1}(\alpha)f_{j-1,j}(1)
$$
From the equation above, and using induction on $j-i$, one can conclude that $f_{i,j}=c_{i,j}f_{1,2}$ for a constanct $c_{i,j}\in \FF_q^\times$.

From now on, we will consider  $f_{i,j}=c_{i,j}f_{1,2}$. Let us denote $g := f_{1,2}$. Substituting $f_{i,j}=c_{i,j}f_{1,2} = c_{i,j}g$ into Equation \ref{eq:com}, we get
\begin{equation}\label{eq:constant}
c_{i,k}g(\alpha\beta)=c_{i,j}c_{j,k}g(\alpha)g(\beta),
\end{equation}
which, by setting both $\alpha$ and $\beta$ to $1$, gives
$$
c_{i,k}g(1)=c_{i,j}c_{j,k}g(1)g(1).
$$
As $g(1)\neq 0$ and every $c_{i,k}\neq 0$, we can write
$$
g(1)^{-1}=\frac{c_{i,j}c_{j,k}}{c_{i,k}}
$$

Consider $\sigma = g(1)^{-1} g$. Then, for all $i<j<k$, we have
$$f_{i,j}=c_{i,j}f_{1,2}= c_{i,j} g(1) g(1)^{-1}g=c_{i,j}\cdot \frac{c_{i,k}}{c_{i,j}c_{j,k}}\sigma=\frac{c_{i,k}}{c_{j,k}}\sigma$$
In particular, taking $k=n$, we have 
$$f_{i,j} = \frac{c_{i,n}}{c_{j,n}}\sigma \Rightarrow \phi(e_{i,j}(\alpha)) = e_{i,j}\left(\frac{c_{i,n}}{c_{j,n}}\sigma(\alpha)\right).$$
First, let us prove that $\sigma$ is a field automorphism of $\FF_q$.
As $f_{1,2}$ is additive, then $\sigma$ is also also additive. Moreover, by Equation~\ref{eq:constant} we have
\begin{align*}
g(\alpha\beta)&= \frac{c_{i,j}c_{j,k}}{c_{i,k}}g(\alpha)g(\beta) \Leftrightarrow\\
    \frac{c_{i,j}c_{j,k}}{c_{i,k}}g(\alpha\beta)&= \frac{c_{i,j}c_{j,k}}{c_{i,k}}g(\alpha)\frac{c_{i,j}c_{j,k}}{c_{i,k}}g(\beta) \Leftrightarrow\\
\sigma(\alpha\beta) &= \sigma(\alpha)\sigma(\beta)
\end{align*}
Then $\sigma$ is a multiplicative morphism. From the fact that $\phi$ is an automorphism, one can see that $\sigma$ is a bijection, and we also have $\sigma(1)=1$. Then $\sigma$ is a field automorphism of $\FF_q$. 
Finally, by defining the diagonal matrix $h$ with entries $h_{i,i}:=c_{i,n}$, then $\phi$ is equal to the automorphism $C_h\circ\bar\sigma$, where $\bar\sigma$ is the automorphism induced by the field automorphism $\sigma$ on the group, and $C_h$ is the conjugacy by the diagonal matrix $h$.

\end{proof}

Following the above results, we conjecture that, for $n\geq 2$, all automorphisms of $\UU\TT(n,q)$ are either in $UT(n,\FF_q)$, $\Aut_D(UT(n,\FF_q))$, $\Aut_F(UT(n,\FF_q))$, or are any composition of the above.

\begin{conj} For $n\geq 2$,
    $\Aut(\UU\TT(n,q)) \cong UT(n,\FF_q)\rtimes \langle \Aut_D,\Aut_F\rangle$.
\end{conj}

This has been tested to be true for small values of $q$ and $n$ using computational tools.
Additionally, whenever $q=2$, both $\Aut_D$ and $\Aut_F$ are trivial, giving the results obtain previously for the regular hypertopes.

\section{From $\UU\TT_{\widehat{I}}(n,p^m)$ to $\UU\TT_{\widehat{I}}(n,p)$}

In this last section, we will explore the relationship between the geometric structures described in this paper, whenever we vary solely the value of $m$. In particular, we will draw a connection between $\UU\TT_{\widehat{I}}(n,p^m)$ and $\UU\TT_{\widehat{I}}(n,p)$.
To do this, consider the incidence system defined below, which is a special case of the \emph{absolute geometry} (see~\cite{buekenhout2013diagram}).

\begin{definition}
    Let $\Gamma :=(\cup_{i\in I}X_i,I,t,\star)$ be an incidence system and let $\phi \in Aut(\Gamma)$ be a type-preserving automorphism of $\Gamma$. Moreover, for every $i\in I$, suppose that $X_i^\phi \neq \emptyset$, where $X_i^\phi$ is the set of fixed points of $X_i$ by $\phi$.
    The \emph{fixed-point incidence system of $\phi$} is the incidence system $\Gamma^\phi :=(\cup_{i\in I^\phi}X_i^\phi, I^\phi, t^\phi,\star^\phi)$ such that 
    \begin{itemize}
        \item $X_i^\phi$ is the set of fixed points of $X_i$ by $\phi$;
        \item $I^\phi = I$;
        \item $t^\phi(X_i^\phi) = i$;
        \item for $x,y \in \cup_{i\in I^\phi} X_i^\phi$, $ x \star^\phi y$ if and only if $x\star y$.
    \end{itemize}
\end{definition}

Note that we will always consider $m>1$ such that $q=p^m $ is always different from a prime. Additionally, given a set $S$ and a field automorphism $F$, we will consider the $S^F$ to be the set of fixed points of $S$ by $F$, that is $\{s \mid s\in S, F(s)=s\}$. This notation will be used in the proof of the following theorem in different sets, with some abuse of notation.

\begin{theorem}
    Consider a closed set $\widehat{I}\subseteq \Delta_n$, with covering relations $I$, and let $q$ be a power of a prime $p$, $q=p^m$.
    Let $\Gamma = \UU\TT_{\widehat{I}}(n,q)$ and $F$ be a field automorphism of $\FF_q$. Then $\Gamma^F \cong \UU\TT_{\widehat{I}}(n,p)$.
\end{theorem}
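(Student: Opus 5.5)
Throughout, we take $F$ to be a generator of $\mathrm{Gal}(\FF_q/\FF_p)$, for instance the Frobenius map $\alpha\mapsto\alpha^p$, so that its fixed field is $\FF_q^F=\FF_p$. The statement cannot hold for an arbitrary field automorphism: for $F=\mathrm{id}$ we would get $\Gamma^F=\Gamma$. The plan is to write down an explicit type-preserving bijection from $\UU\TT_{\widehat{I}}(n,p)$ onto $\Gamma^F$ and check that it preserves incidence in both directions. Throughout, the canonical coset representatives of Proposition~\ref{prop:Cosets_representative} will do most of the work.

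\textbf{Step 1: $F$ acts on $\Gamma$.} Let $F$ act entrywise on matrices. It is a group automorphism of $U=UT_{\widehat{I}}(n,\FF_q)$. By Lemma~\ref{lem:patter_U_K}, every parabolic $U_K$ is the pattern group $UT_{\widehat{I_K}}(n,\FF_q)$, which is defined only by a vanishing pattern of entries. Hence $F(U_K)=U_K$, and $gU_{(i,j)}\mapsto F(g)U_{(i,j)}$ is a well-defined type-preserving automorphism of $\Gamma$. This is the automorphism meant in the statement.

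\textbf{Step 2: identify the fixed elements.} Fix $(i,j)\in I$ and a coset $gU_{(i,j)}$. Let $m$ be its unique representative with $m_{k,l}=0$ for all $(k,l)\in\widehat{I_{\{(i,j)\}}}$, given by Proposition~\ref{prop:Cosets_representative}. Then $F(m)$ lies in $F(g)U_{(i,j)}$ and has the same zero pattern. So if the coset is fixed, uniqueness forces $F(m)=m$, which means all entries of $m$ lie in $\FF_p$. Conversely, any coset with a representative over $\FF_p$ is clearly fixed. Hence
\[
X_{(i,j)}^F=\{vU_{(i,j)} \mid v\in V\}, \qquad V:=UT_{\widehat{I}}(n,\FF_p)\leq U .
\]
In particular each $X^F_{(i,j)}$ is nonempty. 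Let $V_{(i,j)}$ denote the maximal parabolics of $\UU\TT_{\widehat{I}}(n,p)$; here the $\FF_p$-basis is $\{1\}$. Again by Lemma~\ref{lem:patter_U_K}, $V_K$ is the pattern group over $\widehat{I_K}$ with entries in $\FF_p$, so $V_K=V\cap U_K$ for all $K\subseteq I$.

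\textbf{Step 3: define the bijection.} Define $\psi(vV_{(i,j)})=vU_{(i,j)}$. This map is well defined and injective because $V\cap U_{(i,j)}=V_{(i,j)}$. It is surjective onto $X^F_{(i,j)}$ by Step 2, and it preserves types.

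\textbf{Step 4: incidence.} This is the only real obstacle. If $vV_a\cap wV_b\neq\emptyset$, then trivially $vU_a\cap wU_b\neq\emptyset$. For the converse, set $x=w^{-1}v\in V$, and suppose $x\in U_bU_a$, say $x=uu'$ with $u\in U_b$ and $u'\in U_a$. We need to show $x\in V_bV_a$.

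Let $m$ be the canonical representative of $xU_a=uU_a$, i.e.\ the unique element of this coset vanishing on $\widehat{I_a}$. Apply Proposition~\ref{prop:Cosets_representative} inside $U_b$, to the closed sets $\widehat{I_a}\cap\widehat{I_b}\subseteq\widehat{I_b}$. This gives $m'\in u(U_a\cap U_b)\subseteq U_b$ vanishing on $\widehat{I_a}\cap\widehat{I_b}$. Since $m'\in U_b$, it also vanishes on $\widehat{I_a}\setminus\widehat{I_b}$. By uniqueness $m'=m$, and therefore $m\in U_b$.

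Since $F(x)=x$, the uniqueness argument of Step 2 gives $F(m)=m$, so $m\in V$. Hence $m\in V\cap U_b=V_b$ and $m^{-1}x\in V\cap U_a=V_a$. Therefore $x=m\,(m^{-1}x)\in V_bV_a$, which gives $vV_a\cap wV_b\neq\emptyset$. So $\psi$ is an isomorphism of incidence systems, and $\Gamma^F\cong\UU\TT_{\widehat{I}}(n,p)$.
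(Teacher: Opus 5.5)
Your proof is correct, but it reaches the result by a different mechanism than the paper.

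The paper finds $F$-fixed representatives of fixed cosets via Lang's theorem: from $g^{-1}F(g)\in U_{(i,j)}(q)$ it produces $h\in U_{(i,j)}(q)$ with $h^{-1}F(h)=g^{-1}F(g)$, so $gh^{-1}$ is $F$-fixed. For incidence, it writes the intersection of two fixed cosets as $h(U_{(i,j)}(q)\cap U_{(s,t)}(q))$ using incidence-transitivity. It then relies, implicitly, on the same Lang-type fact that an $F$-stable coset of $U_{(i,j)}(q)\cap U_{(s,t)}(q)$ contains an $F$-fixed point.

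You replace both steps by the canonical representatives of Proposition~\ref{prop:Cosets_representative}. These are $F$-equivariant because $F$ acts entrywise and preserves zero patterns. Your Step 4 amounts to showing that $x\in U_bU_a$ forces the canonical representative of $xU_a$ into $U_b$. This keeps everything inside the paper's own toolkit, with no appeal to Lang's theorem or to incidence-transitivity, and makes the isomorphism explicit. The paper's route, on the other hand, is the one that would transfer to groups lacking such normal forms.

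Step 4 could be shortened further. The nonempty intersection $vU_a\cap wU_b$ is itself an $F$-stable left coset of the pattern group $U_a\cap U_b$. Its canonical representative is therefore $F$-fixed and lies directly in $vV_a\cap wV_b$.

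Finally, restricting to $F$ generating $\mathrm{Gal}(\FF_q/\FF_p)$ is a genuine correction. For $m>1$ the statement fails already for $F=\mathrm{id}$. The paper's remark that $x\mapsto x^{p^k}$ is ``similar'' is right only when $\gcd(k,m)=1$, since in general the fixed field is $\FF_{p^{\gcd(k,m)}}$.
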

\begin{proof}
    Suppose that $F$ is the field automorphism $F(x)=x^p$, this is, the Frobenius automorphism. The case where $F(x)=x^{p^k}$, with $k>1$, is similar.
    For $(i,j)\in I$, we will define $U_{(i,j)}(q) := \langle e_{k,l}(\alpha) \mid (k,l)\in I\setminus\{(i,j)\}, \alpha\in \FF_q\rangle$.
    Consider $X_n(q)=\{gU_{i,j}(q)\mid (i,j)\in I,\;g\in UT_{\widehat{I}}(\FF_q)\}$ and $X_n(p)=\{gU_{i,j}(p)\mid (i,j)\in I,\;g\in UT_{\widehat{I}}(\FF_p)\}$. 
    
    Since $F$ is a group automorphism that fixes the subgroups $U_{i,j}(q)$ (or, equivalently, it acts as an automorphism of $\UU\TT(n,q)$), then $F$ naturally defines a function on the elements of $X_n(q)$.
    Recall that $(UT_{\widehat{I}}(n,\FF_q))^F=UT_{\widehat{I}}(n,\FF_p)$ and $(U_{(i,j)}(q))^F=U_{(i,j)}(p)$. Considering $\Gamma = \UU\TT_{\widehat{I}}(n,q)$, the set of elements of $\Gamma^{F}$ is $X_n(q)^F$. Since $F$ acts as an automorphism of $\UU\TT(n,q)$, $I^F=I$ and the type function follows from the previous one. To prove that $\Gamma^F \cong \UU\TT_{\widehat{I}}(n,p)$, we then need only to show these have the same elements and the same incidences.

    First, we will describe properly the elements of $X_n(q)^F$. Consider an element of $X_n(q)^F$, i.e. a coset of $X_n(q)$, $gU_{(i,j)}(q)$, which is fixed by $F$. In other words, for elements of $X_n(q)^F$, we have $F(gU_{(i,j)}(q))=gU_{(i,j)}(q)$. For this elements, we have $F(gU_{(i,j)}(q)) = F(g)U_{(i,j)}(q)=gU_{(i,j)}(q)$, which is equivalent to $g^{-1}F(g)\in U_{(i,j)}(q)$.  By Lang's Theorem (see \cite{Lang1956}) there exists an $h\in  U_{(i,j)}(q)$ such that $h^{-1}F(h)=g^{-1}F(g)$. For $k=gh^{-1}$, we can show that $k$ is $F$-invariant. Indeed,
    \begin{align*}
        F(k)&=F(g)F(h)^{-1}\\
            &=[gh^{-1}F(h)]F(h)^{-1}\\
            &=gh^{-1}=k.
    \end{align*}
    Then we have that $k\in (UT_{\widehat{I}}(n,\FF_q))^F=UT_{\widehat{I}}(n,\FF_p)$. Hence, we can rewrite our fixed points by taking transversals in $UT_{\widehat{I}}(n,\FF_p)$ since $kU_{i,j}(q)=gh^{-1}U_{i,j}(q)=gU_{i,j}(q)$. Hence, we have that $X_n(q)^F=\{gU_{(i,j)}(q)\mid (i,j)\in I,\;g\in UT_{\widehat{I}}(n,p)\}$.

    Now, let us prove that $X_n(q)^F$ is in bijection with $X_n(p)$, with elements of the same type being mapped to each other. Indeed, we will see that each fixed point $kU_{(i,j)}(q)$ of $X_n(q)^F$ can be mapped to uniquely to the coset $kU_{(i,j)}(p)$, with $k\in UT_{\widehat{I}}(n,p)$.
    We remind the reader that $(kU_{i,j}(q))^F$ is the set of elements of the coset $kU_{i,j}(q)$ which are fixed by $F$.
    It is not hard to check that, given $kU_{i,j}(q)\in X_n(q)^F$, with $k\in UT_{\widehat{I}}(n,p)$, we have $kU_{i,j}(p) = kU_{i,j}(q)^F\subseteq (kU_{i,j}(q))^F$, since $U_{(i,j)}(p)\subseteq U_{(i,j)}(q)$. For the other inclusion, given an $x\in (kU_{i,j}(q))^F$, then there exists a $y\in U_{(i,j)}(q)$ such that $x=ky$. As $x$ is inside of a fixed point of $F$, we have $F(x)=x$ and, by the previous construction of $k$, $F(k)=k$.
    Then,
    $$
    ky=x=F(x)=F(ky)=kF(y).
    $$
    We can then conclude that $F(y)=y$ and so $(kU_{i,j}(q))^F=kU_{i,j}(q)^F=kU_{i,j}(p)$.
    Hence, the elements of $X_n(q)^F=\{gU_{(i,j)}(q)\mid (i,j)\in I,\;g\in UT_{\widehat{I}}(n,p)\}$, are in bijection with $X_n(p)$. 
    
    Finally, let us prove now that the incident relation between elements of $X_n(q)^F$ is equivalent to the one between elements in $X_n(p)$.
    Given $g_1U_{(i,j)}(p) \cap g_2U_{(s,t)}(p) \neq \emptyset$, it is direct that $g_1U_{(i,j)}(q) \cap g_2U_{(s,t)}(q) \neq \emptyset$. Let us now prove the reverse.
    Consider two elements of $X_n(q)^F$, $g_1U_{(i,j)}(q)$ and $g_2U_{(s,t)}(q)$, such that 
    $g_1U_{(i,j)}(q)\cap g_2U_{(s,t)}(q)\neq \emptyset$. From what we have proved before, we can take $g_1,g_2\in UT_{\widehat{I}}(n,\FF_q)^F = UT_{\widehat{I}}(n,\FF_p)$. Note that since $UT_{\widehat{I}}(n,q)$ is always incidence transitive in $\UU\TT_{\widehat{I}}(n,q)$ (see~\cite[Lemma 1.8.6]{buekenhout2013diagram}), there exists a $h\in UT_{\widehat{I}}(n,\FF_q)$ such that $g_1U_{(i,j)}(q)=hU_{(i,j)}(q)$ and $g_2U_{(s,t)}(q)=hU_{(s,t)}(q)$ such that $$h(U_{(i,j)}(q)\cap U_{(s,t)}(q))=g_1U_{(i,j)}(q)\cap g_2U_{(s,t)}(q).$$ Since $F(h(U_{(i,j)}(q)\cap U_{(s,t)}(q)))=h(U_{(i,j)}(q)\cap U_{(s,t)}(q))$, then $ (g_1U_{(i,j)}(q)\cap g_2U_{(s,t)}(q))^F\neq\emptyset$.
    Since the fixed points of the intersection is equal to the intersection of the fixed points, we have
    $$
    \emptyset\neq (g_1U_{(i,j)}(q)\cap g_2U_{(s,t)}(q))^F= (g_1U_{(i,j)}(q))^F\cap (g_2U_{(s,t)}(q))^F=g_1(U_{(i,j)}(q))^F\cap g_2(U_{(s,t)}(q))^F.
    $$
    Since $g_1(U_{(i,j)}(q))^F=g_1(U_{(i,j)}(p))$ and $g_2(U_{(s,t)}(q))^F = g_2(U_{(s,t)}(p))$, we get that $g_1U_{(i,j)}(p) \cap g_2U_{(s,t)}(p) \neq \emptyset$, as we wanted.
    Then the incident relations between elements of $X_n(q)^F$ is equivalent to the one between elements of $X_n(p)$. And so we have that $\Gamma^F \cong \UU\TT_{\widehat{I}}(n,p)$.
\end{proof}

\begin{exmp}

Consider the unitriangular group of order $3$ over $\FF_4$, $UT(3,\FF_4)$, and let $1,a\in \FF_4$ be its generators as an additive group of the field. By considering the following maximal parabolic subgroups,
     $H_1 = \langle e_{1,2}(t)| t \in T\rangle$
and $ H_2 = \langle e_{2,3}(t)| t \in T\rangle$, we get the incident graph given in Figure~\ref{fig:IncGraphAbs}.

\begin{figure}
\begin{tikzpicture}[>=latex',line join=bevel,scale=0.15]

\node[scale=0.4] (1) at (3192.3bp,1271.3bp) [draw,fill=gray,ellipse,inner xsep=2pt,inner ysep=2pt] {$\begin{bmatrix}1&0&0\\0&1&0\\0&0&1\end{bmatrix}\,H_1$};
  \node[scale=0.4] (17) at (3103.7bp,979.19bp) [draw,fill=gray,ellipse,inner xsep=2pt,inner ysep=2pt] {$\begin{bmatrix}1&0&0\\0&1&0\\0&0&1\end{bmatrix}\,H_2$};
  \node[scale=0.4] (18) at (1665.2bp,18.0bp) [draw,fill=gray,ellipse,inner xsep=2pt,inner ysep=2pt] {$\begin{bmatrix}1&1&0\\0&1&0\\0&0&1\end{bmatrix}\,H_2$};
  \node[scale=0.4] (25) at (2959.8bp,2440.1bp) [draw,fill=white,ellipse,inner xsep=2pt,inner ysep=2pt] {$\begin{bmatrix}1&a&0\\0&1&0\\0&0&1\end{bmatrix}\,H_2$};
  \node[scale=0.4] (26) at (108.16bp,1575.0bp) [draw,fill=white,ellipse,inner xsep=2pt,inner ysep=2pt] {$\begin{bmatrix}1&a^2&0\\0&1&0\\0&0&1\end{bmatrix}\,H_2$};
  \node[scale=0.4] (2) at (1361.4bp,3102.2bp) [draw,fill=gray,ellipse,inner xsep=2pt,inner ysep=2pt] {$\begin{bmatrix}1&0&1\\0&1&0\\0&0&1\end{bmatrix}\,H_1$};
  \node[scale=0.4] (19) at (1665.2bp,3132.1bp) [draw,fill=gray,ellipse,inner xsep=2pt,inner ysep=2pt] {$\begin{bmatrix}1&1&1\\0&1&0\\0&0&1\end{bmatrix}\,H_2$};
  \node[scale=0.4] (20) at (1069.4bp,3013.6bp) [draw,fill=gray,ellipse,inner xsep=2pt,inner ysep=2pt] {$\begin{bmatrix}1&0&1\\0&1&0\\0&0&1\end{bmatrix}\,H_2$};
  \node[scale=0.4] (27) at (3192.3bp,1878.8bp) [draw,fill=white,ellipse,inner xsep=2pt,inner ysep=2pt] {$\begin{bmatrix}1&a^2&1\\0&1&0\\0&0&1\end{bmatrix}\,H_2$};
  \node[scale=0.4] (28) at (370.57bp,710.0bp) [draw,fill=white,ellipse,inner xsep=2pt,inner ysep=2pt] {$\begin{bmatrix}1&a&1\\0&1&0\\0&0&1\end{bmatrix}\,H_2$};
  \node[scale=0.4] (3) at (2530.3bp,280.41bp) [draw,fill=white,ellipse,inner xsep=2pt,inner ysep=2pt] {$\begin{bmatrix}1&0&a\\0&1&0\\0&0&1\end{bmatrix}\,H_1$};
  \node[scale=0.4] (21) at (2766.2bp,474.05bp) [draw,fill=white,ellipse,inner xsep=2pt,inner ysep=2pt] {$\begin{bmatrix}1&1&a\\0&1&0\\0&0&1\end{bmatrix}\,H_2$};
  \node[scale=0.4] (22) at (2261.1bp,136.52bp) [draw,fill=white,ellipse,inner xsep=2pt,inner ysep=2pt] {$\begin{bmatrix}1&0&a\\0&1&0\\0&0&1\end{bmatrix}\,H_2$};
  \node[scale=0.4] (29) at (800.16bp,280.41bp) [draw,fill=white,ellipse,inner xsep=2pt,inner ysep=2pt] {$\begin{bmatrix}1&a^2&a\\0&1&0\\0&0&1\end{bmatrix}\,H_2$};
  \node[scale=0.4] (30) at (1969.0bp,3102.2bp) [draw,fill=white,ellipse,inner xsep=2pt,inner ysep=2pt] {$\begin{bmatrix}1&a&a\\0&1&0\\0&0&1\end{bmatrix}\,H_2$};
  \node[scale=0.4] (4) at (370.57bp,2440.1bp) [draw,fill=white,ellipse,inner xsep=2pt,inner ysep=2pt] {$\begin{bmatrix}1&0&a^2\\0&1&0\\0&0&1\end{bmatrix}\,H_1$};
  \node[scale=0.4] (23) at (226.69bp,2170.9bp) [draw,fill=white,ellipse,inner xsep=2pt,inner ysep=2pt] {$\begin{bmatrix}1&0&a^2\\0&1&0\\0&0&1\end{bmatrix}\,H_2$};
  \node[scale=0.4] (24) at (564.21bp,2676.0bp) [draw,fill=white,ellipse,inner xsep=2pt,inner ysep=2pt] {$\begin{bmatrix}1&1&a^2\\0&1&0\\0&0&1\end{bmatrix}\,H_2$};
  \node[scale=0.4] (31) at (1069.4bp,136.52bp) [draw,fill=white,ellipse,inner xsep=2pt,inner ysep=2pt] {$\begin{bmatrix}1&a&a^2\\0&1&0\\0&0&1\end{bmatrix}\,H_2$};
  \node[scale=0.4] (32) at (2530.3bp,2869.7bp) [draw,fill=white,ellipse,inner xsep=2pt,inner ysep=2pt] {$\begin{bmatrix}1&a^2&a^2\\0&1&0\\0&0&1\end{bmatrix}\,H_2$};
  \node[scale=0.4] (5) at (2959.8bp,710.0bp) [draw,fill=white,ellipse,inner xsep=2pt,inner ysep=2pt] {$\begin{bmatrix}1&0&0\\0&1&a\\0&0&1\end{bmatrix}\,H_1$};
  \node[scale=0.4] (6) at (800.16bp,2869.7bp) [draw,fill=white,ellipse,inner xsep=2pt,inner ysep=2pt] {$\begin{bmatrix}1&0&1\\0&1&a\\0&0&1\end{bmatrix}\,H_1$};
  \node[scale=0.4] (7) at (1969.0bp,47.918bp) [draw,fill=white,ellipse,inner xsep=2pt,inner ysep=2pt] {$\begin{bmatrix}1&0&a\\0&1&a\\0&0&1\end{bmatrix}\,H_1$};
  \node[scale=0.4] (8) at (138.08bp,1878.8bp) [draw,fill=white,ellipse,inner xsep=2pt,inner ysep=2pt] {$\begin{bmatrix}1&0&a^2\\0&1&a\\0&0&1\end{bmatrix}\,H_1$};
  \node[scale=0.4] (9) at (2261.1bp,3013.6bp) [draw,fill=gray,ellipse,inner xsep=2pt,inner ysep=2pt] {$\begin{bmatrix}1&0&0\\0&1&1\\0&0&1\end{bmatrix}\,H_1$};
  \node[scale=0.4] (10) at (1361.4bp,47.918bp) [draw,fill=gray,ellipse,inner xsep=2pt,inner ysep=2pt] {$\begin{bmatrix}1&0&1\\0&1&1\\0&0&1\end{bmatrix}\,H_1$};
  \node[scale=0.4] (11) at (3103.7bp,2170.9bp) [draw,fill=white,ellipse,inner xsep=2pt,inner ysep=2pt] {$\begin{bmatrix}1&0&a\\0&1&1\\0&0&1\end{bmatrix}\,H_1$};
  \node[scale=0.4] (12) at (226.69bp,979.19bp) [draw,fill=white,ellipse,inner xsep=2pt,inner ysep=2pt] {$\begin{bmatrix}1&0&a^2\\0&1&1\\0&0&1\end{bmatrix}\,H_1$};
  \node[scale=0.4] (13) at (564.21bp,474.05bp) [draw,fill=white,ellipse,inner xsep=2pt,inner ysep=2pt] {$\begin{bmatrix}1&0&0\\0&1&a^2\\0&0&1\end{bmatrix}\,H_1$};
  \node[scale=0.4] (14) at (2766.2bp,2676.0bp) [draw,fill=white,ellipse,inner xsep=2pt,inner ysep=2pt] {$\begin{bmatrix}1&0&1\\0&1&a^2\\0&0&1\end{bmatrix}\,H_1$};
  \node[scale=0.4] (15) at (138.08bp,1271.3bp) [draw,fill=white,ellipse,inner xsep=2pt,inner ysep=2pt] {$\begin{bmatrix}1&0&a\\0&1&a^2\\0&0&1\end{bmatrix}\,H_1$};
  \node[scale=0.4] (16) at (3222.3bp,1575.0bp) [draw,fill=white,ellipse,inner xsep=2pt,inner ysep=2pt] {$\begin{bmatrix}1&0&a^2\\0&1&a^2\\0&0&1\end{bmatrix}\,H_1$};
  \draw [black, line width=1.5pt] (1) ..controls (3170.9bp,1200.6bp) and (3125.2bp,1049.9bp)  .. (17);
  \draw [black, line width=1.5pt] (1) ..controls (2998.0bp,1111.8bp) and (1859.5bp,177.46bp)  .. (18);
  \draw [gray,] (1) ..controls (3161.5bp,1426.4bp) and (2990.7bp,2285.1bp)  .. (25);
  \draw [gray,] (1) ..controls (2658.9bp,1323.8bp) and (660.56bp,1520.6bp)  .. (26);
  \draw [black, line width=1.5pt] (2) ..controls (1489.4bp,3114.8bp) and (1536.9bp,3119.5bp)  .. (19);
  \draw [black, line width=1.5pt] (2) ..controls (1256.4bp,3070.3bp) and (1174.4bp,3045.4bp)  .. (20);
  \draw [gray,] (2) ..controls (1595.9bp,2945.5bp) and (2958.2bp,2035.3bp)  .. (27);
  \draw [gray,] (2) ..controls (1267.4bp,2875.1bp) and (464.9bp,937.72bp)  .. (28);
  \draw [gray,] (3) ..controls (2597.0bp,335.17bp) and (2699.6bp,419.37bp)  .. (21);
  \draw [gray,] (3) ..controls (2446.0bp,235.4bp) and (2345.5bp,181.64bp)  .. (22);
  \draw [gray,] (3) ..controls (2128.8bp,280.41bp) and (1221.5bp,280.41bp)  .. (29);
  \draw [gray,] (3) ..controls (2480.8bp,529.29bp) and (2018.6bp,2852.8bp)  .. (30);
  \draw [gray,] (4) ..controls (334.76bp,2373.1bp) and (262.82bp,2238.5bp)  .. (23);
  \draw [gray,] (4) ..controls (421.49bp,2502.1bp) and (513.19bp,2613.9bp)  .. (24);
  \draw [gray,] (4) ..controls (438.06bp,2217.6bp) and (1001.8bp,359.22bp)  .. (31);
  \draw [gray,] (4) ..controls (767.43bp,2519.0bp) and (2126.6bp,2789.4bp)  .. (32);
  \draw [gray,] (5) ..controls (2995.7bp,777.0bp) and (3067.6bp,911.59bp)  .. (17);
  \draw [gray,] (5) ..controls (2908.9bp,647.96bp) and (2817.2bp,536.22bp)  .. (21);
  \draw [gray,] (5) ..controls (2990.7bp,865.15bp) and (3161.5bp,1723.8bp)  .. (27);
  \draw [gray,] (5) ..controls (2638.9bp,612.64bp) and (1394.7bp,235.21bp)  .. (31);
  \draw [gray,] (6) ..controls (884.37bp,2914.7bp) and (984.96bp,2968.5bp)  .. (20);
  \draw [gray,] (6) ..controls (733.55bp,2815.0bp) and (631.36bp,2731.2bp)  .. (24);
  \draw [gray,] (6) ..controls (712.59bp,2705.9bp) and (195.21bp,1737.9bp)  .. (26);
  \draw [gray,] (6) ..controls (1074.8bp,2924.3bp) and (1694.8bp,3047.6bp)  .. (30);
  \draw [gray,] (7) ..controls (1841.3bp,35.343bp) and (1793.7bp,30.652bp)  .. (18);
  \draw [gray,] (7) ..controls (2074.0bp,79.769bp) and (2156.1bp,104.67bp)  .. (22);
  \draw [gray,] (7) ..controls (1706.5bp,156.62bp) and (633.9bp,600.92bp)  .. (28);
  \draw [gray,] (7) ..controls (2018.5bp,296.8bp) and (2480.7bp,2620.4bp)  .. (32);
  \draw [gray,] (8) ..controls (332.45bp,2038.3bp) and (1470.9bp,2972.6bp)  .. (19);
  \draw [gray,] (8) ..controls (159.54bp,1949.5bp) and (205.24bp,2100.2bp)  .. (23);
  \draw [gray,] (8) ..controls (600.33bp,1970.8bp) and (2506.9bp,2350.0bp)  .. (25);
  \draw [gray,] (8) ..controls (213.93bp,1695.7bp) and (724.13bp,463.97bp)  .. (29);
  \draw [black, line width=1.5pt] (9) ..controls (2347.3bp,2805.3bp) and (3017.3bp,1187.9bp)  .. (17);
  \draw [black, line width=1.5pt] (9) ..controls (2078.3bp,3049.9bp) and (1848.0bp,3095.7bp)  .. (19);
  \draw [gray,] (9) ..controls (2156.1bp,3045.4bp) and (2074.0bp,3070.3bp)  .. (30);
  \draw [gray,] (9) ..controls (2345.1bp,2968.6bp) and (2445.3bp,2915.1bp)  .. (32);
  \draw [black, line width=1.5pt] (10) ..controls (1489.4bp,35.313bp) and (1536.9bp,30.642bp)  .. (18);
  \draw [black, line width=1.5pt] (10) ..controls (1336.4bp,301.93bp) and (1094.5bp,2758.3bp)  .. (20);
  \draw [gray,] (10) ..controls (1217.9bp,107.38bp) and (945.31bp,220.29bp)  .. (29);
  \draw [gray,] (10) ..controls (1256.8bp,79.67bp) and (1175.4bp,104.36bp)  .. (31);
  \draw [gray,] (11) ..controls (3017.5bp,1962.7bp) and (2347.5bp,345.27bp)  .. (22);
  \draw [gray,] (11) ..controls (2676.5bp,2255.9bp) and (1000.3bp,2589.3bp)  .. (24);
  \draw [gray,] (11) ..controls (3067.9bp,2237.9bp) and (2996.0bp,2372.5bp)  .. (25);
  \draw [gray,] (11) ..controls (3125.2bp,2100.2bp) and (3170.9bp,1949.5bp)  .. (27);
  \draw [gray,] (12) ..controls (662.95bp,892.41bp) and (2339.2bp,558.98bp)  .. (21);
  \draw [gray,] (12) ..controls (226.69bp,1135.5bp) and (226.69bp,2014.4bp)  .. (23);
  \draw [gray,] (12) ..controls (205.35bp,1086.5bp) and (129.36bp,1468.5bp)  .. (26);
  \draw [gray,] (12) ..controls (262.5bp,912.19bp) and (334.44bp,777.6bp)  .. (28);
  \draw [gray,] (13) ..controls (1000.5bp,560.83bp) and (2676.8bp,894.26bp)  .. (17);
  \draw [gray,] (13) ..controls (564.21bp,692.24bp) and (564.21bp,2458.3bp)  .. (24);
  \draw [gray,] (13) ..controls (513.29bp,536.09bp) and (421.59bp,647.83bp)  .. (28);
  \draw [gray,] (13) ..controls (630.82bp,419.38bp) and (733.01bp,335.52bp)  .. (29);
  \draw [gray,] (14) ..controls (2419.0bp,2745.1bp) and (1409.1bp,2946.0bp)  .. (20);
  \draw [gray,] (14) ..controls (2766.2bp,2457.9bp) and (2766.2bp,691.77bp)  .. (21);
  \draw [gray,] (14) ..controls (2817.1bp,2614.0bp) and (2908.8bp,2502.3bp)  .. (25);
  \draw [gray,] (14) ..controls (2699.6bp,2730.7bp) and (2597.4bp,2814.6bp)  .. (32);
  \draw [gray,] (15) ..controls (300.71bp,1469.4bp) and (1503.0bp,2934.5bp)  .. (19);
  \draw [gray,] (15) ..controls (418.35bp,1121.5bp) and (1982.6bp,285.36bp)  .. (22);
  \draw [gray,] (15) ..controls (130.99bp,1343.3bp) and (115.32bp,1502.4bp)  .. (26);
  \draw [gray,] (15) ..controls (263.16bp,1118.9bp) and (944.68bp,288.44bp)  .. (31);
  \draw [gray,] (16) ..controls (3042.6bp,1395.4bp) and (1845.1bp,197.88bp)  .. (18);
  \draw [gray,] (16) ..controls (2738.3bp,1671.3bp) and (703.47bp,2076.1bp)  .. (23);
  \draw [gray,] (16) ..controls (3215.2bp,1647.0bp) and (3199.5bp,1806.2bp)  .. (27);
  \draw [gray,] (16) ..controls (3075.6bp,1753.7bp) and (2115.5bp,2923.7bp)  .. (30);
\end{tikzpicture}
\caption{Incidence graph of $\UU\TT(3,4)$, with the vertices of the incidence subgraph of $\UU\TT(3,2)$ with darker colour.}\label{fig:IncGraphAbs}
\end{figure}

In this figure, the darker cells correspond to the elements that are fixed by the Frobenius morphism. Note that these are exactly the ones that have their transversal representative in $UT(3,\FF_2)$. One can see that the darker vertices, together with thicker edges, correspond exactly to the incident graph of $\UU\TT(3,2)$.

\end{exmp}

Taking into consideration the result above, we can go further and consider the pattern groups over the algebraic closure of the field. 
We will denote by $\overline{\FF_p}$ the algebraic closure of the finite field $\FF_p$, meaning $\overline{\FF_p} = \bigcup_{m\in \mathbb{N}} \FF_{p^m}$. 
Given a closed set $\widehat{I}\subseteq \Delta_n$ with covering relations $I$, and considering the group $U:=UT_{\widehat{I}}(n,\overline{\FF_p})$, we can define the the incident system $\UU\TT_{\widehat{I}}(n,\overline{p}):=(U,(U_{(i,j)})_{(i,j)\in I})$, with parabolic subgroups
$$U_{(i,j)}:=\langle e_{k,l}(\alpha)\mid \alpha\in \overline{\FF_p},(k,l)\in I\setminus \{(i,j\}\rangle$$
for each $(i,j)\in I$. 
Since the Frobenius morphism acts on $UT_{\widehat{I}}(n,\overline{\FF_p})$, it also acts on the elements of the incident system $\UU\TT_{\widehat{I}}(n,\overline{p})$. From the previous theorem, one can easily deduce the following.

\begin{coro}
    Let $\Gamma=\UU\TT_{\widehat{I}} (n,\overline{p})$ and $F$ the Frobenius morphism of $\overline{\FF_p}$. Then, for all $m>0$ we have $\Gamma^{F^m}\simeq \UU\TT_{\widehat{I}} (n,p^m)$.
\end{coro}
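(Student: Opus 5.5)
We will rerun the argument of the preceding theorem with $\overline{\FF_p}$ in place of $\FF_q$ and $F^m$ in place of the Frobenius of $\FF_q$. Write $\overline{U}:=UT_{\widehat{I}}(n,\overline{\FF_p})$ and $\overline{U}_{(i,j)}$ for its maximal parabolics. The plan has four steps, taken in this order.

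\textbf{Step 1: $F^m$ acts as a type-preserving automorphism.} Since $F^m(e_{k,l}(\alpha))=e_{k,l}(\alpha^{p^m})$, the map $F^m$ sends each generator of $\overline{U}_{(i,j)}$ to another generator. Hence $F^m(\overline{U}_{(i,j)})=\overline{U}_{(i,j)}$, and $F^m$ acts on $\Gamma$ by $g\overline{U}_{(i,j)}\mapsto F^m(g)\overline{U}_{(i,j)}$. This action preserves types and incidences.

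\textbf{Step 2: identify the fixed subgroups.} The fixed field of $F^m$ in $\overline{\FF_p}$ is $\FF_{p^m}$. Entry-wise, this gives $\overline{U}^{F^m}=UT_{\widehat{I}}(n,\FF_{p^m})$. By Lemma~\ref{lem:patter_U_K}, $\overline{U}_{(i,j)}$ is the pattern group on the closed set $\widehat{I\setminus\{(i,j)\}}$; that lemma was proved over $\FF_q$, but the same linear-algebra argument works over $\overline{\FF_p}$. Therefore $\overline{U}_{(i,j)}^{F^m}=U_{(i,j)}(p^m)$, the corresponding parabolic of $\UU\TT_{\widehat{I}}(n,p^m)$.

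\textbf{Step 3: the fixed cosets are exactly those with rational representatives.} Suppose $F^m(g\overline{U}_{(i,j)})=g\overline{U}_{(i,j)}$. Then $g^{-1}F^m(g)\in\overline{U}_{(i,j)}$. By Lang's theorem there is $h\in\overline{U}_{(i,j)}$ with $h^{-1}F^m(h)=g^{-1}F^m(g)$. The element $k=gh^{-1}$ is then $F^m$-fixed, so it represents the same coset and lies in $UT_{\widehat{I}}(n,\FF_{p^m})$.

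For each such coset we need $(k\overline{U}_{(i,j)})^{F^m}=kU_{(i,j)}(p^m)$. Take $ky$ fixed by $F^m$ with $y\in\overline{U}_{(i,j)}$. Then $kF^m(y)=ky$, so $y$ is fixed and $y\in U_{(i,j)}(p^m)$. Consequently $k\overline{U}_{(i,j)}\mapsto kU_{(i,j)}(p^m)$ is a well-defined, type-preserving bijection onto the elements of $\UU\TT_{\widehat{I}}(n,p^m)$.

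\textbf{Step 4: incidence.} If $g_1U_{(i,j)}(p^m)\cap g_2U_{(s,t)}(p^m)\neq\emptyset$, the larger cosets clearly meet. For the converse, take $g_1,g_2\in UT_{\widehat{I}}(n,\FF_{p^m})$ and suppose $g_1\overline{U}_{(i,j)}\cap g_2\overline{U}_{(s,t)}\neq\emptyset$. This intersection is a coset $h(\overline{U}_{(i,j)}\cap\overline{U}_{(s,t)})$ and is $F^m$-stable.

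Lang's theorem applies again, now to the intersection, which is a connected unipotent group: it is the pattern group on $\widehat{I_{\{(i,j),(s,t)\}}}$, an affine space. So the stable coset contains an $F^m$-fixed point. By Step 3, that point lies in $g_1U_{(i,j)}(p^m)\cap g_2U_{(s,t)}(p^m)$, and the incidence relations agree.

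\textbf{Main obstacle.} The step needing care is the use of Lang's theorem. It requires the parabolics and their pairwise intersections to be connected algebraic groups over $\overline{\FF_p}$ on which $F^m$ is a Frobenius endomorphism. I would justify this by noting that each one is a pattern group, hence isomorphic as a variety to affine space via its unique ordered-product normal form. This is also exactly where passing to the algebraic closure is essential.
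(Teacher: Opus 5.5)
Your proposal is correct and follows essentially the paper's route. The paper merely asserts that the corollary follows from the preceding theorem, and you carry out that theorem's argument with $\overline{\FF_p}$ and $F^m$ in place of $\FF_q$ and $F$: Lang's theorem to obtain $F^m$-fixed representatives, the identification $(k\overline{U}_{(i,j)})^{F^m}=kU_{(i,j)}(p^m)$, and the comparison of incidences. Your explicit use of Lang's theorem on the connected pattern group $\overline{U}_{(i,j)}\cap\overline{U}_{(s,t)}$ in the incidence step makes precise a point the paper leaves implicit, and over $\overline{\FF_p}$ Lang's theorem applies in its standard form.
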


Indeed, with these results, we are able to characterise the coset geometry associated with a pattern group over the algebraic closure of the field.

\begin{theorem}\label{thm:CG_of_Pattern_Fecho}
    Consider a closed set $\widehat{I}\subseteq \Delta_n$, with covering relations $I$, and let $p$ be a prime.
    Let $U:=UT_{\widehat{I}}(n,\overline{\FF_p})$ be the pattern group associated with $\widehat{I}$, and consider the coset incidence system $\UU\TT_{\widehat{I}}(n,\overline{p})=(U,(U_{(i,j)})_{(i,j)\in I})$ where $U_{(i,j)} := \langle e_{k,l}(\alpha) \mid \alpha\in \overline{\FF_p}, (k,l)\in I\setminus\{(i,j)\} \rangle$. Then,
    \begin{enumerate}
        \item $U$ acts flag-transitively on $\UU\TT_{\widehat{I}}(n,\overline{p})$;
        \item $\UU\TT_{\widehat{I}}(n,\overline{p})$ is a residually connected coset geometry;
        \item $\UU\TT_{\widehat{I}}(n,\overline{p})$ is firm;
    \end{enumerate}
\end{theorem}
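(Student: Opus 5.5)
The plan is to reduce everything to the finite case $\UU\TT_{\widehat{I}}(n,p^m)$ of Theorem~\ref{thm:CG_of_Pattern}. The reduction rests on the union $\overline{\FF_p}=\bigcup_m \FF_{p^m}$, where we may restrict to the cofinal chain $m=k!$ so that the fields are nested. This gives $U=\bigcup_m UT_{\widehat{I}}(n,\FF_{p^m})$. Every parabolic subgroup is then the corresponding union: $U_K=\bigcup_m U_K(p^m)$, where $U_K(p^m)$ denotes the parabolic of $\UU\TT_{\widehat{I}}(n,p^m)$ with field $\FF_{p^m}$.

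The first step is to establish this description of the parabolics. I would redo Lemma~\ref{lem:patter_U_K} over $\overline{\FF_p}$. The group $U_{(i,j)}$ is generated by $e_{k,l}(\alpha)$ with $(k,l)\in I\setminus\{(i,j)\}$, and by Proposition~\ref{prop:gens_props}(3) it contains $e_{k,l}(\alpha)$ for all $(k,l)\in\widehat{I\setminus\{(i,j)\}}$ and all $\alpha\in\overline{\FF_p}$. Hence $U_{(i,j)}=UT_{\widehat{I\setminus\{(i,j)\}}}(n,\overline{\FF_p})$. Matrices in this group have only finitely many entries, so this set equals $\bigcup_m U_{(i,j)}(p^m)$.

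Intersections are computed entrywise: a matrix lies in $U_K$ if and only if its entries vanish outside $\widehat{I_K}$. Therefore $U_K=UT_{\widehat{I_K}}(n,\overline{\FF_p})=\bigcup_m U_K(p^m)$. One checks that $U_K(p^m)=U_K\cap UT_{\widehat{I}}(n,\FF_{p^m})$. This is the key compatibility, and I expect it to be the main point needing care. It is exactly where the generating set $T$ (a basis) is replaced by all of $\overline{\FF_p}$; no finite basis exists here, but generating by all field elements gives the same subgroups in the finite case.

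(1) Flag-transitivity follows from Theorem~\ref{thm:cosetFT}, which requires $(U_J\cap U_K)(U_J\cap U_L)\supseteq U_J\cap U_KU_L$. Take $u=ab$ with $u\in U_J$, $a\in U_K$ and $b\in U_L$. The three matrices $u,a,b$ have finitely many entries, so all three lie in $UT_{\widehat{I}}(n,\FF_{p^m})$ for some $m$. Then $u\in U_J(p^m)\cap U_K(p^m)U_L(p^m)$. Proposition~\ref{prop:Pattern_FT} for $q=p^m$ gives $u\in (U_J(p^m)\cap U_K(p^m))(U_J(p^m)\cap U_L(p^m))$, which lies inside the required product. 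Alternatively, the linear-ordering proof of Proposition~\ref{prop:Pattern_FT} transfers verbatim, since unique factorization along an ordering holds over any field.

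(2) For residual connectedness, it suffices by Theorem~\ref{thm:CosetRC} and (1) to show $U_K=\langle U_{K\cup\{(a,b)\}},U_{K\cup\{(c,d)\}}\rangle$. Any $u\in U_K$ lies in some $U_K(p^m)$. Proposition~\ref{prop:pattern_RC1} writes $u$ as a word in $U_{K\cup\{(a,b)\}}(p^m)$ and $U_{K\cup\{(c,d)\}}(p^m)$, and these are subgroups of the infinite-field versions. The reverse inclusion is trivial.

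(3) For firmness, by Theorem~\ref{thm:CosetFIRM} we need $U^{(i,j)}\neq B$. The element $e_{i,j}(1)$ lies in $U^{(i,j)}$, since $(i,j)\in\widehat{I_{I\setminus\{(i,j)\}}}$. It does not lie in $B=UT_{\widehat{I_I}}(n,\overline{\FF_p})$, because a covering relation is never in $\widehat{I\setminus\{(i,j)\}}$ and hence $(i,j)\notin\widehat{I_I}$.
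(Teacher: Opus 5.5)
Your proof is correct and follows the paper's overall strategy of pushing each property down to $\UU\TT_{\widehat{I}}(n,p^m)$ via $\overline{\FF_p}=\bigcup_m\FF_{p^m}$. Restricting to the cofinal chain $m=k!$ makes explicit the directedness the paper uses tacitly. For residual connectedness the two arguments differ only in which clause of Theorem~\ref{thm:CosetRC} is checked: the paper writes $U_J=\langle U_J(p^m)\mid m\in\mathbb{N}\rangle$ and uses clause (3), while you use clause (2) through Proposition~\ref{prop:pattern_RC1}.

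The genuine difference is in flag-transitivity. The paper moves an actual flag $\{g_{i,j}U_{(i,j)}\}$. It chooses $q$ containing the entries of the representatives and asserts that two such cosets meet over $\overline{\FF_p}$ if and only if they meet over $\FF_q$. That is a descent statement: it needs a Lang-type argument as in the preceding fixed-point theorem, or can be sidestepped by enlarging $q$ to contain witnesses of the pairwise intersections. Your verification of the criterion in Theorem~\ref{thm:cosetFT} needs no descent at all. The single triple $u,a,b$ lives over some $\FF_{p^m}$, Proposition~\ref{prop:Pattern_FT} factors $u$ there, and you only use the inclusions $U_K(p^m)\subseteq U_K$.

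Likewise, your identification $U_K=UT_{\widehat{I_K}}(n,\overline{\FF_p})$ reduces firmness to the explicit observation $(i,j)\in\widehat{I_{I\setminus\{(i,j)\}}}\setminus\widehat{I_I}$. The paper only says that a similar argument applies.
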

\begin{proof}

    Let us first prove that $U$ acts flag-transitively on $\UU\TT_{\widehat{I}}(n,\overline{p})$. Consider a flag $\{g_{i,j}U_{(i,j)}\}_{(i,j)\in J}$ of type $J\subseteq I$ with $g_{i,j}\in  UT_{\widehat{I}}(n,\overline{\FF_p})$.
    Since $\overline{\FF_p}=\bigcup_{m\in\mathbb{N}}\FF_{p^m}$, there exists a $q=p^m$ be such that $g_{i,j}\in  UT_{\widehat{I}}(n,\FF_q)$ for all $(i,j)\in J$. Since $g_1U_{(i,j)}(\overline{\FF_p})\cap g_2U_{(s,t)}(\overline{\FF_p})\neq \emptyset$ if and only if $g_1U_{(i,j)}(q)\cap g_2U_{(s,t)}(q)\neq \emptyset$, we have that $\{g_{i,j}U_{(i,j)}(q)\}_{(i,j)\in J}$ is also a flag in $\UU\TT_{\widehat{I}}(n,q)$. 
    By Theorem~\ref{thm:CG_of_Pattern} we have that $UT_{\widehat{I}}(n,\FF_q)$ acts flag-transitively in $\UU\TT_{\widehat{I}}(n,q)$, hence there exists a $h\in UT_{\widehat{I}}(n,\FF_q)$ such that for all $(i,j)\in J$ we have $hg_{i,j}U_{i,j}(q)=U_{i,j}(q)$. This means that we also have $hg_{i,j}U_{i,j}(\overline{\FF_p})=U_{i,j}(\overline{\FF_p})$ for all $(i,j)\in J$. Thus, $U$ acts flag-transitively on $\UU\TT_{\widehat{I}}(n,\overline{p})$.

    For the remaining properties, notice that, for any $(i,j) \in I$, $U_{(i,j)}= \langle U_{(i,j)}(p^m) \mid m \in \mathbb{N}\rangle$. Indeed, for $J\subseteq I$,  is not hard to prove that $U_J = \langle U_J(p^m) \mid m\in \mathbb{N} \rangle$. From the residual-connectedness of each $\UU\TT(n,p^m)$, and Theorem~\ref{thm:CosetRC}(3), we can easily prove the $\UU\TT(n,\overline{p})$ is residually connected.
    To prove firmness, one can follow a similar argument as above.

\end{proof}
\bibliographystyle{ieeetr}
\bibliography{References}

\end{document}